\newtheorem{theorem}{Theorem}[section]
\newtheorem{definition}[theorem]{Definition}
\newtheorem{lemma}[theorem]{Lemma}
\newtheorem{corollary}[theorem]{Corollary}
\newtheorem{remark}[theorem]{Remark}
\newtheorem{proposition}[theorem]{Propositon}
\newtheorem{example}[theorem]{Example}
\def\r#1{\uppercase\expandafter{\romannumeral#1}}
\numberwithin{equation}{section}
\def\a{\alpha}
\def\b{\beta}
\def\Qp{\mathbb{Q}_{p}}
\def\Zp{\mathbb{Z}_{p}}
\def\Z{\mathbb{Z}}
\def\N{\mathbb{N}}
\def\P{\mathbb{P}^{1}(\mathbb{Q}_p)}
\def\Q{\mathbb{Q}}
\def\K{\mathbb K}
\def\D{\mathbb{D}}
\def\CD{\overline{\mathbb{D}}}
\def\SP{\mathbb{S}}
\def\O{{\mathcal O}_K}
\def\PK{\mathbb{P}^{1}(K)}
\def\U{\mathbb U}
\def\V{\mathbb V}
\def\a{\alpha}
\def\b{\beta}
\def\val{v_{\pi}}
\begin{document}

\title[On minimal decomposition of $p$-adic homographic dynamical systems]{On minimal decomposition of $p$-adic homographic dynamical systems }

\author{Aihua Fan}
\address{LAMFA, UMR 7352, CNRS,
Universit\'e de Picardie Jules Verne, 33, Rue Saint Leu, 80039
Amiens Cedex 1, France}
\email{ai-hua.fan@u-picardie.fr}

\author{Shilei Fan}
\address{Institute of Applied Mathematics, AMSS,
Chinese Academy of Sciences, 100190 Beijing, China.}
\email{fanshilei@amss.ac.cn}

\author{Lingmin Liao}
\address{LAMA, UMR 8050, CNRS,
Universit\'e Paris-Est Cr\'eteil, 61 Avenue du
G\'en\'eral de Gaulle, 94010 Cr\'eteil Cedex, France}
\email{lingmin.liao@u-pec.fr}

\author{Yuefei Wang}
\address{Institute of  Mathematics, AMSS,
Chinese Academy of Sciences, 100190 Beijing, China.}
\email{wangyf@math.ac.cn}

\maketitle

\begin{abstract}
A homographic map in the field of $p$-adic numbers $\mathbb{Q}_p$
is studied as a dynamical system on $\mathbb{P}^{1}(\mathbb{Q}_p)$, the projective line over $\mathbb{Q}_p$.
If such a system admits one or
two fixed points in $\mathbb{Q}_p$, then it is conjugate to an affine dynamics whose dynamical structure
has been investigated by Fan and Fares \cite{Fan-Fares11}.
In this paper, we shall mainly solve the remaining case that  the system admits no fixed point. We shall prove that this system can be decomposed into a finite number of minimal subsystems which are topologically conjugate to each other.  All the minimal subsystems are exhibited and the unique invariant measure
for each minimal subsystem is determined.\\
\textbf{Keywords}: $p$-adic dynamical system,  minimal component, homographic map, invariant measure.\\
\textbf{2010 MSC}: 37P05,37A35,37B05.
\end{abstract}

\section{Introduction}
Let $\Qp$ be the field of $p$-adic  numbers. Throughout this paper, we let $\phi$ be a homographic map on $\Qp$ of the form
\begin{equation}\label{frac}
    \phi(x)=\frac{ax+b}{cx+d}~~~~~(a,b,c,d \in \Qp,~ad-bc\neq 0).
\end{equation}
We shall  consider $\phi$ as a one-to-one map on $\P$,  the
projective line over $\Qp$, and study the so-called homographic dynamical system
$(\P,\phi)$.
%Without loss of generality, we assume that
%$$\max\{|a|_{p},|b|_{p},|c|_{p},|d|_{p}\}=1.$$

Such homographic dynamics systems on the usual field $\mathbb{C}$ of
complex numbers have relatively simple behavior. The reason is that
$\mathbb{C}$ is an algebraically closed field. But it is not the
case for the field $\mathbb{Q}_p$.

 Our main result shows that the dynamical system $(\P,\phi)$ is decomposed into
minimal subsystems. This decomposition essentially depends upon the
fact that there is one fixed point, two fixed points or no fixed
point for $\phi: \mathbb{P}^{1}(\Q_p)\to\mathbb{P}^{1}(\Q_p)$. First observe that if $c=0$, then
$\phi$ is an affine map,  of which the dynamical structure on $\Qp$ was  studied by Fan and Fares
\cite{Fan-Fares11}. So we shall assume that $c\neq 0$.  Then $\infty$ is never a fixed point and the number of fixed points of $\phi$ on $\mathbb{P}^{1}(\Q_p)$ is the same to that on $\Q_p$.  As
we shall see, if $\phi: \P \to \P$ admits one or two fixed points,
then $(\P,\phi)$ is topologically conjugate to an affine dynamics.
So we are mainly concerned with the case where $\phi$ admits no
fixed point in $\Qp$. In this case we shall consider the quadratic extension $K$ of the field $\Qp$ which contains the solutions
of $\phi(x)=x$. We first study the dynamics $(\mathbb{P}^1(K),\phi)$ then its restriction on $\mathbb{P}^1(\Qp)$.

The present study contributes to algebraic dynamical systems. See
\cite{Anashin94,Anashin98,Anashin-Uniformly-distributed02,Anashin06,Benedetto-Hyperbolic-maps,Benedetto-Components-periodic-points,
Benedetto-wandering-domain-polynomial,Hsia-periodic-points-closure,Rivera-Letelier05,Rivera-Letelier-Dynamique-rationnelles-corps-locaux,Rivera-Letelier-Espace-hyperbolique} for recent developments. See also the monographs \cite{Anashin-Khrennikov-AAD,Baker-Rumely-book,KhrennikovNilson04book,Silverman-dynamics-book} and their bibliographies therein.
The first work on affine dynamics seems to be that of Oselies and Zieschang \cite{OZ75}. They considered
the continuous automorphisms of the ring of $p$-adic integers $\Zp$ viewed
as an additive group, which are multiplication transformations $M_{a}(x) = ax$ with $a \in \Zp^{\times}$, a unit in $\Zp$, and they constructed an ergodic decomposition of
$M_{a} : \Zp^{\times}\mapsto \Zp^{\times}$, which consists of the cosets of the smallest closed subgroup
containing $a$ of the unit group $\Zp^{\times}$. These multiplication transformations were
also studied by Coelho and Parry \cite{CP11Ergodic} in order to study the distribution of
Fibonacci numbers. A full study on the ergodic decomposition of affine maps
of integral coefficients was realized by Fan, Li, Yao and Zhou \cite{FLYZ07}.  The ergodic decomposition of affine maps (with coefficients in $\Qp$) on $\Qp$ was studied by Fan and Fares \cite{Fan-Fares11}.
A general discussion was given to isometries on a valuation domain by Chabert, Fan and Fares \cite{CFF09}.
Polynomials with coefficients in $\Zp$ were studied as dynamical systems on $\Zp$ by Fan and Liao \cite{FanLiao11}.  The polynomial dynamics on a finite extension of $\Qp$ have been investigated by Fan and Liao \cite{Liao}.  Diao and Silva studied rational maps \cite{Diao-Silva}. It was shown (\cite{Diao-Silva}, Proposition 2) that rational maps are not minimal
on the whole space $\Qp.$  But in this paper we will show that there exist homographic maps (rational maps of degree one)  which are minimal on the projective line $\P$. One important reason that we can sometimes obtain the minimality on the whole space $\P$ is that the infinity point is included and considered (see Section \ref{detail}).
We remark that the minimality is equivalent to the (unique) ergodicity with respect to a (the) natural measure (see Section \ref{inv}). In the case of $\Q_p$, the natural measure is the Haar measure. While in the case of $\P$, the natural measures  are in general not the Haar measure but absolutely continuous with respect to the Haar measure, and they are determined in  Section \ref{inv}. Some other studies on the rational maps can be found in \cite{ARS13,DKM07,KM06,MR04}, in which the dynamical properties of the fixed points in $\mathbb{C}_p$ or in the adelic space are investigated, but the dynamical structure on the whole space remains unclear. In this paper, we can give a vivid picture of the dynamics of the homographic maps on whole space $\P$.

As indicated above, the dynamics of $\phi$ depends on the number of its fixed
points which are the solutions of the following equation
\begin{equation}\label{fixeq}
    \frac{ax+b}{cx+d}=x.
\end{equation}
The equation is  actually a quadratic equation $cx^2+(d-a)x-b=0$ with its
discriminant $$\Delta=(d-a)^2+4bc.$$ We distinguish three cases:
 \begin{itemize}
 \item[] {\em Case I.  $\Delta=0$}. Then $\phi$ has only one fixed point in
 $\Qp$ and
  $\phi$ is conjugate to a translation $\psi(x)=x+\alpha$ for some $\alpha\in\Qp$.
  The minimal decomposition of $\phi$ is deduced from that of  $x\mapsto x+\alpha$
  which is known in \cite{Fan-Fares11}.
 \item[] {\em Case II. $\Delta\neq 0$ and $\sqrt{\Delta}\in \Qp$}. Then
   $\phi$ has two fixed points in $\Qp$  and  $\phi$ is conjugate to a multiplication
    $x \mapsto \beta x$ for some $\beta\in\Qp$.
    The minimal decomposition of $\phi$ is deduced from that of  $x\mapsto \beta x$
  which is also known in \cite{Fan-Fares11}.
 \item[] {\em Case III.  $\Delta\neq 0$ and $\sqrt{\Delta} \notin \Qp$}.
 Then $\phi$ has no fixed point in $\Qp$.
 But $\phi$ has two fixed points in the quadratic extension $\Qp(\sqrt{\Delta})$ of $\Qp$.
\end{itemize}
Our main work is the study of the Case III (see Section \ref{detail}
for details). For $x\in \mathbb{P}^{1}(\Qp)$,  the orbit of $x$ under $\phi$ is
defined by
$$\mathcal{O}_{\phi}(x):=\{\phi^{n}(x):n\geq 0\}.$$
If $E$ is a compact $\phi$-invariant subset (i.e. $\phi(E)\subset E$),
then $(E, \phi)$ is a subsystem of $(\P,\phi)$.  The minimality of $(E, \phi)$ means that
 $E$ is equal to the closure $\overline{\mathcal{O}_{\phi}(x)}$ for
each $x\in E$.

\begin{theorem}\label{main1}
Suppose that $\phi$ has no fixed point in
$\mathbb{P}^{1}(\mathbb{Q}_p)$ and $\phi^n\neq id$ for all integers
$n>0$. Then the system $(\mathbb{P}^{1}(\mathbb{Q}_p),\phi)$ is
decomposed into a finite number of minimal subsystems. These minimal
subsystems are topologically conjugate to each other. The number of
 minimal subsystems is determined by the number
$$\lambda:=\frac{a+d+\sqrt{\Delta}}{a+d-\sqrt{\Delta}}.$$
\end{theorem}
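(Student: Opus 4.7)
The plan is to transport the dynamics to the quadratic extension $K=\Qp(\sqrt{\Delta})$, where $\phi$ becomes a multiplication map, and then exploit the compact group structure on the norm-one subgroup of $K^\times$.

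First I would fix the two fixed points $\alpha,\bar{\alpha}\in K$ of $\phi$ (Galois conjugates) and conjugate $\phi$ by the M\"obius transformation $h(x)=(x-\alpha)/(x-\bar{\alpha})$. A direct computation gives $h\circ\phi\circ h^{-1}(y)=\lambda y$ with $\lambda=(a+d+\sqrt{\Delta})/(a+d-\sqrt{\Delta})$, so $(\PK,\phi)$ is topologically conjugate over $K$ to the multiplication $y\mapsto\lambda y$. Applying the nontrivial Galois automorphism $\sig$ of $K/\Qp$ to $h(x)$ for $x\in\Qp$ gives $\sig(h(x))=1/h(x)$; consequently $h$ sends $\P$ bijectively onto the compact norm-one subgroup $K^{1}:=\{y\in K^\times:N_{K/\Qp}(y)=1\}$. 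In the same way $\sig(\lambda)=1/\lambda$, so $\lambda\in K^{1}$ and the reduced dynamics preserves $K^{1}$.

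Next I would analyse the translation dynamics on $K^{1}$. Since the valuation on $K$ is Galois-invariant and $N(\lambda)=1$ forces $2v_K(\lambda)=0$, every element of $K^{1}$ is a unit and $K^{1}\subset\O^\times$. Local-field theory gives that $\O^\times$ is a $2$-dimensional $\Zp$-analytic Lie group and that the norm map $N:\O^\times\to\Zp^\times$ has surjective Lie-algebra differential (the trace form), so $K^{1}=\ker N$ is a compact $1$-dimensional $\Zp$-Lie group, topologically isomorphic to a finite torsion subgroup times $\Zp$. Because $\phi^n\neq\mathrm{id}$ is equivalent to $\lambda^n\neq 1$, $\lambda$ is not a root of unity and $\langle\lambda\rangle$ is infinite. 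In a $1$-dimensional compact $\Zp$-Lie group every closed subgroup is finite or open, so $H:=\overline{\langle\lambda\rangle}$ is open in $K^{1}$ and has finite index.

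With $H$ identified, the minimal decomposition is immediate: translations on compact abelian groups split into cosets of the closed orbit subgroup, and in a compact group $\overline{\{\lambda^n:n\ge 0\}}=H$, so the forward orbit closure through $y$ is exactly $Hy$. Hence $(K^{1},y\mapsto\lambda y)$ decomposes into the $[K^{1}:H]$ minimal components $gH$, and pulling back through $h$ produces the asserted decomposition of $(\P,\phi)$ into $[K^{1}:H]$ minimal subsystems, a number that manifestly depends only on $\lambda$. For the topological conjugacy between two components $g_1H$ and $g_2H$, I would use the translation $T_g:y\mapsto gy$ of $K^{1}$ with $g=g_2g_1^{-1}$: it maps $g_1H$ onto $g_2H$ and commutes with multiplication by $\lambda$, so its transport $h^{-1}\circ T_g\circ h$ is a homeomorphism of $\P$ intertwining the two restrictions of $\phi$.

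The main obstacle, I expect, is establishing that $H$ is open in $K^{1}$, i.e., that $K^{1}$ is one-dimensional and that the non-torsion element $\lambda$ generates a dense subgroup of an open subgroup. This requires a careful separate treatment of the ramified and unramified cases of $K/\Qp$ using the logarithm on higher principal units, and it is this same analysis that should yield the explicit formula for $[K^{1}:H]$ in terms of $v_\pi(\lambda-1)$ and the residue-field size, a quantitative content that the present statement leaves implicit.
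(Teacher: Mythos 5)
Your proof is correct, and it takes a genuinely different route from the paper. The paper conjugates to the multiplication $\psi(y)=\lambda y$ via $g(x)=\frac{x-x_2}{x-x_1}$ just as you do, but then proceeds purely combinatorially: it only records that $g(\mathbb{P}^1(\Q_p))\subset\mathbb{S}(0,1)$ (Lemma 5.4), and it determines the minimal components by a level-by-level count of how many $\pi$-adic disks meet $g(\mathbb{P}^1(\Q_p))$, combined with the cycle growth/splitting machinery of Desjardins--Zieve and Fan--Liao, carried out in five separate cases ($p\ge3$ unramified, $p\ge3$ ramified, and three cases for $p=2$ according to which of the seven quadratic extensions occurs). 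Your key structural observation --- that the Galois relation $\sigma(g(x))=1/g(x)$ identifies $g(\mathbb{P}^1(\Q_p))$ with the full norm-one subgroup $K^1=\ker N_{K/\Q_p}$ and $\lambda\in K^1$ --- is stronger and cleaner than what the paper records, and it reduces the problem at once to translation dynamics on a compact abelian group of the form (finite)$\times\Z_p$, where the orbit closure of a non-torsion element is an open subgroup $H$ and the minimal sets are exactly the cosets of $H$. This gives the qualitative conclusion of Theorem 1.1 (finite decomposition, mutual conjugacy by translations, dependence only on $\lambda$ since $K=\Q_p(\lambda)$) in a uniform way without any case analysis. What the paper's hands-on route buys in exchange is the quantitative content proved in Theorems 5.4--5.8: explicit formulas for $[K^1:H]$ in terms of $\ell$ and $v_\pi(\lambda^\ell-1)$ and the identification of each minimal piece with a specific odometer $\Z_{(p_s)}$, which you would indeed have to recover from the higher-unit filtration of $K^1$ as you note at the end.
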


The details of the decomposition in Theorem \ref{main1} are described in Theorems \ref{mindecunrami}-\ref{rami-decomposition-p>2} and Theorems \ref{decomp-p=2-unrami}-\ref{decompositionsqrt3}. Notice that for $p=3$, we distinguish the unramified quadratic extention from the ramified ones. While for $p=2$, we need distinguish three cases: $\Q_2(\Delta)=\Q_2(\sqrt{-3})$; $\Q_2(\Delta)=\Q_2(\sqrt{\pm 2}), \Q_p(\sqrt{\pm 6})$; and $\Q_2(\Delta)=\Q_2(\sqrt{-1}), \Q_2(\sqrt{3})$.
%What kind of set can be a minimal component of a homographic system?

 We also prove that the minimal subsystems are
conjugate to adding machine on  an odometer.  Let $(p_s)_{s\geq 1}$ be a sequence of
positive integers such that $p_s|p_{s+1}$ for every $s\geq 1$. We
denote by $\mathbb{Z}_{(p_s)}$ the inverse limit of
$\mathbb{Z}/p_{s}\mathbb{Z}$, which is called an odometer. The map
$x\mapsto x+1$ is called the \emph{adding machine} on
$\mathbb{Z}_{(p_s)}$.

\begin{theorem} Under the same assumption of Theorem \ref{main1}, the
minimal subsystems of $\phi$ are topologically conjugate to the
adding machine on the odometer $\mathbb{Z}_{(p_s)}$, where
$$(p_s)=(k,kp,kp^2,\cdots)$$  for some $k|(p+1)$.
\end{theorem}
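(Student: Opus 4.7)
The plan is to read the adding-machine structure directly off the explicit tower of nested $\phi$-invariant clopen partitions supplied by the detailed decomposition theorems (Theorems \ref{mindecunrami}--\ref{decompositionsqrt3}) on each minimal component $E$. Recall the standard criterion: if $(E,\phi)$ is a minimal dynamical system on a compact totally disconnected space, and if there exist clopen partitions $\mathcal{P}_1\succeq\mathcal{P}_2\succeq\cdots$ with $|\mathcal{P}_s|=p_s$, compatibly cyclically permuted by $\phi$ (each piece of $\mathcal{P}_s$ mapped onto the next in a single $p_s$-cycle), whose mesh tends to zero, then the itinerary map $x\mapsto(i_s(x))_s$ is a topological conjugacy between $(E,\phi)$ and the adding machine on $\mathbb{Z}_{(p_s)}$. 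The required partitions are given directly by the decomposition: at level $s$ the $kp^{s-1}$ clopen balls forming the $s$-th refinement of $E$ constitute $\mathcal{P}_s$, and their diameters tend to zero because each refinement step shrinks radii by the factor $|p|_p$.

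The substantive task is then to identify $k$ and to prove $k\mid p+1$. For this I would use the conjugacy of $\phi$ on $\mathbb{P}^1(K)$, via the M\"obius transformation carrying its two Galois-conjugate fixed points to $0$ and $\infty$, with the multiplication $z\mapsto\lambda z$, where $\lambda=(a+d+\sqrt{\Delta})/(a+d-\sqrt{\Delta})$. Writing $\sigma$ for the non-trivial element of $\mathrm{Gal}(K/\Qp)$, a direct computation yields $\sigma(\lambda)\cdot\lambda=1$, so $\lambda$ lies in the norm-one subgroup of $K^{\times}$. By construction of the decomposition, the integer $k$ is precisely the order of the reduction of $\lambda$ modulo the pro-$p$ part of $\mathcal{O}_K^{\times}$, that is, the length of the first-level cycle of the residue-level action of $\phi$.

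The main obstacle, which requires case analysis rather than a single argument, is bounding this residue-level order by $p+1$. In the unramified case the residue field of $\mathcal{O}_K$ is $\mathbb{F}_{p^2}$, whose norm-one subgroup is the kernel of $x\mapsto x^{p+1}$, cyclic of order exactly $p+1$; hence $k\mid p+1$. In the ramified case, which splits further according to $\Delta$ modulo squares (and requires the special sub-analyses for $p=2,3$ alluded to in the statements of Theorems \ref{mindecunrami}--\ref{decompositionsqrt3}), a uniformizer $\pi$ of $K$ satisfies $\sigma(\pi)=-\pi$ and $\pi^2\in p\,\Zp^{\times}$; decomposing $\mathcal{O}_K^{\times}=\mu\times(1+\pi\mathcal{O}_K)$ with $\mu$ the cyclic group of roots of unity of order prime to $p$, the norm-one torsion is computed case by case and in every case turns out to be cyclic of order dividing $p+1$. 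Finally, the refinement from $\mathcal{P}_s$ to $\mathcal{P}_{s+1}$ multiplies the number of pieces by exactly $p$ because every clopen ball in $\mathbb{P}^1(\Qp)$ subdivides into $p$ sub-balls of the next smaller radius, the residue field of $\Qp$ being $\Fp$. Altogether the tower has sizes $(k,kp,kp^2,\ldots)$ with $k\mid p+1$, and the desired adding-machine conjugacy follows.
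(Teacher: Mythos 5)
Your proposal is correct, and it reaches the divisibility $k\mid(p+1)$ by a genuinely different route than the paper. Both arguments take the detailed decomposition theorems as input and obtain the odometer conjugacy from the same standard mechanism: a nested tower of clopen partitions of a minimal component, cyclically permuted at each level (which, as in Lemma \ref{minimal-part-to-whole}, is forced by minimality of the induced finite systems), with mesh tending to zero and with the partition size multiplying by exactly $p$ at each refinement because a ball of $\Q_p$ splits into $p$ sub-balls; the paper handles this by invoking the proof of Theorem 3 of \cite{FanLiao11} inside each of Theorems \ref{mindecunrami}--\ref{decompositionsqrt3}. Where you diverge is the arithmetic input. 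The paper proves $\ell\mid(p+1)$ in the unramified case geometrically: it counts, via the distance computations of Section \ref{distances} and Lemmas \ref{unrami-number}, \ref{unramify1} and \ref{induction}, that only $p+1$ of the $p^2-1$ residue disks of $\mathbb{S}(0,1)$ meet the invariant set $g(\P)$, and in the ramified cases it reads off $\lambda\equiv\pm1\ ({\rm mod}\ \pi)$ directly from $|a+d|_p$ versus $|\sqrt{\Delta}|_p$. You instead observe that $\sigma(\lambda)=\lambda^{-1}$, so $\lambda$ is a norm-one unit; in the unramified case its reduction lies in the kernel of $x\mapsto x^{p+1}$ on $\mathbb{F}_{p^2}^{*}$, giving $\ell\mid(p+1)$ immediately, and in the ramified case $\sigma$ acts trivially on the residue field, giving $\bar{\lambda}^2=1$ and hence $k\in\{1,2\}$. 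This is cleaner and more conceptual for the present theorem, though it does not replace the paper's disk-counting, which is still needed for the finer conclusion (the exact number of minimal components) in Theorems \ref{mindecunrami}--\ref{decompositionsqrt3}. Two small points to tidy up: the identification of $k$ with the order of $\bar{\lambda}$ should be taken at the stable level where all cycles grow (e.g.\ for $p=2$ unramified the stable cycle length is $2\ell$ rather than $\ell$), but this only changes $k$ by a power of $p$ and hence not the isomorphism class of $\mathbb{Z}_{(p_s)}$; and in the ramified case the mesh shrinks by $p^{-1/2}$ per step rather than $|p|_p$, which of course still tends to zero.
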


The paper is organized as follows. In Section \ref{prel}, we give some preliminaries,  including the computation of the distance to
$\mathbb{Q}_p$ from a point in a quadratic extension and the determination of the intersection with  $\mathbb{Q}_p$ of a disk of a quadratic extension.
Section \ref{fixedpoints} is devoted to the minimal decomposition of a homographic map which admits fixed points in $\Q_p$. We discuss in Section \ref{DynExt} the minimal decomposition of multiplications on a finite extension of $\Q_p$ which will be used in Section \ref{detail}. The main part of this paper is Section \ref{detail}, where we give the minimal decomposition for the homographic maps without fixed point in $\Q_p$. In the last section, we determine the unique invariant measure on each minimal subsystem.

\section{Preliminaries}\label{prel}

In this section, we first present some  notation and facts
 concerning the finite extension of $\Q_p$. Then we calculate the distance to $\Q_p$ from a point in a quadratic extension of $\Q_p$. Finally, we discuss the intersection with $\Q_p$ of disks in a quadratic extension of $\Q_p$. The facts presented in this section will be useful for determining the minimal decomposition of a homographic map without fixed point in $\Q_p$.

\subsection{Finite extensions of the field of $p$-adic numbers}\label{finiteextension}
 Let us recall some  notation and facts concerning  the finite extensions of $\Q_p$. Let $K$ be a finite
extension of $\Q_{p}$ and let $d=[K:\Qp]$
denote the dimension of $K$ as a vector space over $\Qp$.   The
extended absolute value on $K$ is still denoted by $|\cdot|_{p}$.
For $x\in K^{*}:=K\setminus \{0\}$, $v_{p}(x):=-\log_{p}(|x|_{p})$
defines the valuation of $x$, with convention $v_{p}(0):=\infty$.
One can show that there exists a unique positive integer $e$ which
is called \emph{ramification index} of $K$ over $\Qp$, such that
$$v_{p}(K^*)=\frac{1}{e}\Z.$$(Sometimes, we write the image  of $K^*$ under $| \cdot |_p$ as $| K^* |_p=p^{{\mathbb{Z}}/{e}}$. )

The extension $K$ over $\Qp$ is said to
be \emph{unramified} if $e=1$, \emph{ramified} if $e>1$ and
\emph{totally ramified} if $e=d$.  An element $\pi\in K$ is called a
\emph{uniformizer} if $v_{p}(\pi)=1/e$. For convenience of notation, we
write $$ v_{\pi}(x):=e\cdot v_{p}(x)$$
 for $x\in K$ so that
$v_\pi(x)\in \mathbb{Z}$.
Let $\mathcal{O}_{K}:=\{x\in
K : |x|_p\leq 1\}$, whose elements are called  \emph{integers} of $K$. Let
$\mathcal{P}_{K}:=\{x\in K: |x|_{p}<1\}$, which is the maximal ideal
of $\mathcal{O}_{K}$. The \emph{residual class field} of $K$ is
$\mathbb{K}:=\mathcal{O}_{K}/\mathcal{P}_{K}$. Then
$\mathbb{K}=\mathbb{F}_{p^f}$,  the finite field of $p^{f}$ elements
where $f=d/e$. Let $C=\{c_{0},c_{1},\cdots,c_{p^f-1}\}$ be a fixed
complete set of representatives of the cosets of $\mathcal{P}_{K}$
in $\mathcal{O}_{K}$. Then every $x\in K$ has a unique $\pi$-adic
expansion of the form
\begin{equation}\label{piadic}
   x=\sum_{i=i_{0}}^{\infty}a_i\pi^{i}
\end{equation}
where $i_{0}\in \Z$ and $a_i\in C$ for all $i\geq i_{0}$.

Let $\mathbb{U}:=\{x\in\mathcal{O}_{K}:|x|_{p}=1\}$ be the group of
units in $\mathcal{O}_{K}$ and $\mathbb{V}:=\{x\in
\mathbb{U}: x^{m}=1 \mbox{~for some~} m\in \mathbb{N}^*\}$ be the set of roots
of unity in $\mathcal{O}_{K}$.

 For $a\in K$ and $r>0$, we define $\mathbb{D}(a,r)$ to be the open disk of radius $r$ centered at $a$, in other words
 $$\mathbb{D}(a,r):=\{x\in K: |x-a|_{p}< r \}.$$
 Similarly, $\overline{\mathbb{D}}(a,r)$ is the closed disk of radius $r$ centered at $a$:
 $$\overline{\mathbb{D}}(a,r):=\{x\in K: |x-a|_{p}\leq r \},$$
 and $\mathbb{S}(a,r)$ is the sphere of radius $r$ centered at $a$:
 $$\mathbb{S}(a,r):=\{x\in K : |x-a|_{p}= r\}.$$
We should remark that $\CD(a,r)=\D(a,r)$ when $r\notin |K^{*}|_{p}$, and this implies $\mathbb{S}(a,r)=\emptyset$.
For $r\in |K^{*}|_{p}$, it is easy to see that $\CD(a,r)=\D(a,rp^{1/e})$.

Let $\mathbb{P}^{1}(K)$ be
the projective line over $K$. Elements of $\mathbb{P}^{1}(K)$ may be
written as $[x,y]$ with $x,y \in K$ not both zero, and with the
equivalence $[x,y]=[cx,cy]$ for $c\in K^*$. The field $K$ is embedded into
$\mathbb{P}^{1}(K)$ by the map $x\mapsto [x,1]$. The couple $[1,0]$ is the
point at infinity. An element
$$\phi=\left(
      \begin{array}{cc}
        a & b \\
        c & d \\
      \end{array}
    \right)
$$
of the projective linear group \[{\rm{PGL}}(2,K)=\{\phi([x,y])=[ax+by,cx+dy]: a,b,c,d\in K \mbox{~and~} ad-bc\neq 0 \}\] defines a map on $\mathbb{P}^1(K)$ by sending $[x,y]$
to $[ax+by,cx+dy]$.
 We usually view $\mathbb{P}^{1}(K)$ as $K\cup \{\infty\}$ and abuse notation by writing
 $$\phi(x)=\frac{ax+b}{cx+d}.$$
 The chordal metric defined on  $\mathbb{P}^{1}(K)$ is analogous to the standard chordal metric on the Riemann sphere. If $P=[x_1,y_1]$ and $Q=[x_2,y_2]$ are two points in $\mathbb{P}^{1}(K)$, we define
 $$\rho(P,Q)=\frac{|x_1y_2-x_2y_1|_p}{\max\{|x_1|_{p},|y_1|_{p}\}\max\{|x_2|_{p},|y_{2}|_{p}\}}$$
 or, viewing $\mathbb{P}^{1}(K)$ as $K\cup\{\infty\}$, for $z_1,z_2 \in K\cup \{\infty\}$
 we define
 $$\rho(z_1,z_2)=\frac{|z_1-z_2|_{p}}{\max\{|z_1|_{p},1\}\max\{|z_2|_{p},1\}}  \qquad\mbox{if~}z_{1},z_{2}\in K,$$
 and
 $$\rho(z,\infty)=\left\{
                    \begin{array}{ll}
                      1, & \mbox{if $|z|_{p}\leq 1$;} \\
                      1/|z|_{p}, & \mbox{if $|z|_{p}> 1$.}
                    \end{array}
                  \right.
 $$

\begin{definition}An open (resp. closed ) $\mathbb{P}^{1}(K)$-disk  is either an open (resp. closed ) disk $\mathbb{D}$ of $K$ or the complement $\mathbb{P}^{1}(K)\setminus \overline{\mathbb{D}}$ of
a closed (resp. open) disk $\mathbb{D}$.
\end{definition}
Remark that an open $\mathbb{P}^{1}(K)$-disk is also a closed $\mathbb{P}^{1}(K)$-disk, and vice versa.
\begin{proposition}\label{disk}
Let $\phi \in {\rm{PGL}}(2,K)$ and let $\mathbb{D}$ be a $\mathbb{P}^{1}(K)$-disk. Then
$\phi(\mathbb{D})$ is also a $\mathbb{P}^{1}(K)$-disk.
\end{proposition}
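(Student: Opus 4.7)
The plan is to reduce the statement to two elementary cases, using the fact that $\mathrm{PGL}(2,K)$ is generated by affine transformations and the inversion $i(x)=1/x$. When $c=0$, the map $\phi$ is itself affine; when $c\neq 0$, one writes
\[
\phi(x) \;=\; \frac{a}{c} \;+\; \frac{bc-ad}{c^{2}}\cdot\frac{1}{x+d/c},
\]
which exhibits $\phi$ as a composition of affine maps with $i$. Since the property ``sends $\mathbb{P}^{1}(K)$-disks to $\mathbb{P}^{1}(K)$-disks'' is stable under composition, it is enough to check the claim for these two types of generators.

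For an affine map $x\mapsto \alpha x+\beta$ with $\alpha\in K^{*}$, the ultrametric inequality shows at once that any $K$-disk (open or closed) is sent bijectively to a $K$-disk of the same type; since the map fixes $\infty$, it also sends complements of $K$-disks to complements of $K$-disks, hence sends $\mathbb{P}^{1}(K)$-disks to $\mathbb{P}^{1}(K)$-disks.

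For the inversion $i$, which is an involution interchanging $0$ and $\infty$, I would argue by cases on a $K$-disk $\mathbb{D}$. If $0\notin\mathbb{D}$, the isosceles principle implies that all elements of $\mathbb{D}$ share a common absolute value $\rho>0$, and the identity $|1/x-1/y|_{p}=|x-y|_{p}/\rho^{2}$ immediately yields that $i(\mathbb{D})$ is again a $K$-disk (of the same open/closed type, with radius rescaled by $\rho^{-2}$). If $0\in\mathbb{D}$, then $\mathbb{D}$ is centered at $0$, and $i(\mathbb{D})$ consists of $\infty$ together with the complement in $K$ of a $K$-disk centered at $0$, which is by definition a $\mathbb{P}^{1}(K)$-disk. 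The remaining case of a $\mathbb{P}^{1}(K)$-disk of the form $\mathbb{P}^{1}(K)\setminus\overline{\mathbb{D}}$ (containing $\infty$) reduces to the previous cases by invoking the involutivity of $i$ together with the observation, stated just after the definition, that every open $\mathbb{P}^{1}(K)$-disk is also a closed one.

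The main obstacle is not conceptual but bookkeeping: one has to keep careful track of whether a disk is open or closed, whether its radius lies in $|K^{*}|_{p}$, and whether it contains $0$ and/or $\infty$, in order to identify the precise image disk under $i$. Once the cases are split cleanly, each verification is an immediate consequence of the ultrametric inequality.
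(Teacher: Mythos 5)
Your proposal follows essentially the same route as the paper: decompose $\phi$ into affine maps and the inversion $x\mapsto 1/x$, then check each generator, splitting the inversion case according to whether $0$ lies in the disk and handling complements via the bijectivity (or, equivalently in your version, the involutivity) of the map. The only cosmetic difference is that the paper disposes of the complement case up front with the identity $\phi(\mathbb{P}^{1}(K)\setminus X)=\mathbb{P}^{1}(K)\setminus\phi(X)$, whereas you fold it into the inversion case at the end; the substance is identical.
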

\begin{proof}
Note that each $\phi\in {\rm{PGL}}(2,K)$ is a composition of transformations of the form $x\mapsto \alpha x, x\mapsto x+\beta$ or $x\mapsto 1/x$.
So it suffices to prove our conclusion for the above three transformations.

Observe that $\phi$ is a bijection from $\mathbb{P}^{1}(K)$ to itself, we have $\phi(\PK\setminus X)=\PK\setminus \phi(X)$ for any $X\subset \PK$.
 So it  suffices to obtain our conclusion for $\mathbb{D}=\mathbb{D}(a,r)$ which is a disk of $K$.
\begin{itemize}
  \item [(1)] If $\phi(x)=\alpha x$, then $ |\phi(x)-\phi(y)|_{p}=|\alpha|_p|(x-y)|_{p},~\forall x,y\in \mathbb{D}$.  So that  $$\phi(\mathbb{D}(a,r))=\mathbb{D}(a\alpha, r|\alpha|_{p}).$$
  \item [(2)] If $\phi(x)= x+\beta$, then $ |\phi(x)-\phi(y)|_{p}=|x-y|_p,~\forall x,y\in \mathbb{D}$. Thus, $$\phi(\mathbb{D}(a,r))=\mathbb{D}( a+\beta, r).$$
  \item [(3)] If $\phi(x)=1/x$, we distinguish two cases :\\
  (i) If $0\in \mathbb{D}(a,r)$, then $\D(a,r)=\D(0,r)$.   Thus $$\phi(\mathbb{D}(a,r))=\mathbb{P}^{1}(K)\setminus \overline{\mathbb{D}}(0,1/r ).$$ \\
  (ii) If $0\notin \mathbb{D}(a,r)$, then $x\in \D(a,r)$ implies that $|x|_{p}=|a|_{p}$. So
  $$|\phi(x)-\phi(y)|_{p}=\frac{|x-y|_{p}}{|xy|_{p}}=\frac{|x-y|_{p}}{|a|^{2}_{p}}~~~\forall x,y\in \mathbb{D}.$$
  Thus, $\phi(\mathbb{D}(a,r))=\mathbb{D}(a^{-1}, r|a|_{p}^{-2}).$
\end{itemize}
\end{proof}

\subsection{Square roots and quadratic extensions of $\Q_p$}\label{extensions}
We first recall the conditions under which a number in $\Q_p$ has a square root in $\Q_p$, then we present all possible quadratic extensions of $\Q_p$.

An integer $a\in \Z$ is called a\emph{ quadratic residue modulo} $p$ if the equation $x^{2}\equiv a ~(\!\!\!\!\mod p)$ has a solution
$x\in \Z$. The following lemma characterizes those $p$-adic integers which admit a square root in $\Qp$.
\begin{lemma}[\cite{Mahler81}]\label{solution} Let $a$ be a nonzero $p$-adic number with its p-adic expansion
$$ a=p^{v_{p}(a)}(a_0+a_{1}p+a_{2}p^{2}+\cdots)$$
where $1\le a_0\le p-1$ and $0\leq a_j \leq p-1 \ (j\geq 1)$ . The equation
$x^2=a$ has a solution $x\in \Qp$ if and only if
 the following conditions are satisfied
\, \indent \ \
  \item[{\rm (i)}] $v_{p}(a)$ is even;
 \, \indent \ \
  \item[{\rm (ii)}] $a_0$ is quadratic residue modulo $p$ if $p\neq 2$; or $a_1=a_2=0$ if $p=2$.
\end{lemma}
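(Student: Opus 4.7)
The plan is to prove necessity of each condition by direct valuation or reduction arguments, then prove sufficiency via Hensel's lemma (using the strong form when $p=2$).

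First I would reduce to the case where $a$ is a unit. The necessity of $(\mathrm{i})$ is immediate: if $x^2=a$ with $x\in\Qp^\times$, then $2v_p(x)=v_p(a)$, so $v_p(a)$ must be even. Writing $v_p(a)=2k$ and setting $u=p^{-2k}a$, the equation $x^2=a$ is equivalent to $y^2=u$ with $y=p^{-k}x$. Thus for the remainder of the argument it suffices to treat units $u=a_0+a_1p+a_2p^2+\cdots\in\Zp^\times$.

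For $p\neq 2$, necessity of $(\mathrm{ii})$ follows by reduction modulo $p$: a solution $y\in\Zp^\times$ of $y^2=u$ reduces to a solution of $\bar y^2\equiv a_0\pmod p$, so $a_0$ must be a nonzero quadratic residue. For sufficiency, pick $y_0\in\{1,\dots,p-1\}$ with $y_0^2\equiv a_0\pmod p$ and consider $f(X)=X^2-u$. Then $f(y_0)\equiv 0\pmod p$ while $f'(y_0)=2y_0\not\equiv 0\pmod p$ (here we use $p\neq 2$). The classical Hensel lemma then lifts $y_0$ to a root $y\in\Zp$ of $f$, giving $y^2=u$.

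For $p=2$, the argument above breaks because $f'(y_0)=2y_0$ has positive valuation, so standard Hensel does not apply; this is the main obstacle. I would handle it via the strong (refined) Hensel lemma, which requires only $|f(y_0)|_2<|f'(y_0)|_2^2$. Concretely, I would show that the conditions $a_0=1$ and $a_1=a_2=0$ are equivalent to $u\equiv 1\pmod 8$. For necessity, observe that for any $y\in\Z_2^\times$ one has $y\equiv \pm 1\pmod 4$, hence $y^2\equiv 1\pmod 8$; so a square unit must satisfy $u\equiv 1\pmod 8$, i.e.\ $a_0=1$ and $a_1=a_2=0$. For sufficiency, assume $u\equiv 1\pmod 8$ and take $y_0=1$; then $|f(1)|_2=|1-u|_2\leq 2^{-3}$ while $|f'(1)|_2^2=|2|_2^2=2^{-2}$, so the strong Hensel lemma produces a root $y\in\Z_2$ of $f$. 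Combining all three steps yields the stated equivalence, with the only subtle point being the correct invocation of the strong Hensel criterion in the dyadic case.
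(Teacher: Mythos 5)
Your proof is correct. The paper does not supply its own argument for this lemma: it simply cites Mahler's book, so there is no in-paper proof to compare against. Your argument — reduce to units via the parity of $v_p(a)$, settle $p\neq 2$ by reduction mod $p$ plus the ordinary Hensel lemma, and settle $p=2$ by observing that the stated digit conditions are exactly $u\equiv 1\pmod 8$ (necessity from $(2m+1)^2=4m(m+1)+1\equiv 1\pmod 8$, sufficiency from the strengthened Hensel criterion $|f(1)|_2<|f'(1)|_2^2$ applied to $f(X)=X^2-u$) — is the standard textbook proof and is essentially what Mahler's cited source does. One small remark: for $p=2$ the lemma's normalization $1\le a_0\le p-1$ forces $a_0=1$ automatically, so the only substantive content of condition (ii) in that case is $a_1=a_2=0$; your identification with $u\equiv 1\pmod 8$ is therefore exactly on the mark.
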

For the case where $\sqrt{\Delta}\notin \Qp$, we need to study the affine systems on some  quadratic extension of $\Qp$.
Two distinct elements $a$ and $a^{\prime}$ of $\Qp$, neither of which is $0$ or the square of a $p$-adic number, evidently produce the same extension field
$$\Qp(\sqrt{a})=\Qp(\sqrt{a^{\prime}})$$
if and only if the quotient $a/a^{\prime}$ is the square of a $p$-adic number. Actually, there are $7$ distinct quadratic extensions of $\Q_{2}$, and for $p\geq 3$
there are $3$ distinct extensions of $\Qp$ as the following lemma shows.

\begin{lemma}[\cite{Mahler81}]
There are exactly $7$ distinct quadratic extensions of $\Q_2$.
They are represented respectively by
$$\Q_2(\sqrt{-1}),\Q_2(\sqrt{2}),\Q_2(\sqrt{-2}),\Q_2(\sqrt{3}),\Q_2(\sqrt{-3}),\Q_2(\sqrt{-6}),\Q_2(\sqrt{6}).$$
If $p\geq 3$, then $\Q_p$ has exactly $3$ distinct quadratic
extensions: %, and these may be represented by
$$\Qp(\sqrt{N_p}),\Qp(\sqrt{p}),\Qp(\sqrt{pN_p}),$$
where $N_p<p$ is the smallest positive integer which is not a quadratic residue modulo $p$.
\end{lemma}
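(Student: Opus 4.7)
The plan is to reduce the problem to computing the finite abelian group $\Qp^*/(\Qp^*)^2$. Any quadratic extension of $\Qp$ has the form $\Qp(\sqrt{d})$ for some non-square $d\in\Qp^*$, and, as recalled in the paragraph above the statement, $\Qp(\sqrt{d})=\Qp(\sqrt{d'})$ precisely when $d/d'$ is a square. Hence the quadratic extensions of $\Qp$ are in bijection with the nontrivial classes of $\Qp^*/(\Qp^*)^2$. Using the splitting $\Qp^*\cong\Z\times\Zp^*$ via $a\mapsto (v_p(a),\,p^{-v_p(a)}a)$, one obtains
$$
\Qp^*/(\Qp^*)^2\;\cong\;\Z/2\Z\,\times\,\Zp^*/(\Zp^*)^2,
$$
so the task reduces to computing the unit factor, and here Lemma~\ref{solution} is tailor-made.

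For $p\ge 3$, I would apply Lemma~\ref{solution}(ii): a unit $u\in\Zp^*$ with leading $p$-adic digit $a_0$ is a square iff $a_0$ is a quadratic residue modulo $p$. Since the residues form an index-$2$ subgroup of $\Fp^*$, this gives $|\Zp^*/(\Zp^*)^2|=2$ with nontrivial representative $N_p$, hence $|\Qp^*/(\Qp^*)^2|=4$ with nontrivial representatives $N_p,\,p,\,pN_p$, yielding the three listed extensions.

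For $p=2$, Lemma~\ref{solution}(ii) says that $u=1+a_1\cdot 2+a_2\cdot 4+\cdots\in\Z_2^*$ is a square iff $a_1=a_2=0$, so the subgroup of squares in $\Z_2^*$ equals $1+8\Z_2$, and $\Z_2^*/(1+8\Z_2)\cong(\Z/8\Z)^*$ has order $4$. I would take $\{1,-1,3,-3\}$ as unit representatives, their residues modulo $8$ being the four distinct values $1,7,3,5$; multiplying each by $2$ produces the four odd-valuation representatives $\pm 2,\pm 6$. Thus $|\Q_2^*/(\Q_2^*)^2|=8$ and the seven nontrivial classes are represented by $-1,\pm 2,\pm 3,\pm 6$, matching the seven extensions stated. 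The most tedious step will be verifying pairwise inequivalence of these eight representatives: for every pair one must check that their ratio either has odd $2$-adic valuation or has unit part not congruent to $1\pmod 8$, a finite, direct application of Lemma~\ref{solution}. Everything else is a structural statement about $\Qp^*$.
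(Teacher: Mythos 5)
The paper offers no proof of this lemma; it is quoted from Mahler's book, so there is nothing internal to compare against. Your argument is correct and is the standard one: reducing to the group $\Qp^*/(\Qp^*)^2$ via the criterion stated just above the lemma (that $\Qp(\sqrt{a})=\Qp(\sqrt{a'})$ iff $a/a'$ is a square), splitting off the valuation to get $\Z/2\Z\times\Zp^*/(\Zp^*)^2$, and then using Lemma~\ref{solution} to see that the unit factor has order $2$ for $p\ge 3$ (squares are detected by the leading digit being a quadratic residue) and order $4$ for $p=2$ (squares are exactly $1+8\Z_2$). The representative sets $\{N_p,p,pN_p\}$ and $\{-1,\pm2,\pm3,\pm6\}$ you exhibit are indeed pairwise inequivalent by the same criterion, and the only implicit step worth flagging is that every quadratic extension of $\Qp$ is of the form $\Qp(\sqrt{d})$, which is immediate in characteristic zero.
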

The following lemma shows that each field $\Qp$ admits exactly one unramified quadratic extension.
\begin{lemma}[\cite{Mahler81}]\label{unramifiedextension}
For the field $\Q_{2}$ , there is  exactly one unramified quadratic extension $\Q_{2}(\sqrt{-3})$.
For the field $\Q_{p}$  where $p\geq 3$, there is also exactly one unramified quadratic extension $\Q_{p}(\sqrt{N_{p}})$.
\end{lemma}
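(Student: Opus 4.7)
The plan is to exploit the standard factorization $d = ef$ relating degree, ramification index $e$, and residue degree $f$ of a finite extension of $\Q_p$. Since the quadratic extensions have $d=2$, being unramified means $e=1$, $f=2$, and being ramified means $e=2$, $f=1$. For uniqueness, I would invoke the fundamental fact that unramified extensions of $\Q_p$ of degree $n$ are in one-to-one correspondence with extensions of the residue field $\mathbb{F}_p$ of degree $n$ (via the reduction map, with the correspondence given by Hensel lifting). Because $\mathbb{F}_p$ admits a unique extension $\mathbb{F}_{p^2}$ of degree $2$, there is at most one unramified quadratic extension of $\Q_p$. This disposes of uniqueness in both statements.

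For existence in the case $p \geq 3$, I would verify that $\Q_p(\sqrt{N_p})$ is the unramified extension. Since $N_p$ is by definition a non-residue modulo $p$, the polynomial $x^2 - N_p$ is irreducible in $\mathbb{F}_p[x]$, so its reduction remains irreducible. Consequently the minimal polynomial of $\sqrt{N_p}$ over $\Z_p$ is Eisenstein-free and its residue is irreducible of degree $2$, forcing $f \geq 2$ and hence $f=2$, $e=1$. In contrast, $\sqrt{p}$ and $\sqrt{pN_p}$ satisfy $v_p(\sqrt{p}) = v_p(\sqrt{pN_p}) = 1/2 \notin \Z$, which forces $e \geq 2$, so $\Q_p(\sqrt{p})$ and $\Q_p(\sqrt{pN_p})$ are (totally) ramified.

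For $p=2$, the decisive step is to identify the unramified extension among the seven candidates. I would introduce the element $\omega := \tfrac{-1 + \sqrt{-3}}{2} \in \Q_2(\sqrt{-3})$, which is an algebraic integer satisfying $\omega^2 + \omega + 1 = 0$. Since $x^2 + x + 1$ is irreducible over $\mathbb{F}_2$, its reduction $\bar\omega$ generates $\mathbb{F}_4$ inside the residue field of $\Q_2(\sqrt{-3})$, so $f \geq 2$ and hence the extension is unramified. For the remaining six extensions I would exhibit a uniformizer of valuation $1/2$: for $\sqrt{\pm 2}$ and $\sqrt{\pm 6}$ the element $\sqrt{\pm 2}$ or $\sqrt{\pm 6}$ itself works since its square has $v_2 = 1$; for $\Q_2(\sqrt{-1})$ the element $1 - i$ (where $i^2=-1$) satisfies $(1-i)(1+i) = 2$, so $v_2(1-i) = 1/2$; and for $\Q_2(\sqrt{3})$ the element $1 - \sqrt{3}$ satisfies $(1-\sqrt{3})(1+\sqrt{3}) = -2$, giving valuation $1/2$. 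In each case $e \geq 2$, so the extension is ramified.

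The main obstacle is the $p=2$ case, where one cannot directly apply Hensel's lemma to $x^2 - a$ for a unit $a$ (since the derivative $2x$ vanishes in characteristic $2$), and so the "non-residue" criterion that worked for odd $p$ fails. The resolution is to replace $\sqrt{-3}$ by the better-behaved generator $\omega$, whose defining polynomial $x^2 + x + 1$ has nonzero derivative modulo $2$ and therefore detects the residue field extension cleanly; this is really the arithmetic reason that $-3$ (and not $-1$ or $3$) gives the unramified extension in residue characteristic $2$.
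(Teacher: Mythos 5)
Your proof is correct and complete. The paper itself offers no proof of this lemma; it is stated as a citation to Mahler's book, so there is nothing in the text to compare against. Your self-contained argument is the standard one from local field theory: the relation $d = ef$ pins down the two possibilities for a quadratic extension, the bijection between unramified extensions of $\Q_p$ and finite extensions of $\mathbb{F}_p$ gives uniqueness at once, and existence is settled by exhibiting a generator whose residue has degree $2$ over $\mathbb{F}_p$. For $p \geq 3$ the unit $\sqrt{N_p}$ does this directly since $x^2 - N_p$ stays irreducible mod $p$; for $p = 2$ your replacement of $\sqrt{-3}$ by $\omega = \tfrac{-1+\sqrt{-3}}{2}$, a root of $x^2 + x + 1$, is exactly the right move, sidestepping the failure of Hensel lifting for $x^2 - a$ in residue characteristic $2$ and cleanly exposing $\mathbb{F}_4$ inside the residue field. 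The remaining six quadratic extensions of $\Q_2$ are disposed of by producing an element of valuation $1/2$ in each (the square root itself for $\sqrt{\pm 2}, \sqrt{\pm 6}$; the elements $1-i$ and $1-\sqrt 3$, whose norms are $\pm 2$, for $\Q_2(\sqrt{-1})$ and $\Q_2(\sqrt 3)$), forcing $e = 2$. Two small points worth tidying if this were to be written out fully: the phrase ``Eisenstein-free'' in the $p \geq 3$ case is not standard terminology and could be replaced by simply observing that $\sqrt{N_p}$ is a unit whose residue generates a degree-$2$ extension of $\mathbb{F}_p$; and when using $(1-i)(1+i)=2$ and $(1-\sqrt 3)(1+\sqrt 3)=-2$ to conclude $v_2 = 1/2$, one should make explicit that the two factors are Galois conjugates and hence have equal absolute value. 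Neither affects correctness. Relative to the paper's approach (citation), yours buys a reader a verification from first principles at modest length; the paper's buys brevity by deferring to a standard reference.
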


\subsection{Distance $d(x,\Qp)$ from a point in a quadratic extension to $\Q_p$}\label{distances}
Let $K$ be a quadratic extension of $\Qp$. For $x\in K$ and $S\subset K$, denote by
$$d(x,S)=\inf_{y\in S}|x-y|_{p}$$
the distance from $x$ to $S$. In the following, we will compute the distance from a point $x\in K$ to the set $\Q_p$.
\begin{proposition}\label{prop3}
Let $x_{1},x_{2}\in\Qp$ be two nonzero $p$-adic numbers. If $\sqrt{\frac{x_{1}}{x_{2}}}\in \Qp$,
then $$d(\sqrt{x_{1}},\Qp)=\left|\sqrt{\frac{x_{1}}{x_{2}}}\right|_{p}\cdot d(\sqrt{x_{2}},\Qp).$$
\end{proposition}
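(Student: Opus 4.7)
The plan is to reduce this to a simple change of variable in the infimum defining the distance. Set $\alpha := \sqrt{x_1/x_2}$, which by hypothesis lies in $\Q_p$ and is nonzero. Fixing compatible choices of square roots, we have $\sqrt{x_1} = \alpha\,\sqrt{x_2}$ in the quadratic extension $K$.

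Next, I would unfold the definition of the distance and factor out $\alpha$:
\begin{equation*}
d(\sqrt{x_1},\Q_p) \;=\; \inf_{y\in\Q_p}\bigl|\alpha\sqrt{x_2} - y\bigr|_p.
\end{equation*}
The key observation is that multiplication by $\alpha$ is a bijection of $\Q_p$ onto itself (since $\alpha\in\Q_p^\ast$), so the substitution $y = \alpha z$ with $z$ ranging over $\Q_p$ traces out all of $\Q_p$ without repetition. By multiplicativity of $|\cdot|_p$ this gives
\begin{equation*}
\inf_{y\in\Q_p}\bigl|\alpha\sqrt{x_2}-y\bigr|_p
\;=\; \inf_{z\in\Q_p}\bigl|\alpha\bigr|_p\cdot\bigl|\sqrt{x_2}-z\bigr|_p
\;=\; |\alpha|_p\cdot d(\sqrt{x_2},\Q_p),
\end{equation*}
and substituting back $|\alpha|_p = |\sqrt{x_1/x_2}|_p$ yields the claimed equality.

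There is essentially no obstacle here; the only minor subtlety is the initial sign choice for the square roots, which is harmless because the two possible choices of $\sqrt{x_1}$ differ by a factor of $-1\in\Q_p$, so $d(\sqrt{x_1},\Q_p)$ is independent of that choice (and likewise for $\sqrt{x_2}$), and one can therefore always align the choices so that $\sqrt{x_1}=\alpha\sqrt{x_2}$ holds in $K$. Apart from this bookkeeping, the proof is a one-line scaling argument.
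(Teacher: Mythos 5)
Your proof is correct and follows essentially the same route as the paper's: both write $\sqrt{x_1}=\sqrt{x_1/x_2}\cdot\sqrt{x_2}$, factor the unit $\sqrt{x_1/x_2}\in\Q_p^\ast$ out of the infimum, and use that multiplication by this element permutes $\Q_p$. Your extra remark on the sign convention for the square roots is harmless bookkeeping that the paper leaves implicit.
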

\begin{proof}
Let $t=\frac{x_1}{x_2}$. We have
$$d(\sqrt{x_{1}},\Qp)=\inf_{y\in \Qp}|\sqrt{tx_{2}}-y|_{p}=|\sqrt{t}|_{p}\cdot(\inf_{y\in \Qp}|\sqrt{x_{2}}-\frac{y}{\sqrt{t}}|_{p}).$$
Since $\sqrt{t}\in \Qp$, the last infimum  is equal to $d(\sqrt{x_{2}},\Qp)$.
\end{proof}
\begin{proposition}\label{distanceNp}
Let $p\geq 3 $ be a prime number.
We have $$d(\sqrt{N_{p}},\Qp)=1,~d(\sqrt{pN_{p}},\Qp)=d(\sqrt{p},\Qp)=p^{-1/2}.$$
\end{proposition}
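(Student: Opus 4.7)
The plan is to verify the three distance equalities separately, matching each to the obvious upper bound by taking $y=0$ and then ruling out any smaller distance via the ultrametric inequality. Since $N_{p}$ is coprime to $p$, we have $|\sqrt{N_p}|_p=1$, and since $v_p(p)=v_p(pN_p)=1$, the extended valuation gives $|\sqrt{p}|_p=|\sqrt{pN_p}|_p=p^{-1/2}$. Taking $y=0$ therefore immediately yields the upper bounds $d(\sqrt{N_p},\Q_p)\le 1$ and $d(\sqrt{p},\Q_p),\,d(\sqrt{pN_p},\Q_p)\le p^{-1/2}$.

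For the lower bound $d(\sqrt{N_p},\Q_p)\ge 1$, I would argue by contradiction: suppose some $y\in\Q_p$ satisfies $|\sqrt{N_p}-y|_p<1$. The strong triangle inequality forces $|y|_p=|\sqrt{N_p}|_p=1$, so $y\in\Z_p^\times$. Now I would factor
\[
N_p-y^2=(\sqrt{N_p}-y)(\sqrt{N_p}+y),
\]
observe that $|\sqrt{N_p}+y|_p\le \max(|\sqrt{N_p}|_p,|y|_p)=1$, and conclude $|N_p-y^2|_p<1$, i.e., $N_p\equiv y^2\pmod p$. This contradicts the definition of $N_p$ as the smallest positive non-residue modulo $p$, giving the desired lower bound.

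For the lower bounds in the second line, the argument is even cleaner and does not need a quadratic residue obstruction. If $y\in\Q_p$ satisfied $|\sqrt{p}-y|_p<p^{-1/2}=|\sqrt{p}|_p$, the strong triangle inequality would give $|y|_p=p^{-1/2}$; but $|\Q_p^{*}|_p=p^{\Z}$, so $p^{-1/2}\notin |\Q_p^{*}|_p$, a contradiction. The same argument, applied to $\sqrt{pN_p}$ (whose absolute value is also $p^{-1/2}$), establishes $d(\sqrt{pN_p},\Q_p)\ge p^{-1/2}$.

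The only slightly delicate point is the first argument, where one must remember to use the factorization trick to convert the hypothetical close approximation into a mod-$p$ congruence; the other two cases are pure valuation bookkeeping, exploiting the fact that the ramification index of $\Q_p(\sqrt{p})$ and $\Q_p(\sqrt{pN_p})$ over $\Q_p$ is $2$ while $\Q_p$ itself carries only integer valuations.
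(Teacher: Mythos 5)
Your proof is correct and uses essentially the same ideas as the paper's: the upper bounds from $y=0$, the factorization $N_p - y^2 = (\sqrt{N_p}-y)(\sqrt{N_p}+y)$ together with the non-residue property of $N_p$ for the first equality, and the incompatibility of the half-integer valuation $p^{-1/2}$ with $|\Q_p^*|_p = p^{\Z}$ for the other two. The paper phrases the argument as a direct case-by-case computation of $|x-\sqrt{\cdot}|_p$ rather than a proof by contradiction, but the underlying mathematics is identical.
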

\begin{proof}
First the fact $|N_{p}|_{p}=1$ implies $|\sqrt{N_{p}}|_{p}=1$. Let $k\in \{0,1,\cdots,p-1\}$.
Then
\begin{eqnarray}\label{fact1} |\sqrt{N_{p}}\pm k|_{p}\leq 1.\end{eqnarray} Since $N_{p}$ is not a quadratic residue modulo $p$, we have
$$N_{p}-k^{2}\not\equiv 0 \mod p.$$
 It follows that $|N_{p}-k^{2}|_{p}=1$. Then from $N_{p}-k^{2}=(\sqrt{N_{p}}+k)(\sqrt{N_{p}-k})$, and the fact (\ref{fact1}), we deduce $$|\sqrt{N_{p}}+k|_{p}=|\sqrt{N_{p}}-k|_{p}=1 .$$
Hence for $x\in \Qp$, we have
$$|x-\sqrt{N_{p}}|_{p}=\left\{
      \begin{array}{ll}
        |x|_{p}, & \mbox{if $|x|_{p}>1$,} \\
        1, & \mbox{if $|x|_{p}\leq 1$.}
      \end{array}
    \right.$$
So  $d(\sqrt{N_{p}},\Qp)=1$.

Notice that $|\sqrt{p}|_{p}=|\sqrt{pN_{p}}|_{p}=p^{-1/2}$. For $x\in \Qp$, we have
$$|x-\sqrt{p}|_{p}=\left\{
      \begin{array}{ll}
        |x|_{p}, & \mbox{if $|x|_{p}\geq 1$,} \\
        p^{-1/2}, & \mbox{if $|x|_{p}<1$;}
      \end{array}
    \right.$$

$$|x-\sqrt{pN_{p}}|_{p}=\left\{
      \begin{array}{ll}
        |x|_{p}, & \mbox{if $|x|_{p}\geq 1$,} \\
        p^{-1/2}, & \mbox{if $|x|_{p}<1$.}
      \end{array}
    \right.$$
So  $d(\sqrt{pN_{p}},\Qp)=d(\sqrt{p},\Qp)=p^{-1/2}.$
\end{proof}

As a corollary of   Propositions \ref{prop3} and \ref{distanceNp}, we have
\begin{corollary}\label{p>2distance}
Let $p\geq 3$ be a prime number and $x\in \Qp $. If $\sqrt{x}\notin \Qp$, then
$$d(\sqrt{x},\Qp)=|\sqrt{x}|_{p}.$$
\end{corollary}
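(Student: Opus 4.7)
The plan is to deduce this directly from the two preceding propositions together with the classification of quadratic extensions of $\Q_p$ for $p \geq 3$. First I would note that since $\sqrt{x} \notin \Q_p$, the field $\Q_p(\sqrt{x})$ is a genuine quadratic extension of $\Q_p$, and by the classification lemma cited from Mahler, it must coincide with one of the three extensions $\Q_p(\sqrt{N_p})$, $\Q_p(\sqrt{p})$, $\Q_p(\sqrt{pN_p})$. Equivalently, there exists $y \in \{N_p, p, pN_p\}$ such that $\sqrt{x/y} \in \Q_p$, i.e.\ $x$ and $y$ differ by a square in $\Q_p^\times$.

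Next I would apply Proposition \ref{prop3} with $x_1 = x$ and $x_2 = y$ to obtain
$$d(\sqrt{x}, \Q_p) = \left|\sqrt{x/y}\right|_p \cdot d(\sqrt{y}, \Q_p).$$
At this point the problem reduces to a case check on the three representative values of $y$. Proposition \ref{distanceNp} gives $d(\sqrt{N_p}, \Q_p) = 1$ and $d(\sqrt{p}, \Q_p) = d(\sqrt{pN_p}, \Q_p) = p^{-1/2}$. On the other hand $|\sqrt{N_p}|_p = 1$ since $N_p$ is a unit, and $|\sqrt{p}|_p = |\sqrt{pN_p}|_p = p^{-1/2}$. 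So in each of the three cases the identity $d(\sqrt{y}, \Q_p) = |\sqrt{y}|_p$ holds.

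Plugging this back in and using multiplicativity of the absolute value,
$$d(\sqrt{x}, \Q_p) = \left|\sqrt{x/y}\right|_p \cdot |\sqrt{y}|_p = |\sqrt{x}|_p,$$
which is the claim.

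There is no genuine obstacle here: the argument is essentially a bookkeeping step combining the two propositions via the classification of quadratic extensions. The only thing to be careful about is making sure that for every one of the three distinguished representatives $y$ one really has the numerical coincidence $d(\sqrt{y},\Q_p) = |\sqrt{y}|_p$; this is what makes the final conclusion depend only on $|\sqrt{x}|_p$ and not on which extension $x$ lives in.
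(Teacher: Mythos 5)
Your proof is correct and is exactly the route the paper intends: the corollary is stated there without proof as a consequence of Propositions \ref{prop3} and \ref{distanceNp}, combined via the classification of the three quadratic extensions, precisely as you do. The key observation that $d(\sqrt{y},\Q_p)=|\sqrt{y}|_p$ for each representative $y\in\{N_p,p,pN_p\}$ is the right bookkeeping step.
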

\begin{corollary}\label{sqrtx+yp>2}
Let $p\geq 3$ be a prime number and $x\in \Qp $ with $\sqrt{x}\notin \Qp$. If
$|y|_{p}\leq |\sqrt{x}|_{p}$, then
$$|\sqrt{x}-y|_{p}=|\sqrt{x}|_{p}.$$
\end{corollary}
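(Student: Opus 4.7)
The plan is to combine the ultrametric inequality with the distance formula just established in Corollary \ref{p>2distance}. The latter tells us that
$$d(\sqrt{x},\Q_p)=|\sqrt{x}|_p,$$
which instantly provides the lower bound $|\sqrt{x}-y|_p\ge |\sqrt{x}|_p$ for every $y\in \Q_p$. So the whole proof reduces to producing a matching upper bound under the hypothesis $|y|_p\le |\sqrt{x}|_p$.

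For the upper bound I would simply invoke the non-archimedean triangle inequality: $|\sqrt{x}-y|_p\le \max(|\sqrt{x}|_p,|y|_p)=|\sqrt{x}|_p$. Combining this with the lower bound yields the desired equality. If one prefers a case split, the case $|y|_p<|\sqrt{x}|_p$ is the completely routine one where the strong form of the ultrametric inequality already forces $|\sqrt{x}-y|_p=|\sqrt{x}|_p$ directly, with no reference to Corollary \ref{p>2distance} needed.

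The substantive case is therefore $|y|_p=|\sqrt{x}|_p$, where the ultrametric by itself yields only $|\sqrt{x}-y|_p\le |\sqrt{x}|_p$ and the reverse inequality is not automatic. This is the one place where the hypothesis $\sqrt{x}\notin \Q_p$ is genuinely used: it enters through Corollary \ref{p>2distance} to rule out the possibility that $\sqrt{x}$ gets strictly closer to $\Q_p$ than $|\sqrt{x}|_p$. Once one recognizes that this single previously established fact carries the whole load, the corollary is a two-line consequence, and no computation with $p$-adic expansions or with the explicit representatives $\sqrt{N_p}$, $\sqrt{p}$, $\sqrt{pN_p}$ is required.
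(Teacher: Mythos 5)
Your proof is correct and takes essentially the same route as the paper: the upper bound comes from the ultrametric inequality, and the lower bound comes from Corollary \ref{p>2distance} via $|\sqrt{x}-y|_p\ge d(\sqrt{x},\Q_p)=|\sqrt{x}|_p$. The paper phrases the lower bound through the intermediate set $\overline{D}(0,|\sqrt{x}|_p)$, but that is only a cosmetic difference; your direct appeal to $d(\sqrt{x},\Q_p)$ is the same idea.
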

\begin{proof}
Since $|y|_{p}\leq |\sqrt{x}|_{p}$, it follows that
$|\sqrt{x}-y|_{p}\leq |\sqrt{x}|_{p}$.
Also, we have
$$|\sqrt{x}-y|_{p}\geq d(\sqrt{x}, \overline{D}(0,|\sqrt{x}|_{p}))=
d(\sqrt{x}, \Qp)=|\sqrt{x}|_{p}.$$
So
$$|\sqrt{x}-y|_{p}=|\sqrt{x}|_{p}.$$
\end{proof}

%\begin{proof}
%If $\Qp(\sqrt{x})=\Qp(\sqrt{N_{p}})$, then $x=N_{p}t^{2}$ for some $t\in \Qp$.
%
%
%If $\Qp(\sqrt{x})=\Qp(\sqrt{p})$ or $\Qp(\sqrt{pN_{p}})$, then $x=y^{2}p$ or $y^{2}pN_{p}$ for some $y\in \Qp$.
%With out loss of generality, we suppose that $x=y^{2}p$  for some $y\in \Qp$, then
%$$d(\sqrt{x},\Qp)=|y|_{p}\cdot d(\sqrt{p},\Qp)=|y\sqrt{p}|_{p}.$$
%\end{proof}

\begin{proposition}\label{prop4}
For $p=2$, we have \\
\indent \mbox{\rm 1)} \  $d(\sqrt{-3},\Q_{2})=1/2$;\\
\indent \mbox{\rm 2)} \  $d(\sqrt{i},\Q_{2})=\sqrt{2}/2 \ $ for $i=2,-2,6 \mbox{~or~} -6$;\\
\indent \mbox{\rm 3)} \  $d(\sqrt{i},\Q_{2})=\sqrt{2}/2 \ $ for $i=-1\mbox{~or~} 3.$
\end{proposition}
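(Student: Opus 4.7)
The plan is to split the three parts according to whether $|\sqrt{i}|_2$ lies in the value group $|\Q_2^*|_2 = 2^{\mathbb{Z}}$.

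For Part~2) with $i \in \{2, -2, 6, -6\}$, each satisfies $v_2(i) = 1$, so $v_2(\sqrt{i}) = 1/2 \notin \mathbb{Z}$. For any $y \in \Q_2$, the valuations $v_2(\sqrt{i}) = 1/2$ and $v_2(y) \in \mathbb{Z} \cup \{\infty\}$ are distinct, so the ultrametric equality gives $|\sqrt{i} - y|_2 = \max(|\sqrt{i}|_2, |y|_2) \geq |\sqrt{i}|_2 = \sqrt{2}/2$, with equality achieved at $y = 0$. This disposes of Part~2) immediately.

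For Parts~1) and 3) we have $|\sqrt{i}|_2 = 1$. Any $y$ with $|y|_2 > 1$ satisfies $|\sqrt{i} - y|_2 = |y|_2 > 1$, and any $y$ with $|y|_2 \leq 1/2$ (an even $2$-adic integer) gives $|\sqrt{i} - y|_2 = 1$ by the ultrametric property. So the infimum is sought among odd $2$-adic integers $y$. My key tool will be the factorization $(\sqrt{i} - y)(\sqrt{i} + y) = i - y^2$. Setting $u := v_2(\sqrt{i} - y)$ and $v := v_2(\sqrt{i} + y)$, one has $u + v = v_2(i - y^2)$. Since $y^2 \equiv 1 \pmod{8}$ for $y$ odd, a direct computation gives $v_2(i - y^2) = 2$ when $i = -3$ and $v_2(i - y^2) = 1$ when $i \in \{-1, 3\}$.

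Pairing this with the auxiliary identity $(\sqrt{i} - y) - (\sqrt{i} + y) = -2y$, which has valuation $1$ for odd $y$, the ultrametric inequality forces $\min(u, v) = 1$ whenever $u \neq v$. For $i = -3$ the system $u + v = 2$ with $\min(u,v) = 1$ in the case $u \neq v$ already forces $\max(u,v) = 1$, contradicting $u \neq v$; so $u = v = 1$, giving $|\sqrt{-3} - y|_2 = 1/2$. For $i \in \{-1, 3\}$ the constraints $u + v = 1$ and $\min(u,v) = 1$ are outright incompatible, so $u = v = 1/2$, giving $|\sqrt{i} - y|_2 = \sqrt{2}/2$. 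The main subtlety, and the reason Part~1) separates from Part~3) despite both having $|\sqrt{i}|_2 = 1$, is that $\Q_2(\sqrt{-3})$ is the unique unramified quadratic extension (value group $\mathbb{Z}$), producing the tighter distance $1/2$; the ramified extensions $\Q_2(\sqrt{-1})$ and $\Q_2(\sqrt{3})$ have value group $\frac{1}{2}\mathbb{Z}$, so odd $y$ approach $\sqrt{i}$ only to within $\sqrt{2}/2$.
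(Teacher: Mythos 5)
Your proof is correct and takes a genuinely different route from the paper for Parts~1) and~3). The paper's proof computes $(1+2t-\sqrt{-3})^3$ and $(1+2t-\sqrt{i})^2$ explicitly, reading off the valuation of the difference from the valuation of the cube or square; your argument instead exploits the norm-and-trace pair: $u+v=v_2(i-y^2)$ from $(\sqrt{i}-y)(\sqrt{i}+y)=i-y^2$, and $\min(u,v)\le v_2(2y)=1$ with equality when $u\ne v$ from $(\sqrt{i}+y)-(\sqrt{i}-y)=2y$. Together with the elementary observation $y^2\equiv 1 \pmod 8$ for odd $y$, this pins down $u=v$ without any polynomial expansion and handles $i=-3$ and $i\in\{-1,3\}$ by the same mechanism, with the numerical difference ($1/2$ versus $\sqrt{2}/2$) traced cleanly to $v_2(i-y^2)=2$ versus $1$ and hence to unramified versus ramified. (One could shorten it further: $\sqrt{i}-y$ and $-\sqrt{i}-y$ are Galois conjugates in $\Q_2(\sqrt{i})/\Q_2$, so $u=v$ is immediate by Galois-invariance of the extended valuation; your $2y$-difference argument re-derives that fact from scratch, which is fine.) Part~2) is the same in both proofs. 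The reduction to odd $y$, and the verification that the infimum is attained, are both handled, so there is no gap.
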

\begin{proof}
In order to determine $d(\sqrt{i},\Q_2)$, we are going to compute $|x-\sqrt{i}|_{2}$ for all $x\in \Q_{2}$ and for $i=-3,2,-2,6,-6,-1$ or $3$.

\indent \mbox{\rm 1)} \  Assume $i=-3$. We claim that
for all $x\in \Q_{2}$,
\begin{equation}\label{sqrt-3}
|x-\sqrt{-3}|_{2}\geq |1-\sqrt{-3}|_{2}=1/2.
\end{equation}
In fact, since $(1-\sqrt{-3})^{3}=-8$, we get the equality $|1-\sqrt{-3}|_{2}=1/2$ in (\ref{sqrt-3}). To prove the inequality in (\ref{sqrt-3}), we observe that $|\sqrt{-3}|_{2}=1$, which follows from $|-3|_{2}=1$.
 We distinguish three cases.\\
(i) If $|x|_{2}>1$, then $|x-\sqrt{-3}|_{2}=|x|_{2}>1$.\\
(ii) If $|x|_{2}<1$, then $|x-\sqrt{-3}|_{2}=|\sqrt{-3}|_{2}=1$.\\
(iii) If $|x|_{2}=1$, we write $x=1+2t$ for some $t\in \mathbb{Z}_{2}$.  A simple calculation gives
   $$   (1+2t-\sqrt{-3})^{3}= -8+12t(t+1)(1-\sqrt{-3})+8t(t^{2}-3).$$
Since $|t(t+1)|_{2}\leq 1/2$, $ |t(t^{2}-3)|_{2}\leq 1/2$ and $|1-\sqrt{-3}|_{2}=1/2$, we get
 $$|(x-\sqrt{-3})^{3}|_{2}=|(1+2t-\sqrt{-3})^{3}|_{2}=1/8.$$
So  $|x-\sqrt{-3}|_{2}=1/2$.\\
\indent \mbox{\rm 2)} \  Assume $i=2,-2,6$ or $-6$. We have $|\sqrt{i}|_{2}=\sqrt{2}/2$. So
 $$|x-\sqrt{i}|_{2}=\left\{
       \begin{array}{ll}
         \sqrt{2}/2, & \mbox{if $|x|_{2}\leq 1/2$;} \\
         |x|_{2}, & \mbox{if $|x|_{2}\geq 1$.}
       \end{array}
     \right.$$
Thus $d(\sqrt{i},\Q_{2})=\sqrt{2}/2$.\\
\indent \mbox{\rm 3)} \  Assume $i=-1~\mbox{or}~3$. The fact $|i|_{2}=1$ implies $|\sqrt{i}|_{2}=1$.  From the facts
$(1-\sqrt{i})^{2}=1+i -2\sqrt{i}$ and $|1+i|_{2}\leq 1/4$, we get
$$|1-\sqrt{i}|_{2}=\sqrt{2}/2.$$
We claim that $|x-\sqrt{i}|_{2}\geq \sqrt{2}/2$ for all $x\in \Qp$.
We distinguish three cases.\\
(a) If $|x|_{2}>1$, then $|x-\sqrt{i}|_{2}=|x|_{2}>1$.\\
(b) If $|x|_{2}<1$, then $|x-\sqrt{i}|_{2}=|\sqrt{-3}|_{2}=1$.\\
(c) If $|x|_{2}=1$,  we write $x=1+2t$ for some $t\in \mathbb{Z}_{2}$.  Observe that
$$(1+2t-\sqrt{i})^{2}=1+i+4t(t+1)-2(1+2t)\sqrt{i}.$$
From the facts $|1+i|_{2}\leq 1/4$ and $|4t(t+1)|_{2}\leq 1/8$, we get
$$|1+2t-\sqrt{i}|_{2}=\sqrt{2}/2.$$
Therefore $d(\sqrt{i},\Q_{2})=\sqrt{2}/2$.
\end{proof}

%\begin{lemma}\label{distance-s}
%Let $K=\Q_{2}(\sqrt{i}), i=-3,2,-2,6,-6,-1 \mbox{~or~} 3$ be a quadratic extension of $\Q_{2}$.
%If $s=t\cdot \sqrt{i}$ with $t\in \Q_{2}$ , then
%$$d(s,\Q_{2})=|t|_{2}\cdot d(\sqrt{i},\Q_{2}).$$
%\end{lemma}
%\begin{proof}
%By lemma \ref{distance-i}, we know that there exists $x_{0}\in\Q_{2}$ such that
%$$d(\sqrt{i},\Q_{2})=|\sqrt{i}-x_0|_{2}.$$
%We claim that
%$$|s-tx_0|_{2}\leq |s-x|_{2},~~\forall x\in \Q_{p}.$$
%Then
%$$d(s,\Q_{2})=|s-tx_0|_{2}.$$
%Since $s=t\cdot \sqrt{i}$, we know that $|s-tx_0|_{2}=|t|_{2}|\sqrt{i}-x_0|_{2}$.
%So $$d(s,\Q_{2})=|t|_{2}\cdot d(\sqrt{i},\Q_{2}).$$
%
%Now, we prove our claim to finish our proof. It is easy to see that
%$$|s-x|_{2}=|t|_{2}|\sqrt{i}-x/t|,$$
%by lemma \ref{distance-i}, we know that $|\sqrt{i}-x/t|_{2}\geq |\sqrt{i}-x_0|_{2}$.
%So
%$$|s-tx_0|_{2}\leq |s-x|_{2},~~\forall x\in \Q_{p}.$$
%
%\end{proof}
By Propositions \ref{prop3} and \ref{prop4}, we have the following corollary.
\begin{corollary}\label{distancep=2}
Let  $x$ be a number of $\Q_{2}$ such that $\sqrt{x}\not\in \Q_{2}$.
\begin{itemize}
  \item[\rm{(1)}] If $\Q_{2}(\sqrt{x})=\Q_{2}(\sqrt{-3})$, then $$d(\sqrt{x},\Q_2)=\frac{1}{2}|\sqrt{x}|_{2}.$$
  \item[\rm{(2)}] If $\Q_{2}(\sqrt{x})=\Q_{2}(\sqrt{i}),i=2,-2,6,\mbox{~or~}-6$,
  then $$d(\sqrt{x},\Q_2)=|\sqrt{x}|_{2}.$$
  \item[\rm{(3)}] If $\Q_{2}(\sqrt{x})=\Q_{2}(\sqrt{i}),i=-1\mbox{~or~} 3$, then
   $$d(\sqrt{x},\Q_2)=\frac{\sqrt{2}}{2} |\sqrt{x}|_{2}.$$
\end{itemize}
\end{corollary}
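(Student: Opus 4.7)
The plan is to deduce the corollary directly from Propositions \ref{prop3} and \ref{prop4} by choosing, in each case, a convenient $\Q_2$-representative of the quadratic extension $\Q_2(\sqrt{x})$ and using multiplicativity of the distance.

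First I would recall the standard fact that $\Q_2(\sqrt{a}) = \Q_2(\sqrt{b})$ if and only if the quotient $a/b$ is a square in $\Q_2^{*}$, equivalently $\sqrt{a/b}\in\Q_2$. This is already used implicitly in Subsection \ref{extensions}, where the seven classes of quadratic extensions are listed: two elements of $\Q_2^{*}\setminus(\Q_2^{*})^2$ give the same extension exactly when they lie in the same square class. So in each case (1)--(3), writing the relevant representative as $i$, we have $\sqrt{x/i}\in\Q_2$, hence Proposition \ref{prop3} applies with $x_1=x$ and $x_2=i$ to give
\[
d(\sqrt{x},\Q_2) \;=\; \left|\sqrt{x/i}\right|_2\cdot d(\sqrt{i},\Q_2) \;=\; \frac{|\sqrt{x}|_2}{|\sqrt{i}|_2}\cdot d(\sqrt{i},\Q_2).
\]

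Next I would plug in the values computed in Proposition \ref{prop4} together with the elementary facts $|\sqrt{-3}|_2=|\sqrt{-1}|_2=|\sqrt{3}|_2=1$ and $|\sqrt{\pm 2}|_2=|\sqrt{\pm 6}|_2=\sqrt{2}/2$. For case (1), $|\sqrt{-3}|_2=1$ and $d(\sqrt{-3},\Q_2)=1/2$, giving $d(\sqrt{x},\Q_2)=\tfrac12|\sqrt{x}|_2$. For case (2), $|\sqrt{i}|_2=\sqrt{2}/2=d(\sqrt{i},\Q_2)$, so the two factors cancel and yield $d(\sqrt{x},\Q_2)=|\sqrt{x}|_2$. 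For case (3), $|\sqrt{i}|_2=1$ and $d(\sqrt{i},\Q_2)=\sqrt{2}/2$, yielding $d(\sqrt{x},\Q_2)=\tfrac{\sqrt{2}}{2}|\sqrt{x}|_2$.

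There is no real obstacle here: the corollary is essentially a bookkeeping exercise. The only thing to be careful about is the identification of the square class of $x$ with one of the listed representatives $-3,\pm2,\pm6,-1,3$, which is guaranteed by the classification in Subsection \ref{extensions}. Since Proposition \ref{prop3} requires $\sqrt{x/i}\in\Q_2$ (not merely that $x$ and $i$ lie in the same extension), this equivalence between square classes and equal extensions is the single logical point that must be invoked; once it is in place the three formulas follow by one line of substitution each.
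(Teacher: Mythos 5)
Your proposal is correct and follows exactly the route the paper intends: the paper states the corollary ``By Propositions \ref{prop3} and \ref{prop4}'' without further proof, and your argument---identifying the square class of $x$ with a listed representative $i$, applying Proposition \ref{prop3} to get $d(\sqrt{x},\Q_2)=\bigl(|\sqrt{x}|_2/|\sqrt{i}|_2\bigr)\, d(\sqrt{i},\Q_2)$, then substituting the values from Proposition \ref{prop4}---is precisely that deduction spelled out.
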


We can obtain more information from the proof of Proposition \ref{prop4}.
\begin{proposition}\label{x-sqrt}
Let $x\in \Q_2$ with $|x|_{2}=1$. Then we have \\
\indent\rm{(1)} $|x-\sqrt{-3}|_2=1/2$,\\
 \indent\rm{(2)} $|x-\sqrt{-1}|_2=|x-\sqrt{3}|_2=\sqrt{2}/2.$

\end{proposition}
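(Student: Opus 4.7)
The plan is to observe that the desired equalities were already essentially established inside the proof of Proposition \ref{prop4}, in case (iii) for part 1) and case (c) for parts 2). So my approach would be to isolate that computation and present it as a standalone statement.

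Since $|x|_2 = 1$, every such $x$ is an odd $2$-adic integer, so I can write $x = 1 + 2t$ with $t \in \Z_2$. Under this parametrization, the relevant quantities $(x - \sqrt{-3})^3$ and $(x - \sqrt{i})^2$ (for $i = -1, 3$) expand into sums whose $2$-adic sizes can be read off term by term via the strong triangle inequality.

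For (1), I would expand
\[
(1 + 2t - \sqrt{-3})^3 = -8 + 12\,t(t+1)(1 - \sqrt{-3}) + 8\,t(t^2 - 3),
\]
using the identity $(1-\sqrt{-3})^3 = -8$. The first term has $2$-adic absolute value $1/8$. Using $|t(t+1)|_2 \leq 1/2$ (one of $t, t+1$ is even) together with $|1-\sqrt{-3}|_2 = 1/2$, the second summand has absolute value $\leq 1/16$; similarly the third is $\leq 1/16$. By the ultrametric inequality the sum has absolute value exactly $1/8$, yielding $|x - \sqrt{-3}|_2 = 1/2$.

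For (2), with $i = -1$ or $i = 3$, I would square:
\[
(1 + 2t - \sqrt{i})^2 = (1 + i) + 4t(t+1) - 2(1 + 2t)\sqrt{i}.
\]
Here $|1+i|_2 \leq 1/4$ (since $1 + (-1) = 0$ and $|1+3|_2 = 1/4$), $|4t(t+1)|_2 \leq 1/8$, while $|2(1+2t)\sqrt{i}|_2 = 1/2$ because $1+2t$ is a unit and $|\sqrt{i}|_2 = 1$. The dominant term is therefore the last one, of absolute value $1/2$, so $|x - \sqrt{i}|_2^2 = 1/2$, i.e.\ $|x - \sqrt{i}|_2 = \sqrt{2}/2$. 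There is no real obstacle here: the proposition is a bookkeeping extraction from the proof of Proposition \ref{prop4}, and the only thing to be careful about is that both competing estimates (the $\leq$ bounds on the small terms and the exact value on the large term) remain strict enough for the ultrametric equality case to apply.
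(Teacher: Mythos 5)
Your argument is correct and is essentially the computation the paper itself performs: the expansions of $(1+2t-\sqrt{-3})^3$ and $(1+2t-\sqrt{i})^2$ that you use are exactly cases (iii) and (c) in the proof of Proposition \ref{prop4}, and the paper's own proof of Proposition \ref{x-sqrt} merely repackages them by writing $x-\sqrt{i}=(x\mp 1)+(\pm 1-\sqrt{i})$ and applying the ultrametric equality to the precomputed values $|{\pm}1-\sqrt{-3}|_2=1/2$ and $|1-\sqrt{i}|_2=\sqrt{2}/2$. The one micro-step you pass over with ``similarly'' does hold, since $|t(t^2-3)|_2\le 1/2$ whether $t$ is even or odd, so the third summand is indeed bounded by $1/16$ and the ultrametric equality applies as claimed.
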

\begin{proof}
\indent\rm{(1)}  Since $(-1-\sqrt{-3})^{3}=8$ and $(1-\sqrt{-3})^{3}=-8$, $$|-1-\sqrt{-3}|_{2}=|1-\sqrt{-3}|_2=1/2.$$
Since $|x|_{2}=1$, it follows that either $|x-1|_2 < 1/2$ or $|x+1|_2<1/2$. Without loss of generality, we may suppose
$|x+1|_2 < 1/2$.  Then we have
$$|x-\sqrt{-3}|_{2}=|x+1-1-\sqrt{-3}|_{2}=1/2.$$

\indent\rm{(2)} From the proof of Proposition \ref{prop4}, we have $|1-\sqrt{-1}|_2=|1-\sqrt{3}|_{2}=\sqrt{2}/2.$
Since $|x|_{2}=1$, $|x-1|_{2}\leq 1/2$.
So $$|x-\sqrt{-1}|_{2}=|x+1-1-\sqrt{-1}|_{2}=\sqrt{2}/2.$$
Applying the same arguments to $\sqrt{3}$ instead of $\sqrt{-1}$, we obtain the conclusion.

\end{proof}

\subsection{Intersection with $\Q_p$ of disks in a quadratic extension}

By the facts presented in the previous subsections, we can determine the intersection with $\Q_p$ of disks in a quadratic extension of $\Q_p$, which in some sense shows the topological structures of the two different kinds of (ramified and unramified)  quadratic extensions of $\Qp$.

\begin{lemma}\label{unrami-number}
Let $K$ be an unramified quadratic extension of $\Qp$. %Then $|K^{*}|_{p}=p^{\Z}$.
Let $\CD=\CD(a,p^{m})$ be a closed disk of radius $p^{m}$ with $m\in \Z$ such that
$\CD\cap \Qp\neq \emptyset.$
Then $\CD$ consists of $p^{2}$ closed disks of radius $p^{m-1}$, and there are
$p$ such disks which intersect $\Qp$.
\end{lemma}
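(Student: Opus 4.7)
I would split the statement into two claims: that $\CD$ decomposes into $p^{2}$ closed subdisks of radius $p^{m-1}$, and that exactly $p$ of them meet $\Qp$. The first is a purely local-field fact about the unramified extension; the second is a counting argument identifying which residue classes can be realized by $\Qp$-points.

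For the decomposition I would use the fact from Section~\ref{finiteextension} that in the unramified degree-$2$ case the ramification index is $e=1$ and the residue degree is $f=2$, so $\O/p\O\cong \mathbb{F}_{p^{2}}$ has cardinality $p^{2}$. Writing $\CD(a,p^{m})=a+p^{-m}\O$ (which uses $e=1$ to get $|p^{-m}|_p=p^{m}$) and picking a complete system of representatives $c_{1},\dots,c_{p^{2}}\in\O$ of $\O/p\O$, the coset decomposition $\O=\bigsqcup_{i}(c_{i}+p\O)$ transports under translation by $a$ and scaling by $p^{-m}$ to
\[
\CD(a,p^{m})=\bigsqcup_{i=1}^{p^{2}}\CD(a+p^{-m}c_{i},\,p^{m-1}),
\]
which is the first claim. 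For the count, I would use $\CD\cap\Qp\ne\emptyset$ to replace $a$ by a center lying in $\Qp$. Then the $i$-th subdisk meets $\Qp$ iff $d(a+p^{-m}c_{i},\Qp)\le p^{m-1}$; subtracting $a\in\Qp$ and scaling by $p^{m}$, this reduces to $d(c_{i},\Qp)\le p^{-1}$, equivalently, to the existence of $y\in\Qp$ with $c_{i}-y\in p\O$. Since $c_{i}\in\O$, such a $y$ must lie in $\O\cap\Qp=\Zp$, so the subdisk meets $\Qp$ precisely when the class of $c_{i}$ in $\O/p\O\cong \mathbb{F}_{p^{2}}$ is in the image of $\Zp$, namely the prime subfield $\mathbb{F}_{p}\subset\mathbb{F}_{p^{2}}$. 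Exactly $p$ classes satisfy this, giving the claimed $p$ subdisks.

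\textbf{Main obstacle.} No step is technically hard; the substantive content is the identification of $\O/p\O$ with $\mathbb{F}_{p^{2}}$ together with the recognition that the natural map $\Zp\to\O/p\O$ has image the prime subfield $\mathbb{F}_{p}$. This is what produces the ratio $p^{2}:p$ in the statement and is also the precise feature of the unramified situation that breaks in the ramified case (where $|\O/p\O|$ and the image of $\Zp$ must be recomputed from $e$, $f$, and a choice of uniformizer $\pi\ne p$, leading to different counting).
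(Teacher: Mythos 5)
Your proof is correct, but it takes a genuinely different route from the paper's. The paper fixes explicit coordinates: it writes each $z\in\CD$ as $z=x+y\sqrt{N_p}$ (or $z=x+y\cdot\frac{-1+\sqrt{-3}}{2}$ when $p=2$) with $x,y\in\Qp$, expands $x,y$ $p$-adically, indexes the $p^{2}$ subdisks by the pair of leading digits $(x_m,y_m)$, and then invokes the concrete distance computation of Proposition~\ref{distanceNp} (that $d(\sqrt{N_p},\Qp)=1$) to show that exactly the $p$ subdisks with $y_m=0$ meet $\Qp$; it treats $p\geq 3$ and $p=2$ as separate cases because the convenient integral basis differs. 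Your proof replaces all of this with the intrinsic observation that $\O/p\O\cong\mathbb{F}_{p^{2}}$ partitions $\O$ into $p^{2}$ cosets of $p\O$, and that $\O\cap\Qp=\Zp$ maps onto the prime subfield $\mathbb{F}_{p}\subset\mathbb{F}_{p^{2}}$, which immediately gives the $p^{2}\!:\!p$ count. What your approach buys is a uniform argument in $p$ (no case split at $p=2$), no reliance on the paper's explicit distance lemmas, and a clear isolation of the exact algebraic fact — that $\Zp\hookrightarrow\O$ induces $\mathbb{F}_p\hookrightarrow\mathbb{F}_{p^2}$ on residues with $p\O\cap\Zp=p\Zp$ — that makes the unramified case work and fails in the ramified one. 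The paper's approach buys consistency with the rest of Section~\ref{prel}, whose distance lemmas (Propositions~\ref{distanceNp}, \ref{prop4} and Corollaries~\ref{p>2distance}, \ref{distancep=2}) are reused several times later, so proving the lemma through them keeps the machinery uniform across the paper. One minor note: you phrase the intersection criterion via $d(\cdot,\Qp)\leq p^{m-1}$ and then silently pass to the existence of a suitable $y$; this is fine here, but it is cleaner (and avoids any worry about whether the infimum is attained) to state the criterion directly as existence of $y\in\Qp$ with $|a+p^{-m}c_i-y|_p\leq p^{m-1}$, which your subsequent argument in fact uses.
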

\begin{figure}
  \centering
  % Requires \usepackage{graphicx}
  \includegraphics[width=0.6\textwidth]{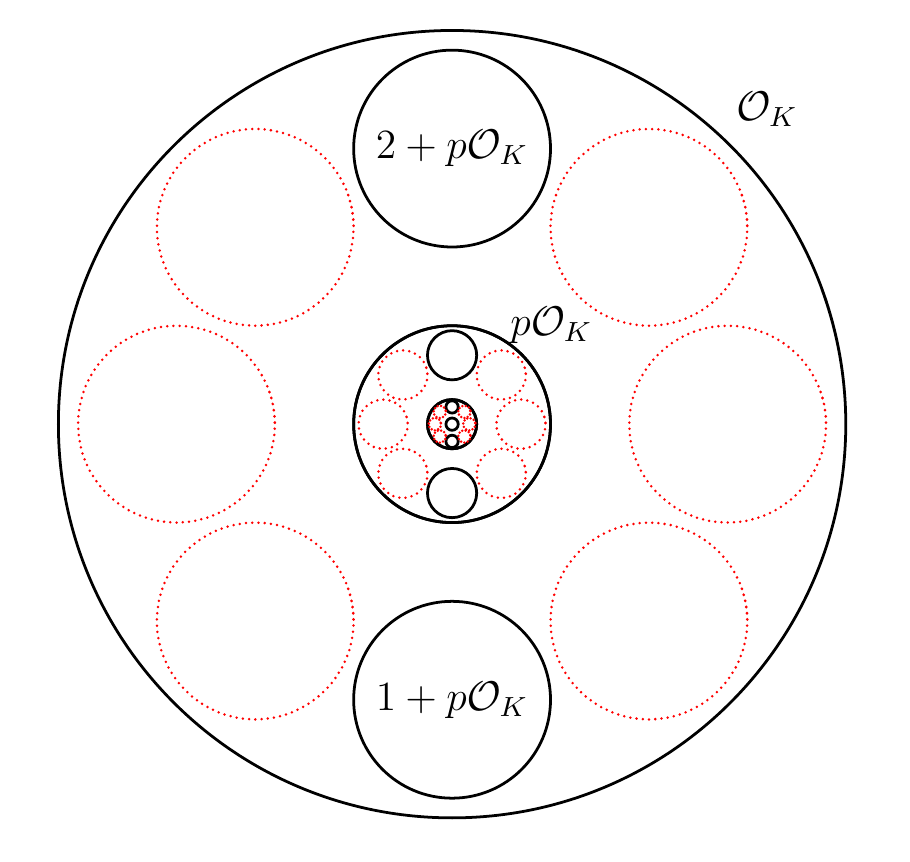}\\
  \caption{The topological structure of the unramified quadratic extension of $\Q_{3}$.}
\label{unraimipic}
\end{figure}
\begin{proof}
Without loss of generality, we may suppose that $0\in \CD$. So $\CD=\CD(0,p^{m})$.
We distinguish the following two cases.\\
\indent {\rm (1)} Assume $p\geq 3$. Then $K=\Qp(\sqrt{N_p})$. Each point $z\in \CD$ can be written as
$z=x+y\sqrt{N_p}$ with $x,y\in D(0,p^m)$ (see Theorem 5 of $\S 6$  in \cite{Mahler81}). Let $x=\sum_{k=-m}^{\infty}x_{k}p^{k}$ and $y=\sum_{k=-m}^{\infty}y_{k}p^{k}$, where $x_{k},y_{k}\in \{0,1,\cdots p-1\}$.
We decompose $\CD$ as $$\CD=\bigsqcup_{x_m,y_m\in \{0,1,\cdots,p-1\}}\CD\big(x_mp^{-m}+y_{m}p^{-m}\sqrt{N_{p}}, \ p^{m-1}\big).$$

If $y_m\neq 0$, by Proposition \ref{distanceNp}, we get $$\CD(x_{m}p^{-m}+y_{m}p^{-m}\sqrt{N_{p}},\ p^{m-1})\cap \Qp=\emptyset.$$
So $\CD(0,p^{m-1}),\CD(p^{-m},p^{m-1}),\cdots,\CD \big((p-1)p^{-m},p^{m-1}\big)$ are the $p$ closed disks of radius $p^{m-1}$, all of  which intersect  $\Qp$.
\\
\indent {\rm (2)} Assume $p=2$. By Lemma \ref{unramify1}, we have $K=\Q_{2}(\sqrt{-3})$.
Each point $z\in\CD$ can be written as  $$z=x+y\frac{-1+\sqrt{-3}}{2}$$ with $x,y\in D(0,p^m)$, (still see Theorem 5 of $\S 6$  in \cite{Mahler81}).
By similar arguments to Case (1), we obtain the result.
\end{proof}
The  topological structure of  the unramified quadratic extensions  of $\Q_{3}$  is depicted in Figure \ref{unraimipic}.
 Here a circle of real line represents a disk which intersects $\Q_{3}$ and a circle of dotted line represents a disk which is disjoint from $\Q_{3}$.

\begin{lemma}\label{rami-number}
Let $K$ be a ramified quadratic extension of $\Qp$. %Then $|K^{*}|_{p}=p^{\Z/2}$.
Let $\CD=\CD(a,p^{m/2})$ be a closed disk of radius $p^{m/2}$ with $m\in \Z$ such that
$\CD\cap \Qp\neq \emptyset.$
Then $\CD$ consists of $p$ closed disks of radius $p^{(m-1)/2}$.
If $m$ is even, then all such disks of radius $p^{(m-1)/2}$ intersect $\Qp$; if $m$ is odd, then there is
only one such disk which intersects $\Qp$.
\end{lemma}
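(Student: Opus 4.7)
The plan is to reduce to the case where the center lies in $\Qp$, decompose the disk using the $\pi$-adic expansion, and then analyze the $\Qp$-intersections purely through valuation considerations. Since $\CD\cap\Qp\neq\emptyset$, we may replace the center $a$ by any element of this intersection and assume $a\in\Qp$. Because $K$ is ramified of degree $2$, the ramification index is $e=2$ and the residual degree is $f=d/e=1$, so the residue field is $\Fp$; the standard $\pi$-adic expansion (as used in the proof of Lemma \ref{unrami-number}) then gives the decomposition of $\CD(a,p^{m/2})$ into exactly $p$ disjoint closed sub-disks of radius $p^{(m-1)/2}$, distinguished by the coefficient of $\pi^{-m}$ in the expansion of $z-a$.

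The next step is to describe $\CD\cap\Qp$ explicitly. For any $x\in\Qp$ one has $|x-a|_p\in p^{\Z}\cup\{0\}$, so
\[
\CD(a,p^{m/2})\cap\Qp=\{x\in\Qp:|x-a|_p\leq p^{\lfloor m/2\rfloor}\},
\]
which is a closed $\Qp$-disk of radius $p^{\lfloor m/2\rfloor}$. The same observation, applied to any sub-disk recentered at one of its $\Qp$-points (if any), shows that each sub-disk meeting $\Qp$ traces out a closed $\Qp$-disk of radius $p^{\lfloor (m-1)/2\rfloor}$, and that distinct $K$-sub-disks give disjoint $\Qp$-traces.

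With these ingredients the two cases follow immediately. If $m=2n$ is even, then $\CD\cap\Qp$ is a $\Qp$-disk of radius $p^n$, while each non-empty $\Qp$-trace of a sub-disk is a $\Qp$-disk of radius $p^{n-1}$; since a $\Qp$-disk of radius $p^n$ decomposes into exactly $p$ disjoint $\Qp$-disks of radius $p^{n-1}$, all $p$ $K$-sub-disks must meet $\Qp$. If $m=2n+1$ is odd, then $\CD\cap\Qp$ is still a $\Qp$-disk of radius $p^n$ but each non-empty $\Qp$-trace of a sub-disk is itself a $\Qp$-disk of radius $p^n$; pairwise disjointness forces exactly one sub-disk to meet $\Qp$. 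The only subtle point, and thus the main obstacle, is the careful matching of the $K$-radii $p^{m/2}$ and $p^{(m-1)/2}$ with the $\Qp$-radii $p^{\lfloor m/2\rfloor}$ and $p^{\lfloor (m-1)/2\rfloor}$ obtained after intersecting with $\Qp$; the parity-dependent collapse of $p^{(m-1)/2}$ to $p^{\lfloor(m-1)/2\rfloor}$ is precisely what produces the dichotomy in the statement.
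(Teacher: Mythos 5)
Your proof is correct, and it takes a genuinely different and cleaner route than the paper's. The paper proceeds case-by-case: for $p\ge 3$ and for $\Q_2(\sqrt{\pm 2}),\Q_2(\sqrt{\pm 6})$ it uses that $\sqrt{i}$ is itself a uniformizer and reads off which $\pi$-adic digits land in $\Qp$; for $\Q_2(\sqrt{-1})$ and $\Q_2(\sqrt{3})$, where $\sqrt{i}$ is a unit rather than a uniformizer, it needs the explicit distance estimate $|\sqrt{i}-1|_2=\sqrt{2}/2$ together with Corollary \ref{distancep=2} to rule out one of the two sub-disks. Your argument replaces all of this with a uniform counting scheme: recenter at a $\Qp$-point, observe that intersecting a $K$-disk of radius $p^{r}$ ($r\in\tfrac{1}{2}\Z$) with $\Qp$ yields a $\Qp$-disk of radius $p^{\lfloor r\rfloor}$, and then match the $p$ disjoint $\Qp$-traces against the decomposition of $\CD\cap\Qp$ into $\Qp$-disks of the next smaller radius. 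The parity of $m$ controls whether the floor shrinks the trace radius ($m$ even, so all $p$ sub-disks are needed to cover) or leaves it unchanged ($m$ odd, so disjointness forces exactly one). This avoids any dependence on the particular uniformizer or quadratic extension and makes the dichotomy purely combinatorial. Both proofs are complete; yours is arguably the more transparent and generalizes more readily.
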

\begin{proof}
Without loss of generality, we may suppose  $0\in \CD$. So $\CD=\CD(0,p^{m/2})$.

\indent {\rm (1)} Assume $p\geq 3$. Then $K=\Qp(\sqrt{i})$, where $i=p$ or $p\cdot N_{p}$.
Observe that $\sqrt{i}$ is a uniformizer of $K$.
Each point $z\in \CD$ can be written as
$$z=\sum_{k=-m}^{\infty}z_{k}\cdot(\sqrt{i})^{k}$$
where $z_{k}\in \{0,1,\cdots,p-1\}.$
We decompose $\CD$ as $$\CD=\bigsqcup_{z_m\in\{0,1,\cdots,p-1\}}\CD\big(z_{m}\cdot(\sqrt{i})^{m}, \ p^{(m-1)/2}\big).$$

If $m$ is odd, then $\CD(0,p^{(m-1)/2})$ is the unique disk of radius $p^{(m-1)/2}$ which is contained in $\CD$ and intersects $\Qp$.

If $m$ is even, then $z_{m}\cdot(\sqrt{i})^{m}\in \Qp$. So \[\CD(z_{m}\cdot(\sqrt{i})^{m},p^{(m-1)/2})\cap \Qp \neq \emptyset, \quad \forall z_m\in \{0,1,\cdots,p-1\}.\]
\indent {\rm (2)} Assume  $K=\Q_{2}(\sqrt{i})$, where $i=2,-2,6$ or $-6$. We conclude by the same arguments as in Case (1).\\
\indent {\rm (3)} Assume   $K=\Q_{2}(\sqrt{i})$, where $i=-1\mbox{~or~}3$.

If $m$ is even, then $\CD(0,p^{m/2})=\CD(0,p^{(m-1)/2})\cup\CD(p^{m/2},p^{(m-1)/2})$ is a union of two balls. Both balls $\CD(0,p^{(m-1)/2})$ and $ \CD(p^{m/2},p^{(m-1)/2})$
intersect  $\Q_2$.

If $m$ is odd, observe that $|\sqrt{i}-1|_{2}=\sqrt{2}/2$,  then $$\CD(0,p^{m/2})=\CD\big(0,p^{(m-1)/2})\cup\CD((\sqrt{i}-1) p^{(m+1)/2}, \ p^{(m-1)/2}\big),$$
is still a union of two balls.
By Corollary \ref{distancep=2}, we have $$\CD((\sqrt{i}-1) p^{(m+1)/2},\ p^{(m-1)/2})\cap \Q_2=\emptyset.$$
Thus only one ball intersets $\Q_2$.
\end{proof}
The  topological structure of  the ramified quadratic extensions  of $\Q_{3}$  is depicted in Figure \ref{raimipic}.
Here a circle of real line represents a disk which intersects  $\Q_{3}$ and a circle of dotted line represents a disk which is disjoint from $\Q_{3}$.

\begin{figure}
  \centering
  % Requires \usepackage{graphicx}
  \includegraphics[width=0.6\textwidth]{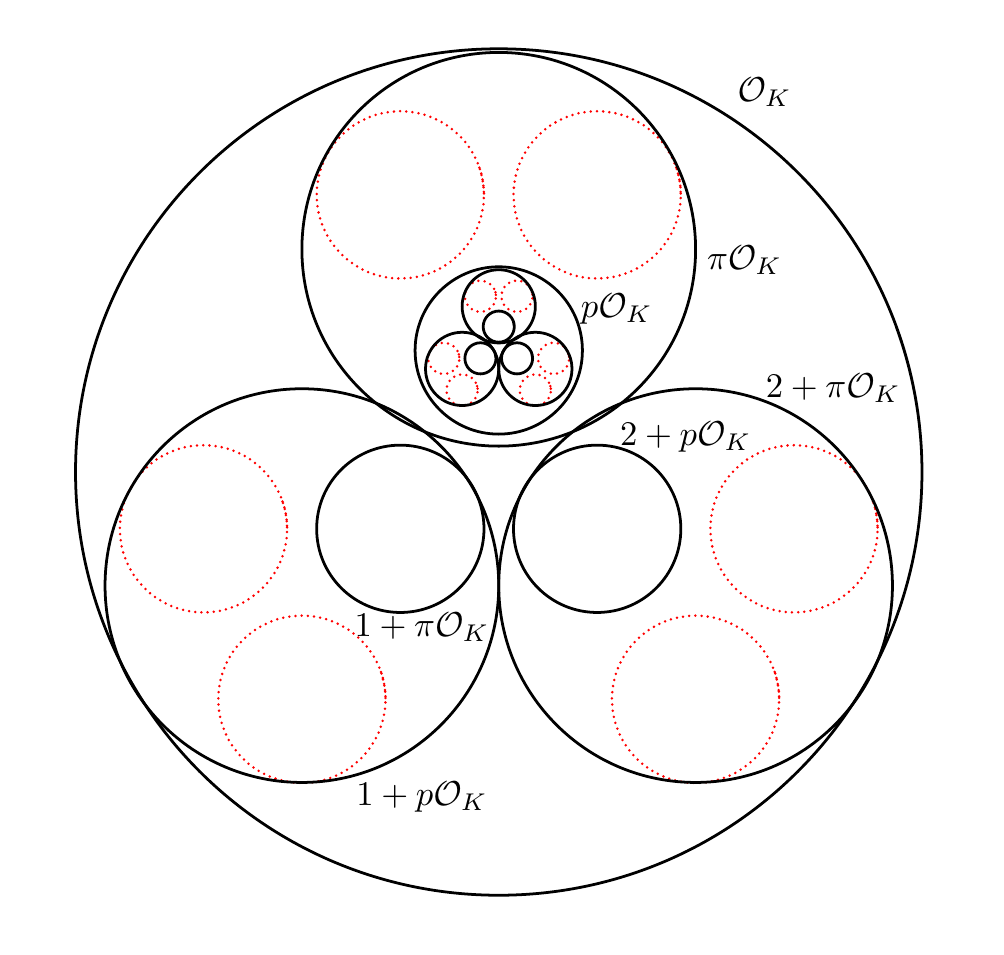}\\
  \caption{The  topological structure of  the ramified quadratic extensions  of $\Q_{3}$}\label{raimipic}
\end{figure}

\medskip
\section{Minimal decomposition when $\phi$ admits fixed points}\label{fixedpoints}

In this section, we give the minimal decomposition of a homographic map $\phi$ when $\phi$ admits one or two fixed points in $\Q_p$. Such a map is
conjugate to an affine map. Then we can apply the results obtained
in \cite{Fan-Fares11}.

 For $a\in \Qp$ and $r\in
|\Qp|_{p}:=p^{\Z}$, denote
$$\overline{D}(a,r):=\{x\in \Qp : |x-a|_{p}\leq r\};$$
$$S(a,r):=\{x\in \Qp : |x-a|_{p}= r\}$$
which are  closed disk and sphere in $\Qp$.  Recall that disk and sphere in
$K$ are differently denoted by $\overline{\mathbb{D}}(a,r)$ and
$\mathbb{S}(a,r)$.

The following notation will also be used. Let $U:=\{x\in \Qp:
|x|_{p}=1\}$ be the group of units in $\Q_p^*$ and $V:=\{x\in \Qp:
 x^{p-1}=1 \}$ be the group of the roots of
unity in $\Qp$. Set
$$s_p=1~~\mbox{if}~~p\geq 3, ~ \text{ and } ~s_p=2 ~\mbox{if}~~p=2.$$ For a unit
$a\in U$, let
$$\delta(a)=\inf\{n\geq 1:v_{p}(a^n-1)\geq s_p\}, \quad v_{0}(a)=v_p(a^{\delta(a)}-1).$$

\subsection{One fixed point: $\Delta=0$}
Assume $\Delta=0$.  Then  $\phi$ has only one fixed point
$x_{0}=\frac{a-d}{2c}\in \Qp$ and it is conjugate to the translation
$$
    \psi(x) = x + \alpha \quad \mbox{\rm with}\ \ \alpha=
    \frac{2c}{a+d}.
$$
More precisely $g\circ \phi\circ g^{-1}= \psi$ where
$$g(x)=\frac{1}{x-x_0}, \quad \text{and thus} \quad
g^{-1}(x)=\frac{x_0x+1}{x}.
$$

\begin{theorem}
Assume that $\Delta=0$. Then $x_0=\frac{a-d}{2c}\in \Qp$ is the
unique fixed point of $\phi$, and we have  %Let $r_0=|\alpha|_{p}$.
\begin{itemize}
\item [{\rm (1)}] $\mathbb{P}^{1}(\Qp)\setminus \overline{D}(x_{0}, p^{-1}|\alpha|_{p}^{-1})$ is a minimal  component of $\phi$,
\item [{\rm (2)}] for any $r=p^{m}\in |\Qp|_{p}$ with $m<v_{p}(\alpha)$, the sphere $S(x_0,r)$ consists of $p^{v_{p}(\alpha)-m-1}(p-1)$ minimal components,
and each minimal  component is a disk with radius $r^{2}|\a|_p$.
\end{itemize}

\end{theorem}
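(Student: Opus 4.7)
The plan is to exploit the conjugacy $g\circ\phi\circ g^{-1}=\psi$ stated just above, where $g(x)=1/(x-x_{0})$ and $\psi(x)=x+\alpha$ with $\alpha=2c/(a+d)$. Since conjugacy preserves minimality and carries orbit closures to orbit closures, it suffices to compute the minimal decomposition of the translation $\psi$ on $\mathbb{P}^{1}(\Qp)$ and then push it forward by $g^{-1}$.

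First I would recall from \cite{Fan-Fares11} that for $\psi$ on $\Qp$, the orbit $\{x+n\alpha:n\in\Z\}$ is dense in its closure $x+\alpha\Zp=\overline{D}(x,|\alpha|_{p})$, and that $\psi$ acts minimally on each such coset (the subgroup $\alpha\Z$ being dense in $\alpha\Zp$). Thus the minimal components of $\psi$ on $\Qp$ are exactly the closed disks of radius $|\alpha|_{p}$; on $\mathbb{P}^{1}(\Qp)$, the fixed point $\infty$ adds a trivial component $\{\infty\}$.

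Next I would transfer these components via $g^{-1}$. Because $g(\infty)=0$ and $g(x_{0})=\infty$, the trivial component $\{\infty\}$ of $\psi$ pulls back to $\{x_{0}\}$, while the $\psi$-minimal disk $\overline{D}(0,|\alpha|_{p})$ pulls back, via $g^{-1}(z)=x_{0}+1/z$, to
\[
g^{-1}\bigl(\overline{D}(0,|\alpha|_{p})\bigr)=\{x\in\Qp:|x-x_{0}|_{p}\geq |\alpha|_{p}^{-1}\}\cup\{\infty\}=\mathbb{P}^{1}(\Qp)\setminus\overline{D}(x_{0},p^{-1}|\alpha|_{p}^{-1}),
\]
where the second equality uses that $|\Qp|_{p}=p^{\Z}$ contains no value strictly between $p^{-1}|\alpha|_{p}^{-1}$ and $|\alpha|_{p}^{-1}$. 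This establishes assertion~(1).

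For assertion~(2), fix $m<v_{p}(\alpha)$ and $r=p^{m}$. The map $g$ sends $S(x_{0},r)$ bijectively onto $S(0,r^{-1})$, and on $S(x_{0},r)$ it satisfies $|g(x_{1})-g(x_{2})|_{p}=r^{-2}|x_{1}-x_{2}|_{p}$ (a special case of Proposition~\ref{disk}(3)(ii)). Since $r^{-1}>|\alpha|_{p}$, the sphere $S(0,r^{-1})$ is a disjoint union of $\psi$-minimal disks of radius $|\alpha|_{p}$, and comparing the cardinalities of $p^{m}\Zp/\alpha\Zp$ and $p^{m+1}\Zp/\alpha\Zp$ gives
\[
p^{v_{p}(\alpha)-m}-p^{v_{p}(\alpha)-m-1}=p^{v_{p}(\alpha)-m-1}(p-1)
\]
such disks in $S(0,r^{-1})$. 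The above isometry then forces each preimage $g^{-1}(\overline{D}(y,|\alpha|_{p}))=\overline{D}(x_{0}+y^{-1},r^{2}|\alpha|_{p})$ to be a closed disk of radius $r^{2}|\alpha|_{p}$, as claimed.

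The entire argument is routine once the conjugacy is invoked; the only subtlety is the careful bookkeeping of the radius under the inversion $g$ on a given sphere around $x_{0}$. This bookkeeping is immediate from the formula in Proposition~\ref{disk}(3)(ii), so I do not expect any substantial obstacle beyond tracking the powers of $r$ and $|\alpha|_{p}$.
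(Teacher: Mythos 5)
Your proposal is correct and follows essentially the same route as the paper: invoke the conjugacy $g\circ\phi\circ g^{-1}=\psi$ with $\psi(x)=x+\alpha$, quote the Fan--Fares description of the minimal components of the translation (closed disks of radius $|\alpha|_p$ that partition $\Qp$, with $\{\infty\}$ a fixed point), and transport them through $g^{-1}(z)=x_0+1/z$ using Proposition~\ref{disk}. Your radius and cardinality bookkeeping (in particular $g^{-1}(\overline{D}(y,|\alpha|_p))=\overline{D}(x_0+y^{-1},r^2|\alpha|_p)$ for $y\in S(0,r^{-1})$, and the count $p^{v_p(\alpha)-m-1}(p-1)$ from the index computation) reproduces the paper's computations exactly, so there is nothing to fix.
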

\begin{proof}
 By Theorem 4.1 in
\cite{Fan-Fares11}, the disk
  $\overline{D}(0,|\alpha|_{p})$ is a minimal component of  $\psi$ and for any $\rho=p^{-m}\in |\Qp|_{p}$ with $m<v_{p}(\alpha)$, and  the sphere $S(0,\rho)$ consists of $p^{v_{p}(\alpha)-m-1}(p-1)$ minimal components of $\psi$, with each component being a ball of radius $|\alpha|_{p}$.

By Proposition \ref{disk},
$$g^{-1}(\overline{D}(0,|\alpha|_{p}))=\mathbb{P}^{1}(\Qp)\setminus \overline{D}(x_{0},p^{-1}|\alpha|_{p}^{-1}).$$
Since $g$ is a conjugation,  $\mathbb{P}^{1}(\Qp)\setminus \overline{D}(x_{0},p^{-1}|\alpha|_{p}^{-1})$ is a minimal  component of $\phi$.

Let $w\in \Qp$ with $|w|_{p}> |\alpha|_{p}$. By Proposition \ref{disk},  we have
$$g^{-1}(\overline{D}(w,|\alpha|_{p}))=\overline{D}(x_{0}+1/w,|\alpha|_{p}/|w|_{p}^{2}).$$
Noticing that $\overline{D}(w,|\alpha|_{p})$ is a minimal ball of $\psi$ in $S(0,\rho)$ with $\rho=|w|_{p}$ and  $\overline{D}(x_{0}+1/w,|\alpha|_{p}/|w|_{p}^{2})$ is a ball in  $S(x_0,r)$ with $r=|w|_p^{-1}$, we obtain the second assertion of the theorem.
%Thus we have for any $r=p^{m}\in |\Qp|_{p}$ with $m<v_{p}(\alpha)$, $S(x_0,r)$ consists of $p^{v_{p}(\alpha)-m-1}(p-1)$ minimal components and  each minimal component  is a disk with radius $r^{2}r_{0}$.
\end{proof}

\subsection{Two fixed points: $\Delta\not=0$ and $\sqrt{\Delta} \in \Qp$}
Assume  $\Delta\neq 0$ and  $\sqrt{\Delta}\in \Qp$. Then $\phi$
has two fixed points
$x_{1}=\frac{a-d+\sqrt{\Delta}}{2c},x_{2}=\frac{a-d-\sqrt{\Delta}}{2c}\in
\Qp$ and it is conjugate to the multiplication
$$
   \psi(x) = \lambda x  \quad {\rm with} \ \
   \lambda:=\frac{a+d+\sqrt{\Delta}}{a+d-\sqrt{\Delta}}.
$$
More precisely $g\circ \phi \circ g^{-1} =\psi$ where
$$g(x)=\frac{x-x_2}{x-x_1}, \quad \text{and thus} \quad
g^{-1}(x) = \frac{x_1 x-x_2}{x-1}.$$

We  distinguish four cases. The first three  cases are simple.
\begin{itemize}
\item [{\rm (a)}]
$|\lambda|_{p}>1$. Then $x_{1}$~is an attracting fixed point and for
each $x\neq x_{2}$ we have $\lim_{n\rightarrow
\infty}\phi^{n}(x)=x_{1}.$
\item [{\rm (b)}] $|\lambda|_{p}<1$.  Then $x_{2}$~is an attracting fixed point and for
each $x\neq x_{1}$ we have $\lim_{n\rightarrow
\infty}\phi^{n}(x)=x_{2}.$
\item [{\rm (c)}] $\lambda\in V$.  Let $\ell$ be the order of $\lambda$, i.e. the
least integer such that $\lambda^{\ell}=1$.
  Then every point is in a periodic orbit with period $\ell$.
\item [{\rm (d)}] $\lambda\in U\setminus V$. We can rephrase Theorem 4.2 of \cite{Fan-Fares11} as follows.
\end{itemize}
Recall the definitions of $\delta(\cdot)$ and $v_0(\cdot)$ at the beginning of this section.
\begin{theorem}
Assume that $\Delta\neq0$, $\sqrt{\Delta}\in \Qp$ and $\lambda \in
U\setminus V$.
Let $r_0=|x_1-x_2|_{p}$.
\begin{itemize}
  \item[\rm{(1)}] For $x,y\in \mathbb{P}^{1}(\Qp)$ we have $\overline{\mathcal{O}_{\phi}(x)}=\overline{\mathcal{O}_{\phi}(y)}$ if and only if $|\frac{x-x_2}{x-x_1}|_{p}=|\frac{y-x_2}{y-x_1}|_{p}$ and $\frac{(x-x_2)(y-x_1)}{(x-x_1)(y-x_2)}$ is in the subgroup of $(\Z/p^{v_{0}(\lambda)}\Z)^{\times} $ generated by $\lambda$.
  \item[\rm{(2)}] One can decompose $\P\setminus (\overline{D}(x_1,r_0/p)\cup \overline{D}(x_2,r_0/p) )$ into $(p-1)p^{v_0(\lambda)-1}/\delta(\lambda)$ minimal components; and for $r \in |\Qp^*|_{p}$ with $r<r_0$, each of both spheres $S(x_1,r)$ and $S(x_2,r)$ consists of $(p-1)p^{v_0(\lambda)-1}/\delta(\lambda)$ minimal components.
  %\item[\rm{(3)}] Each minimal component in the above decomposition  is conjugate to  the adding machine on the odometer $\mathbb{Z}_{(p_s)}$ with  $$(p_s)=(\delta(\lambda),\delta(\lambda)p,\delta(\lambda)p^2,\cdots).$$
\end{itemize}
\end{theorem}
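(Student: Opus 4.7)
The plan is to transfer the minimal decomposition of the multiplication $\psi(z)=\lambda z$ (Theorem 4.2 of \cite{Fan-Fares11}) to $\phi$ via the topological conjugacy $g\circ\phi\circ g^{-1}=\psi$ with $g(x)=(x-x_2)/(x-x_1)$ already exhibited in the text. Since $g:\P\to\P$ is a homeomorphism, orbit closures correspond: $\overline{\mathcal{O}_{\phi}(x)}=\overline{\mathcal{O}_{\phi}(y)}$ iff $\overline{\mathcal{O}_{\psi}(g(x))}=\overline{\mathcal{O}_{\psi}(g(y))}$, and the minimal components of $\phi$ are exactly the $g^{-1}$-images of minimal components of $\psi$.

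For part (1) I would invoke the known characterization for the multiplication $\psi$ with $\lambda\in U\setminus V$: two nonzero $p$-adic points $u,v$ satisfy $\overline{\mathcal{O}_{\psi}(u)}=\overline{\mathcal{O}_{\psi}(v)}$ iff $|u|_p=|v|_p$ and the unit ratio $u/v$ represents an element of the subgroup of $(\Z/p^{v_0(\lambda)}\Z)^{\times}$ generated by $\lambda$. Substituting $u=g(x)$ and $v=g(y)$ recovers the two conditions in (1), since $g(x)/g(y)$ equals $(x-x_2)(y-x_1)/((x-x_1)(y-x_2))$. The exceptional points $x\in\{x_1,x_2,\infty\}$ map under $g$ to $\{\infty,0,1\}$ respectively and are handled directly (the first two are $\psi$-fixed singletons, the third lies on the unit sphere).

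For part (2) I would use Proposition \ref{disk} together with short ultrametric arithmetic to identify the $g$-images of the relevant regions. Using the identity $|x-x_2|_p=r_0$ whenever $|x-x_1|_p<r_0$ (and symmetrically), a direct computation gives, for $r\in|\Qp^{*}|_p$ with $r<r_0$,
\[
g(S(x_1,r))=\{z:|z|_p=r_0/r\},\qquad g(S(x_2,r))=\{z:|z|_p=r/r_0\},
\]
and, using that $p^{\Z}$ contains no element strictly between $r_0/p$ and $r_0$,
\[
g\bigl(\P\setminus(\overline{D}(x_1,r_0/p)\cup\overline{D}(x_2,r_0/p))\bigr)=\{z:|z|_p=1\}.
\]
Since Theorem 4.2 of \cite{Fan-Fares11} decomposes every $\psi$-invariant sphere $\{|z|_p=s\}$, $s\in p^{\Z}$, into exactly $(p-1)p^{v_0(\lambda)-1}/\delta(\lambda)$ minimal components, pulling back by $g^{-1}$ yields the counts asserted in (2). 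The main technical subtlety, and really the only one, is the bookkeeping at the ``gap'' $r_0/p<|\cdot|_p<r_0$: one must verify that no $p$-adic magnitudes exist there, so that the complement of the two small disks is a \emph{single} union mapping onto a single unit sphere in the $\psi$-picture, rather than breaking into a further family of $\phi$-invariant spheres indexed by intermediate radii.
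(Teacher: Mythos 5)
Your proposal is correct and follows the same route the paper intends: the paper merely states ``for the proof we refer the reader to \cite{Fan-Fares11}'', implicitly meaning one should transfer the decomposition of $\psi(z)=\lambda z$ to $\phi$ through the conjugacy $g(x)=(x-x_2)/(x-x_1)$. Your explicit computation of the $g$-images of the invariant regions (the unit sphere, the spheres $S(x_1,r)$ and $S(x_2,r)$), including the observation that $p^{\mathbb Z}$ has no element strictly between $r_0/p$ and $r_0$, and the substitution $g(x)/g(y)$ in part (1), correctly fills in the details that the paper leaves to the reader.
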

For the proof we refer the reader to \cite{Fan-Fares11}. By \cite{Fan-Fares11}, we can also show that each minimal component in the above decomposition  is conjugate to  the adding machine on the odometer $\mathbb{Z}_{(p_s)}$ with  $$(p_s)=(\delta(\lambda),\delta(\lambda)p,\delta(\lambda)p^2,\cdots).$$

\section{Dynamics of multiplications on a finite extension of $\Qp$ }\label{DynExt}
In order to study the case $\sqrt{\Delta}\notin \Qp$, we need to consider the quadratic extension $\Qp(\sqrt{\Delta})$. Actually, the dynamics $(\mathbb{P}^{1}(\Qp(\sqrt{\Delta})),\phi)$ is conjugate to a multiplication $x\mapsto \lambda x$ on $\mathbb{P}^{1}(\Qp(\sqrt{\Delta}))$ (see Section \ref{detail}). For this reason, we present
a general method to study polynomial dynamics on a finite extension of $\Qp$.

Recall that $K$ is a finite extension of $\Qp$, $\mathcal{O}_{K}$ is the integral ring of $K$ and $\pi$ is a uniformizer of $K$(see Section \ref{finiteextension}).
Let $F \in \mathcal{O}_{K}[x]$ be a polynomial of integral
coefficients. The dynamics of $(\mathcal{O}_{K}, F)$ can be
described by its induced finite dynamics on
$\mathcal{O}_{K}/\pi^n\mathcal{O}_{K}$ (refer to  \cite{CFF09}). The idea comes from a paper of
Desjardins and Zieve \cite{DZunpu} in which the authors dealt with the polynomial dynamics on
$\Z/p^n\Z$. This idea allowed Fan and Liao \cite{FanLiao11} to give a
decomposition theorem for any polynomial in $\mathbb{Z}_p[x]$. The polynomials in a finite extension $K$ of $\Qp$ are examined by Fan and Liao \cite{Liao}.

We first present the method for general polynomials and then adopt it to
multiplications by giving more details.
%{\color{red} in the following present things in two subsections. The
%first for general polynomials; the second for the multiplications.}
\subsection{Induced dynamics of polynomial on $\mathcal{O}_{K}/\pi^n\mathcal{O}_{K}$ }
Let $F \in\mathcal{O}_{K}[x]$ be a polynomial of integral
coefficients.  For a positive integer $n\geq 1$, denote by $F_{n}$ the induced mapping of $F$ on $\mathcal{O}_{K}/\pi^n\mathcal{O}_{K}$, defined by
$$F_n(x \!\!\!\mod \pi^n)=F(x)\!\! \mod \pi^n.$$
 Many properties of the dynamics $F$ are linked to those of
$F_n$. One is the following.

\begin{lemma}[\cite{Anashin94, Anashin-Uniformly-distributed02,CFF09}]\label{minimal-part-to-whole}
Let $F\in \O[x]$ and $E\subset \O$ be a compact  $F$-invariant set.
Then $F: E\to E$ is minimal if and only if $F_n: E/\pi^n\O \to
E/\pi^n\O$ is minimal for each $n\ge 1$.
\end{lemma}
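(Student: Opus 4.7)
The plan is to exploit the standard correspondence between $\pi^n$-congruence classes and ultrametric disks in $\O$. For each $y \in E$ and each $n \geq 1$, the set $\CD(y, |\pi|_p^n) \cap E$ is precisely the preimage under the canonical projection $\pi_n : E \to E/\pi^n\O$ of the class $y \!\!\mod \pi^n$, and these disks (which in the ultrametric setting are simultaneously open and closed) form a neighborhood basis of $y$ in $E$. Since $F \in \O[x]$, the identity $F(x)-F(y) = (x-y)\,G(x,y)$ with $G \in \O[x,y]$ shows that $x \equiv y \pmod{\pi^n}$ forces $F(x) \equiv F(y) \pmod{\pi^n}$; hence $F_n$ is well-defined, and the $F$-invariance of $E$ yields $F_n$-invariance of $E/\pi^n\O$.

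For the forward implication, I would suppose $F|_E$ is minimal, fix $n\ge 1$ and $x \in E$, and note that the orbit $\{F^k(x)\}_{k\geq 0}$ is dense in $E$. Its image under $\pi_n$ must therefore meet every fibre of $\pi_n$ (each being a nonempty open disk in $E$), so $\{F_n^k(\pi_n(x))\}_{k\ge 0}$ exhausts the finite set $E/\pi^n\O$. This is exactly minimality of $F_n$.

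For the reverse implication, which is the substantive direction, I would assume $F_n$ is minimal on $E/\pi^n\O$ for every $n \geq 1$. Fix $x \in E$ and an arbitrary $y \in E$; to prove $y \in \overline{\mathcal{O}_F(x)}$ it suffices to find, for each $n \geq 1$, an index $k$ with $|F^k(x) - y|_p \leq |\pi|_p^n$. Minimality of $F_n$ supplies some $k$ with $F_n^k(\pi_n(x)) = \pi_n(y)$, which is exactly the required estimate. Letting $y$ and $n$ vary, $\overline{\mathcal{O}_F(x)} = E$ for every $x$, and $F|_E$ is minimal.

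No serious obstacle is expected. The only structural input is that polynomials over $\O$ are $1$-Lipschitz for the $\pi$-adic metric, so that $F$ factors through each finite quotient $E/\pi^n\O$; everything else is extracted from the ultrametric neighborhood basis and the finiteness of the quotients. Compactness of $E$ plays no essential role in the argument, although it is of course the hypothesis that makes the conclusion useful in the subsequent applications.
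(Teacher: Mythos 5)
The paper quotes this lemma from \cite{Anashin94, Anashin-Uniformly-distributed02,CFF09} without reproducing a proof, so there is no in-paper argument to compare against; I will assess your argument on its own terms. Your proof is correct, and it is the standard argument. You correctly establish that the reduction maps $F_n$ are well defined (via the $1$-Lipschitz property $F(x)-F(y)=(x-y)G(x,y)$ with $G\in\O[x,y]$) and that $\pi_n\circ F^k=F_n^k\circ\pi_n$. The forward implication then follows from density of the orbit in $E$ together with the fact that each fibre of $\pi_n$ is a nonempty clopen subset of $E$; the reverse implication extracts, for every $y\in E$ and every $n$, an index $k_n$ with $|F^{k_n}(x)-y|_p\le|\pi|_p^n$, so that $y\in\overline{\mathcal{O}_F(x)}$.

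One small correction to your closing remark. You say compactness of $E$ plays no essential role; this is somewhat misleading. It is true that the argument never invokes compactness directly, because the finiteness of each quotient $E/\pi^n\O$ is inherited from $\O/\pi^n\O$ being finite. However, minimality in the paper's sense requires $\overline{\mathcal{O}_F(x)}=E$, which already forces $E$ to be closed in $\O$, hence compact. So the hypothesis is not genuinely dispensable; it is rather automatic once the conclusion is meaningful.
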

Assume that $\sigma=(x_1,\cdots,x_k)\subset\mathcal{O}_{K}/\pi^n\mathcal{O}_{K}$ is a cycle of $F_n$ of length $k$, that is
$$ F_n(x_1)=x_2, \cdots, F_n(x_i)=x_{i+1}, \cdots, F_n(x_k)=x_1.$$
Such a $\sigma$ is also called a $k$-cycle at level $n$.  Let
$$X:=\bigsqcup_{i=1}^{k}X_i~\mbox{where}~~X_{i}:=\{x_i+\pi^nt+\pi^{n+1}\mathcal{O}_{K};t\in C\}\subset\mathcal{O}_{K}/\pi^{n+1}\mathcal{O}_{K},$$
where $C=\{c_{0},c_{1},\cdots,c_{p^f-1}\}$ is a fixed
complete set of representatives of the cosets of $\mathcal{P}_{K}$
in $\mathcal{O}_{K}$ (see Section \ref{finiteextension}).
Then
$$F_{n+1}(X_i)\subset X_{i+1}~~(1\leq i\leq k-1)~\mbox{and}~F_{n+1}(X_k)\subset X_1.$$

Let $G:=F^k$ be the $k$-th iterate of $F$. For $x\in \bigcup\limits_{x_{i}\in\sigma}\big(x_{i}+\pi^n\mathcal{O}_{K}\big)$, denote
\begin{eqnarray}
& & a_n(x):=G^{\prime}(x)=\prod_{j=0}^{k-1}F^{\prime}(F^{j}(x)),\label{def-an}\\
&&b_n(x):=\frac{G(x)-x}{\pi^n}=\frac{F^k(x)-x}{\pi^n}.\label{def-bn}
\end{eqnarray}
The $1$-order Taylor expansion of $G$ at $x$
$$G(x+\pi^n t)\equiv G(x)+G^{\prime}(x)\pi^{n}t~~\ ({\rm mod} \ \pi^{2n}),$$
implies
\begin{eqnarray}
& &G(x+\pi^n t)\equiv x+\pi^nb_{n}(x)+\pi^na_n(x)t~~\ ({\rm mod} \ \pi^{2n}).\label{tayor}
\end{eqnarray}
We define an affine map on $\O/ \pi \O$ by
\begin{eqnarray}\label{linearization}
\Phi(x,t)=b_n(x)+a_n(x)t~~\ ({\rm mod} \ \pi),~(t\in \mathcal{O}_{K}/\pi\mathcal{O}_{K}).
\end{eqnarray}

An important consequence of the formula (\ref{tayor}) is that $G_{n+1}:X_i\mapsto X_i$ is conjugate to the linear map
$$\Phi(x_{i},\cdot):\mathcal{O}_{K}/\pi\mathcal{O}_{K}\mapsto \mathcal{O}_{K}/\pi\mathcal{O}_{K}.$$
We  call it the \emph{linearization} of $G_{n+1}:X_{i}\mapsto X_{i}.$

For any $x\in \mathcal{O}_{K}$, $a_n(x)$ and $b_n(x)$ are defined by the formulas (\ref{def-an}) and (\ref{def-bn}).  In order to determine the linearization of $G_{n+1}$, we only need to
get the values modulo $\pi$ of $a_n(x)$ and $b_n(x)$.  In fact, the coefficient $a_{n}(x)~(\!\!\!\!\mod \pi)$ is always constant on $x_{i}+\pi^{n}\mathcal{O}_{K}$, and  the coefficient  $b_{n}(x)$ $(\!\!\!\!\mod \pi)$ is also constant on $x_{i}+\pi^{n}\mathcal{O}_{K}$ but under the condition $a_{n}(x)\equiv 1$ $\ ({\rm mod} \ \pi)$ (see \cite{Liao}). For simplicity, we sometimes write $a_{n}$ and $b_{n}$ without mentioning $x$ if there will be no confusion.

We remind that the cardinality of $\mathbb{K}=\mathcal{O}_{K}/\pi\mathcal{O}_{K}$ is $p^{f}$. The characteristic
of the field $\mathbb{K}$ is $p$. The multiplicative group $\mathbb{K}^{*}$ is cyclic of order $p^{f}-1$.

In the following we shall study the behavior of the finite dynamics $F_{n+1}$ on the $F_{n+1}$-invariant set $X$ and determine all
cycles in $X$ of $F_{n+1}$, which will be called lifts of $\sigma$. For details see \cite{Liao}.

We distinguish the following four behaviors of $F_{n+1}$ on $X$:\\
\indent(a) If $a_n\equiv 1 ~(\!\!\!\!\mod \pi)$ and $b_n\not\equiv 0~(\!\!\!\!\mod \pi)$, then $\Phi$ preserves $p^{f-1}$ cycles of length $p$, so that $F_{n+1}$ restricted
to $X$ preserves $p^{f-1}$ cycles of length $pk$. In this case we say $\sigma$ \emph{grows}.\\
\indent(b) If $a_n\equiv 1 ~(\!\!\!\!\mod \pi)$ and $b_n\equiv 0~(\!\!\!\!\mod \pi)$, then $\Phi$ is the identity, so  that $F_{n+1}$ restricted to $X$ preserves $p^{f}$ cycles of length $k$. In this case we say $\sigma$ \emph{splits}.\\
\indent(c) If $a_n\equiv 0~(\!\!\!\!\mod \pi )$, then $\Phi$ is constant, so that  $F_{n+1}$ restricted to $X$ preserves one cycle of length $k$ and the remaining points of $X$ are mapped into this cycle. In this case we say $\sigma$ \emph{grow tails}.\\
\indent(d) If $a_n\not\equiv 0,1~(\!\!\!\!\mod \pi)$, then $\Phi$ is a permutation and the $l$-th iterate of $\Phi$ reads
$$\Phi^{l}(x,t)=b_n(a_n^{l}-1)/(a_n-1)+a^{l}_{n}t$$
so that
$$\Phi^{l}(x,t)-t=(a_n^{l}-1)(t+\frac{b_{n}}{a_n-1}).$$
Thus, $\Phi$ admits a single fixed point $t=-b_{n}/(a_n-1)$, and the remaining points lie on cycles of length $\ell$, where $\ell$ is the order of $a_n$ in $\mathbb{K}^{*}$. So, $F_{n+1}$ restricted to $X$ preserves one cycle of length $k$ and $\frac{p^{f}-1}{\ell}$ cycles of length $k\ell$. In this case we say $\sigma$ \emph{partially splits}.

For $n\geq 1$, let $\sigma=(x_1,\dots,x_k)\subset \O/\pi^{n}\O$ be a $k$-cycle and
let $\tilde{\sigma}$ be a lift of $\sigma$.
Now we shall show the relation between $(a_n, b_n)$ and $(a_{n+1},
b_{n+1})$. Our aim is to find the change of nature from a cycle to
its lifts.

\begin{lemma}\label{anbn}
Let $\sigma=(x_1,\dots,x_k)$ be a $k$-cycle of $F_n$ and let
$\tilde{\sigma}$ be a lift of $\sigma$ of length $kr$, where $r\geq
1$ is an integer.  We have
\begin{eqnarray}\label{an}
a_{n+1}(x_i+\pi^nt) \equiv a_n^r(x_i) \quad ({\rm mod} \ \pi^{n}),
\qquad (1\leq i \leq k, t\in C)
\end{eqnarray}
\begin{equation}\label{bn}
  \begin{split}
 & \pi b_{n+1}(x_i+\pi^n t)\\
\equiv& t(a_n(x_i)^r-1)  + b_n(x_i)(1+a_n(x_i)+ \cdots +
a_n(x_i)^{r-1})  \quad ({\rm mod} \ \pi^{n}).
\end{split}
\end{equation}
\end{lemma}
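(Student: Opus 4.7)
The plan is to derive both congruences from the chain rule and a first-order Taylor expansion of $G=F^k$, exploiting the fact that $x_i$ is a fixed point of $G$ modulo $\pi^n$. I set $a=a_n(x_i)$, $b=b_n(x_i)$, $y=x_i+\pi^n t$, and $\widetilde{G}=G^r=F^{kr}$, so that by definition $a_{n+1}(y)=\widetilde{G}'(y)$ and $b_{n+1}(y)=(\widetilde{G}(y)-y)/\pi^{n+1}$.

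For the first congruence, the key observation is that since $\sigma$ is a $k$-cycle of $F_n$, $G(x_i)\equiv x_i\pmod{\pi^n}$. Combined with $y\equiv x_i\pmod{\pi^n}$ and $G\in\O[x]$, a short induction on $j$ gives $G^j(y)\equiv x_i\pmod{\pi^n}$ for every $j\ge 0$. Since $G'\in\O[x]$ as well, this forces $G'(G^j(y))\equiv G'(x_i)=a\pmod{\pi^n}$. Applying the chain rule
\[
\widetilde{G}'(y)=\prod_{j=0}^{r-1}G'(G^j(y))
\]
then immediately yields $a_{n+1}(y)\equiv a^r\pmod{\pi^n}$, which is (\ref{an}).

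For the second congruence, the plan is to iterate the one-step identity (\ref{tayor}), written in the form $G(x_i+\pi^n s)\equiv x_i+\pi^n(b+as)\pmod{\pi^{2n}}$ valid for every $s\in\O$, and prove by induction on $j$ that
\[
G^{j}(y)\equiv x_i+\pi^n\bigl(b(1+a+\cdots+a^{j-1})+a^{j}t\bigr)\pmod{\pi^{2n}}.
\]
Setting $j=r$ and subtracting $y=x_i+\pi^n t$ then gives $\widetilde{G}(y)-y\equiv\pi^n\bigl(b(1+a+\cdots+a^{r-1})+(a^r-1)t\bigr)\pmod{\pi^{2n}}$, and dividing by $\pi^{n+1}$ (equivalently, multiplying through by $\pi$ and reducing modulo $\pi^n$) produces (\ref{bn}).

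The whole argument is essentially mechanical, but the point that requires genuine care is the induction step in the second part: when I substitute $G^j(y)=x_i+\pi^n s_j+\pi^{2n}e_j$ into $G$, I must verify that the remainder $\pi^{2n}e_j$ is absorbed into a $\pi^{2n}\O$ error after applying $G$, which follows from $G\in\O[x]$ but should be tracked explicitly so that the $\pi^{2n}$-precision survives all $r$ iterations.
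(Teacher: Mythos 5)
Your proposal is correct and follows essentially the same route as the paper: the congruence (\ref{an}) comes from the chain rule for $(G^r)'$ together with $G^j(y)\equiv x_i \pmod{\pi^n}$, and (\ref{bn}) comes from iterating the first-order linearization of $G$ $r$ times (your explicit induction on $j$ is just the closed form of the $r$-th iterate $\Phi^r(x_i,\cdot)$ that the paper invokes). Your extra care in tracking the $\pi^{2n}$-error through the induction is a welcome, but not substantively different, elaboration.
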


\begin{proof}
 The formula (\ref{an}) follows from
\[
a_{n+1} \equiv (F^r)'(x_i+\pi^n t) \equiv (F^r)'(x_i) \equiv
\prod_{j=0}^{r-1} F'(F^j(x_i))
 \equiv a_n^r \ ({\rm mod} \ \pi^{n}).
 \]

 By repeating $r$-times of the linearization (\ref{linearization}), we obtain
 $$
 F^r(x_i+\pi^nt) \equiv x_i + \Phi^r(x_i, t) \pi^n \quad ({\rm mod} \
 \pi^{2n}),
 $$
 where $\Phi^r$ means the $r$-th iteration of $\Phi$ as function
 of $t$. However,
$$
\Phi^r(x_i ,t) =  ta_n(x_i)^r + b_n(x_i)
 (1+a_n(x_i)+ \cdots + a_n(x_i)^{r-1}).
$$
Thus (\ref{bn}) follows from
  the definition of $b_{n+1}$ and the above two expressions.
\end{proof}
By Lemma \ref{anbn}, we have the following
proposition.
\begin{proposition}
Let $n\geq 1$. Let $\sigma$ be a $k$-cycle of $F_n$
and $\tilde{\sigma}$ be a lift of $\sigma$. Then we have\\
 \indent {\rm 1)} if $a_n \equiv 1 \ ({\rm mod} \ \pi)$, then $a_{n+1} \equiv 1 \ ({\rm mod} \ \pi)$;\\
 \indent {\rm 2)} if $a_n \equiv 0 \ ({\rm mod} \ \pi)$, then $a_{n+1} \equiv 0 \ ({\rm mod} \ \pi)$;\\
 \indent {\rm 3)} if $a_n \not\equiv 0,1 \ ({\rm mod} \ \pi)$ and $\tilde{\sigma}$ is of length
 $k$, then $a_{n+1} \not\equiv 0,1 \ ({\rm mod} \ \pi)$;\\
 \indent {\rm 4)} if $a_n \not\equiv 0,1 \ ({\rm mod} \ \pi)$ and $\tilde{\sigma}$ is of length
 $kd$ where $d\ge 2$ is the order of $a_n$ in $(\O/\pi\O)^*$, then $a_{n+1} \equiv 1 \ ({\rm mod} \ \pi)$.
\end{proposition}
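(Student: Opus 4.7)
The proposition is essentially a direct corollary of Lemma \ref{anbn}, obtained by reducing the congruence \eqref{an} modulo $\pi$. The plan is to use the fact that any lift $\tilde{\sigma}$ of the $k$-cycle $\sigma$ has length $kr$ for some integer $r\geq 1$; this is forced because $\tilde{\sigma}$ projects onto $\sigma$ under the quotient map $\mathcal{O}_K/\pi^{n+1}\mathcal{O}_K \to \mathcal{O}_K/\pi^n\mathcal{O}_K$, so its length is a multiple of $k$. Lemma \ref{anbn} then yields
\[
a_{n+1}(x_i+\pi^n t) \equiv a_n(x_i)^r \pmod{\pi^n},
\]
and since $n\geq 1$ implies $\pi^n\mathcal{O}_K \subset \pi\mathcal{O}_K$, reducing modulo $\pi$ gives the basic identity $a_{n+1} \equiv a_n^r \pmod{\pi}$ on which all four claims will rest.

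From this single reduction each conclusion follows by a short inspection of the residue of $a_n$ modulo $\pi$ together with the value of $r$ dictated by the case hypothesis. Parts (1) and (2) are immediate from the identities $1^r=1$ and $0^r=0$ in the residue field $\mathcal{O}_K/\pi\mathcal{O}_K$, valid for every $r\geq 1$. For part (3), the hypothesis that $\tilde{\sigma}$ has the same length $k$ as $\sigma$ forces $r=1$, whence $a_{n+1}\equiv a_n \pmod{\pi}$ and the residue stays outside $\{0,1\}$. For part (4), the hypothesis gives $r=d$, the multiplicative order of $a_n$ in $(\mathcal{O}_K/\pi\mathcal{O}_K)^{*}$, so by the very definition of $d$ we have $a_n^d\equiv 1\pmod{\pi}$ and therefore $a_{n+1}\equiv 1\pmod{\pi}$.

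I do not expect any real obstacle: the substantive work has already been carried out in Lemma \ref{anbn} via the $1$-order Taylor expansion of $G=F^k$, and the proposition merely extracts its modulo-$\pi$ consequences. The only point requiring care is matching the integer $r$ to each case of the hypothesis, but this matching has been fixed by the discussion of the four behaviors (a)--(d) of $F_{n+1}$ on $X$ earlier in the subsection, so no additional combinatorial argument is needed.
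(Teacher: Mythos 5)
Your proof is correct and takes essentially the same route the paper intends: the paper simply states that the proposition follows ``By Lemma \ref{anbn}'' and leaves the reduction modulo $\pi$ to the reader, which is exactly the single step you fill in, together with the correct identification of $r$ in each of the four cases.
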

By the above analysis, the case of growing tails is simple.
If $\sigma=(x_1, \cdots, x_k)$ is a cycle of $F_n$ which grows
tails, then $F$ admits a $k$-periodic point $x_0$ in the clopen set
$\mathbb{X} =\bigsqcup_{i=1}^k (x_i +\pi^n \O)$ and $\mathbb{X}$ is
contained in the attracting basin of the periodic orbit $x_0,
F(x_0), \cdots, F^{k-1}(x_0)$.

Similarly, for the case of partially splitting, we can also find a periodic orbit in $\mathbb{X}$, and other parts are reduced to the cases of growing and splitting.

We say a cycle splits $\ell$ times if
itself splits, its lifts split, the second generation of the
descendants split, ... , and the ($\ell-1$)-th generation of the
descendants split.

Let $\sigma=(x_1,x_2,\cdots,x_k)$ be a $k$-cycle of $F_n$. For a given sequence of positive integers $\vec{E}=(E_j)_{j\geq 1}$. The cycle $\sigma$ is said to be of \emph{type }$(k,\vec{E})$ if it is a growing $k$-cycle at level $n$ and the lift of this $k$-cycle split $E_1-1$ times; all the $E_1$-th
generation of descendants grow and then all the lifts split $E_2-1$ times; again, the descendants grow and the lifts of the
 growing descendants split $E_3-1$ times, and so on.
  The $F$-invariant
clopen set $\mathbb{X}=\bigsqcup_{i=1}^{k}(x_{i}+\pi^n\mathcal{O}_{K})$
and the system $(\mathbb{X},F)$ are then said to be of type $(k,\vec{E})$ at level $n$.  We say that $\mathbb{X}$
is of \emph{type }$(k,e)$ at level $n$ if $\mathbb{X}$ is of $(k,\vec{E})$ at level $n$ with $\vec{E}=(e,e,\cdots)$, where $e$ is the ramification index of $K$ over $\Qp$.

%Notice that when $p=2$, there is no partially splitting cycle. For the growing and splitting cycles, we distinguish four cases. Let $\sigma$ be a cycle of $F_n$. We say $\sigma$ \emph{strongly grows} if $a_n \equiv 1~(\!\!\!\!\mod \pi)$ and $b_{n}\not\equiv 0~(\!\!\!\!\mod \pi)$, $\sigma\equiv 1~(\!\!\!\!\mod \pi^{2})$ and $b_{n}\equiv 0 (\mod \pi)$, $\sigma$ \emph{weakly grows}
%if $ a_n\not\equiv 1~(\!\!\!\!\mod \pi^{2})$ and $b_n \not\equiv 0~(\!\!\!\!\mod \pi)$.
%We say $\sigma$\emph{ strongly split} if $a_n \equiv 1~(\!\!\!\!\mod \pi^2)$ and $b_n\equiv 0~(\!\!\!\!\mod \pi)$,
%$\sigma$ weakly splits if $a_n\not \equiv 1~(\!\!\!\!\mod \pi^2) $ and $b_n\equiv 0~(\!\!\!\!\mod \pi)$.
\subsection{Dynamics of multiplications on a finite extension of $\Qp$}
For an affine polynomial $F(x)=\alpha x+\beta$ with $\a,\b\in K$, we consider the dynamics $(K,F)$ by distinguishing four cases:\\
\indent(1) if $\a=1, \b=0$, then $F$ is an identity map on $K$;\\
\indent(2) if $\a=0$, then $F$ is a constant map;\\
\indent(3) if $\a=1, \b\not=0$, then  $F$ is conjugate to the translation $\hat{F}(x)=x+1$ by $h(x)=x/\b$;\\
\indent(4) if $\a\neq 0, 1$,  then $F$ is conjugate to the multiplication
$\hat{F}(x) = \a x$  by $h(x) = x-(\b/(1-\a))$.

 The dynamical systems $(K,F)$ of the first two cases are trivial. For the translation $F(x)=x+1$, each closed disk of radius $1$ is $F$-invariant and each subsystem $(\CD(a,1),F)$ is conjugate to $(\O,F)$ by $h(x)=x-a$.
The dynamical system $(\O,F)$ is of type $(1,e)$ at level $0$, for details see \cite{Liao}.

%Since the dynamics $(\mathbb{P}^{1}(\Qp(\sqrt{\Delta})),\phi)$ is conjugate to a multiplication $x\mapsto \alpha x$ on $\mathbb{P}^{1}(\Qp(\sqrt{\Delta}))$,
Now we are going to study the multiplication dynamics $(K,F(x)=\a x)$.
It is easy to see that $0,\infty$ are  two fixed points of $F$.
We distinguish two cases.
%\indent(A) $\a\not \in \U$; \indent(B) $\a\in  \U\setminus \{1\}$.
%
%The following we will treat the two cases separately.

Case (A) $\a\not \in \U$. If $|\a|_p<1$, then the system admits an attracting fixed point $0$ with the whole $K$ being the attracting basin, that is,
$$\lim_{n\rightarrow \infty } F^{n}(x) = 0,~~~~~~~~~\forall x\in K .$$
 If $|\a|_p>1$, then the system admits a repelling fixed point $0$ and the whole $K$ except $0$ lie in  the attracting basin of $\infty$, that is,
$$\lim_{n\rightarrow \infty } F^{n}(x) = \infty,~~~~~~~~~\forall x\in K \setminus \{0\}.$$

Case (B) $\a\in  \U$.  For each $n\in \Z$, the sphere $\SP(0,p^{n/e})=\pi^{-n}\U$  is $F$-invariant.
We decompose $K$ as
$$K=\{0\}\bigcup (\bigcup_{n=-\infty}^{\infty}\pi^{n}\U).$$
The dynamical system $(\pi^{-n}\U,F)$ is conjugate to the system $(\U,F)$
by $h(x)=\pi^{n}x$. See the following commuting graph.
\begin{center}
\begin{picture}(100,90)(0,-65) \put(-5,0){$\pi^{-n}\U$}
\put(25,6){\vector(1,0){40}} \put(70,0){$\pi^{-n}\U$} \put(37,10){$\alpha x
$} \put(10,-10){\vector(0,-1){30}} \put(-10,-25){$\pi^{n} x$}
\put(79,-10){\vector(0,-1){30}} \put(82,-25){$\pi^{n} x $}
\put(8,-55){$\U$} \put(75,-55){$\U$}
\put(25,-52){\vector(1,0){40}} \put(37,-63){$\alpha x$}
\end{picture}
\end{center}
So we are going to study the dynamics $(\U,F)$. We distinguish the  following two cases.\\
\indent(i) $\a\in  \V \setminus \{1\}$. Let $\ell$ be the order of $\a$, i.e., the least integer such that $\a^{\ell}=1 $. Then every point is in a periodic orbit with period $\ell$.\\
\indent(ii) $\a\in  \U \setminus \V$. We obtain the following results which will be useful for our studying of the homographic dynamics.

\begin{proposition}\label{affine}
Let $\alpha\in \mathbb{U}\setminus \mathbb{V}$.  Consider the dynamical system $(\mathbb{U},F)$, where $F(x)=\alpha x$. Let $\ell$ be the order of $\alpha$ in $\mathbb{K}^{*}$. One can decompose $\mathbb{U}$ into
   $(p^f-1) p^{v_{\pi}(\a^\ell-1)\cdot f-f}/\ell$ clopen sets such that each
  clopen set is of type $
(\ell, \vec{E})$ at level $v_{\pi}(\a^\ell-1)$, where
$$\vec{E}=\left(\val(\frac{\a^{\ell p }-1}{\a^{\ell}-1}), \ \val(\frac{\a^{\ell p^2 }-1}{\a^{\ell p}-1}), \ \cdots, \ \val(\frac{\a^{\ell p^{N+1} }-1}{\a^{\ell p^N}-1}), \ e, \ e, \ \cdots\right).$$
%$\vec{E}=(\val(1+\a^{\ell }+\cdots+\a^{(p-1)\ell}), \ \val(1+\a^{\ell p}\cdots+\a^{(p-1)\ell p}), \ \cdots, \ \val(1+\a^{\ell p^N}\cdots+\a^{(p-1)\ell p^N}), \ e, \ e, \ \cdots).$
Here $N$ is the largest integer such that $\val((\a^{\ell p^{N+1} }-1)/({\a^{\ell p^N}-1}))\neq  e$.

Moreover, if the ramification index $e=1\mbox{ or } 2$, then
 $$\val(\frac{\a^{\ell p^{i+1}}-1}{\a^{\ell p^{i}}-1})=e$$
 for all positive integers $i\geq 2$.
\end{proposition}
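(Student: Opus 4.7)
My plan is to specialize the general framework for polynomial dynamics on $\O$ just developed to the multiplication $F(x)=\a x$. The computation of $a_n, b_n$ is very explicit: for any $k$-cycle at level $n$,
$$
 a_n(x) = \a^k, \qquad b_n(x) = \frac{\a^k - 1}{\pi^n}\, x,
$$
so the four behaviors (splitting, growing, growing tails, partially splitting) at level $n$ are decided entirely by comparing $\val(\a^k - 1)$ with $n$. Thus the whole orbit structure will be governed by the arithmetic sequence $\val(\a^{\ell p^{i}} - 1)$, $i \ge 0$.

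First I would look at $F_1$ on $\K^{*} = \U/\pi\O$. This is multiplication by $\bar{\a}$, of order $\ell$, so $\K^{*}$ is partitioned into $(p^{f} - 1)/\ell$ cycles of length $\ell$. For each $1 \le n < m := \val(\a^{\ell} - 1)$, one has $a_n = \a^{\ell} \equiv 1 \pmod \pi$ and $\val(b_n(x)) = m - n \ge 1$ for $x \in \U$, so every $\ell$-cycle splits. Hence after $m - 1$ successive splittings, level $m$ contains exactly $(p^{f} - 1)p^{f(m-1)}/\ell = (p^{f} - 1)p^{fm - f}/\ell$ cycles of length $\ell$; at that level $\val(b_m(x)) = 0$ for $x \in \U$, so each of them now grows, and the associated $F$-invariant clopen sets form the claimed decomposition.

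Next I would determine $\vec E$ by induction on the successive growings. Having just grown at level $n_i := m + E_1 + \cdots + E_i$ to cycles of length $p^{i}\ell$, the same formula gives, at level $n_i + j$, $a = \a^{p^{i+1}\ell} \equiv 1 \pmod \pi$ and $\val(b_{n_i + j}(x)) = \val(\a^{p^{i+1}\ell} - 1) - (n_i + j)$. This valuation is $\ge 1$ for $j = 1, \ldots, E_{i+1} - 1$ and $= 0$ for $j = E_{i+1}$, where
$$
 E_{i+1} = \val\!\bigl((\a^{p^{i+1}\ell} - 1)/(\a^{p^{i}\ell} - 1)\bigr).
$$
So the cycle splits $E_{i+1} - 1$ times and then grows again, producing exactly the vector $\vec E$ of the statement.

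The main obstacle is the stabilization claim $E_{i+1} = e$ for $i \ge 2$ when $e \in \{1, 2\}$. Setting $u := \a^{\ell p^{i}} - 1$, the binomial expansion
$$
 \a^{\ell p^{i+1}} - 1 = (1 + u)^{p} - 1 = \sum_{k=1}^{p} \binom{p}{k} u^k,
$$
together with $\val\!\bigl(\binom{p}{k}\bigr) = e$ for $1 \le k \le p - 1$, shows that the $k = 1$ term has valuation $e + \val(u)$, while the remaining terms have strictly larger valuation provided $(p - 1)\val(u) > e$ (for $p \ge 3$) or $\val(u) > e$ (for $p = 2$). Since $\val(u) \ge m + E_1 + E_2 \ge 3$ whenever $i \ge 2$, a short case check covers all listed pairs $(p, e)$ with $e \in \{1, 2\}$; the tightest case $p = 2$, $e = 2$ needs precisely $\val(u) \ge 3$, which the bound $i \ge 2$ supplies. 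Consequently $\val((1 + u)^{p} - 1) = e + \val(u)$, forcing $E_{i+1} = e$ as claimed.
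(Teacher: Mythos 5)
Your proof is correct and takes essentially the same route as the paper: the same explicit computation $a_n=\alpha^k$, $b_n(x)=(\alpha^k-1)x/\pi^n$ on a $k$-cycle, the same successive split-then-grow bookkeeping to produce $\vec{E}$, and the same binomial-expansion estimate (centered on the comparison $\val(u)>e/(p-1)$) to settle the stabilization when $e\in\{1,2\}$. The only cosmetic difference is that the paper also invokes the factorization $\a^{\ell p^{k+1}}-1=(\a^{\ell p^k}-1)(1+\a^{\ell p^k}+\cdots+\a^{(p-1)\ell p^k})$ together with a cited valuation-growth fact to justify the existence of the finite index $N$ for arbitrary $e$, whereas you address finiteness only in the $e\le 2$ case; since your binomial estimate already forces $E_{i+1}=e$ once $\val(\a^{\ell p^i}-1)>e/(p-1)$, which eventually holds because the valuations are strictly increasing, the general claim follows from your argument as well with one extra sentence.
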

\begin{proof}
Notice that  $\ell $ is  the order of $\a$ in $\K^* $. Thus $\a^{\ell} \equiv 1 \ ({\rm mod} \ \pi)$.
  We can check that there are  $(p^f-1)/\ell $ cycles of length $\ell
  $ at level $1$.
Now we consider the $\ell$-cycles at level $1$.

For each $x_0\in \U$,
  \begin{eqnarray*}
a_1(x_0)&=&(f^\ell)'(x_0)= \a^\ell, \\
b_1(x_0)&=&\frac{f^\ell(x_0)-x_0}{\pi}=\frac{(\a^\ell-1)x_0}{\pi}.
  \end{eqnarray*}

Thus $a_1(x_0)\equiv 1 \ ({\rm mod} \ \pi)$ and
  $\val(b_{1}(x_0))=\val(\a^\ell-1)-1.$
By induction, one can check that each $\ell$-cycle at level $1$ splits $\val(\a^\ell-1)-1 $ times and then
all its lifts grow.

Let $m=\val(\a^\ell-1)+1 $. Consider the $\ell p$-cycles at level
$m$.
 For each $x_0\in \U$,
  \begin{eqnarray*}
a_m(x_0)&=&(f^{\ell p})'(x_0)= \a^{\ell p}, \\
b_m(x_0)&=&\frac{f^{\ell p}(x_0)-x_0}{\pi^m}=\frac{(\a^{\ell
p}-1)x_0}{\pi^m}.
  \end{eqnarray*}
Then we have
\[
 \val(b_m(x_0))=\val({\a^{\ell p}-1})-m=\val(\frac{\a^{\ell p }-1}{\a^{\ell}-1})-1.
\]
Hence each $\ell p$-cycle splits at level $m$ split $\val(\frac{\a^{\ell p }-1}{\a^{\ell}-1})-1$ times and then all its lifts
grow.

Now let $q=\val(\a^{\ell p}-1)+1 $. Consider the $\ell p^2$-cycles at level
$q$. By the same calculations, we have for each $x_0$,
$a_q(x_0)= \a^{\ell p^2}\equiv 1 \ ({\rm mod} \ \pi)$ and
\[
 \val(b_q(x_0))=\val({\a^{\ell p^2}-1})-q=\val(\frac{\a^{\ell p^2 }-1}{\a^{\ell p}-1})-1.
\]
Hence each $\ell p^2$-cycle splits $\val(\frac{\a^{\ell p^2 }-1}{\a^{\ell p}-1})-1$ times and then all its lifts
grow.

Go on this process, we can show that each $\ell p^k$-cycle  at level $\val(\a^{\ell p^{k-1}}-1)+1 $ splits $\val(\frac{\a^{\ell p^k }-1}{\a^{\ell p^{k-1}}-1})-1 $ times and then all its lifts
grow.

Since for $1\leq i \leq p-1$, $\val(\a^{\ell i p^k} -1) \to \infty$ when $k\to \infty$  (see \cite{sch}, p.100), and $\val(p)=e$, we can find
an integer $N$ such that for all $k> N$, we have
\begin{align*}
\val(\frac{\a^{\ell p^{k+1} }-1}{\a^{\ell p^k}-1})-1&=\val(1+\a^{\ell p^k}+\cdots+\a^{(p-1)\ell p^{k}})-1\\
&=\val((1-1)+(\a^{\ell p^k}-1)+\cdots+(\a^{(p-1)\ell p^{k}}-1)+p)-1\\
&=\val(p)-1=e-1.
\end{align*}
Therefore, we can conclude that each $\ell$-cycle at level $\val(\a^\ell-1)$ is of type $
(\ell, \vec{E})$ with
%\begin{eqnarray*}
%% \nonumber to remove numbering (before each equation)
%  \vec{E} &=& (\val(1+\a^{\ell }+\cdots+\a^{(p-1)\ell}), \ \val(1+\a^{\ell p}\cdots+\a^{(p-1)\ell p}), \ \cdots, \\
%   && \ \val(1+\a^{\ell p^N}\cdots+\a^{(p-1)\ell p^N}), \ e, \ e, \ \cdots).
%\end{eqnarray*}
$$\vec{E}=\left(\val(\frac{\a^{\ell p }-1}{\a^{\ell}-1}), \ \val(\frac{\a^{\ell p^2 }-1}{\a^{\ell p}-1}), \ \cdots, \ \val(\frac{\a^{\ell p^{N+1} }-1}{\a^{\ell p^N}-1}), \ e, \ e, \ \cdots\right).$$
%$\vec{E}=(\val(1+\a^{\ell }+\cdots+\a^{(p-1)\ell}), \ \val(1+\a^{\ell p}\cdots+\a^{(p-1)\ell p}), \ \cdots, \ \val(1+\a^{\ell p^N}\cdots+\a^{(p-1)\ell p^N}), \ e, \ e, \ \cdots).$

On the other hand, we have the number of $\ell$-cycle at level $\val(\a^\ell-1)$ is
$$\frac{p^f-1}{\ell} \cdot (p^{f})^{\val(\a^\ell-1)-1},$$ since one $\ell$-cycle splits into $p^f$ number of $\ell$-cycles.

When the ramification index $e$ is $1$ or $2$, it is easy to check that
$\val(\a^{\ell p^i}-1)> e$ for all $i\geq 2$. Let $s=\val(\a^{\ell p^i}-1)$. Write $\a^{\ell p^{i}}=1+\pi^{s}\eta$ for some $\eta \in \U$.
 Then we have $$\a^{\ell p^{i+1}}=(1+\pi^{s}\eta)^{p}=1+{p\choose 1}\pi^{s}\eta+{p\choose 2}(\pi^{s}\eta)^{2}+\cdots+{p\choose p}(\pi^{s}\eta)^{p}.$$
 We need to compare the $\val$-values of the last term and the second term in the above summation. Since $s>\frac{e}{p-1}$, we have
  $$\a^{\ell p^{i+1}}=1+p\pi^{s}\eta ~~(\!\!\!\!\mod  \pi^{s+e}).$$
Then $\val(\frac{\a^{\ell p^{i+1}}-1}{\a^{\ell p^{i}}-1})=e$.

\end{proof}
\begin{remark}\label{remark}
In the proof of last assertion of Proposition \ref{affine}, the essential point is $s>\frac{e}{p-1}$. So one can easily check that
$$\val(\frac{\a^{\ell p^{2}}-1}{\a^{\ell p}-1})=e,$$ except for the case where $e=2, p=2$ and $s=\val(\a^{\ell p}-1)=2$.
\end{remark}
\begin{corollary}\label{affinep>3}
Let $\alpha\in \mathbb{U}\setminus \mathbb{V}$.  Consider the dynamical system $(\mathbb{U},\alpha x)$.  Assume $p\geq 3$. If $\val(\a^\ell-1)>\frac{e}{p-1}$, then $\mathbb{U}$ is decomposed into $(p^f-1) p^{v_{\pi}(a^\ell-1)\cdot f-f}/\ell$ clopen sets such that each
clopen set is of type $(\ell,e)$ at level $v_{\pi}(a^\ell-1)$.
\end{corollary}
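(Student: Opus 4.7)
The plan is to derive this statement as a direct specialization of Proposition \ref{affine}, showing that under the hypothesis $\val(\alpha^\ell-1) > e/(p-1)$ with $p \geq 3$, the integer $N$ appearing in Proposition \ref{affine} is forced to be $0$, so that the sequence $\vec{E}$ is the constant sequence $(e, e, e, \ldots)$ from the outset. Once this is established, the conclusion on both the type $(\ell, e)$ and the count $(p^f-1)p^{v_\pi(\alpha^\ell-1)f - f}/\ell$ of clopen components is immediate from that proposition.

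The heart of the argument is to prove by induction on $k \geq 0$ the equality
\[
\val(\alpha^{\ell p^{k+1}} - 1) = \val(\alpha^{\ell p^k} - 1) + e,
\]
which is equivalent to $\val\bigl((\alpha^{\ell p^{k+1}}-1)/(\alpha^{\ell p^k}-1)\bigr) = e$. Setting $s_k := \val(\alpha^{\ell p^k}-1)$ and writing $\alpha^{\ell p^k} = 1 + \pi^{s_k}\eta$ with $\eta \in \U$, I would expand
\[
\alpha^{\ell p^{k+1}} = (1+\pi^{s_k}\eta)^p = 1 + p\pi^{s_k}\eta + \sum_{j=2}^{p}\binom{p}{j}\pi^{j s_k}\eta^j,
\]
exactly as in the final paragraph of the proof of Proposition \ref{affine}. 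The leading term $p\pi^{s_k}\eta$ has valuation $s_k + e$, so I need every other term to have strictly larger valuation. For $2 \le j \le p-1$ this requires $\val(\binom{p}{j}) + j s_k > s_k + e$, i.e.\ $(j-1)s_k > 0$, which is automatic; for $j = p$ it requires $p s_k > s_k + e$, i.e.\ $s_k > e/(p-1)$.

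Thus the whole step reduces to maintaining the inequality $s_k > e/(p-1)$ throughout the induction. The base case $s_0 = \val(\alpha^\ell - 1) > e/(p-1)$ is precisely the hypothesis, and the inductive step is automatic since $s_{k+1} = s_k + e > e/(p-1) + e > e/(p-1)$. With $N = 0$ secured, Proposition \ref{affine} gives the decomposition into $(p^f-1)p^{v_\pi(\alpha^\ell-1)f - f}/\ell$ clopen sets, each of type $(\ell, e)$ at level $v_\pi(\alpha^\ell - 1)$. I do not foresee a real obstacle here; the only mild subtlety is confirming that the hypothesis $p \geq 3$ rules out the exceptional configuration $(e,p,s) = (2,2,2)$ flagged in Remark \ref{remark}, which it trivially does.
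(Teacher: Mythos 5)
Your proposal is correct and follows essentially the same route as the paper: both arguments expand $(1+\pi^{s}\eta)^p$, observe that the term $p\pi^{s}\eta$ dominates precisely because $s>e/(p-1)$ (which is preserved under $s\mapsto s+e$), and conclude by induction that every ratio $\val\bigl((\alpha^{\ell p^{k+1}}-1)/(\alpha^{\ell p^{k}}-1)\bigr)$ equals $e$, so that $\vec{E}=(e,e,\dots)$ in Proposition \ref{affine}. Your write-up is in fact slightly more explicit than the paper's (which only carries out the first step and says ``by induction''), the only cosmetic slip being the phrase ``$N=0$'' where you really mean that no exceptional index $N$ exists, i.e.\ already the first entry of $\vec{E}$ equals $e$.
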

\begin{proof}Let $m=\val(\a^\ell-1)$. Consider the $\ell p$-cycles at level $m+1$, we have
\[\val(b_{m+1})=\val(\a^{\ell p}-1)-\val(\a^\ell-1)-1.\]
 Write $\a^{\ell}=1+\pi^{m}\eta$ for some $\eta \in \U$.
 Then we have $$\a^{\ell p}=(1+\pi^{m}\eta)^{p}=1+{p\choose 1}\pi^{m}\eta+{p\choose 2}(\pi^{m}\eta)^{2}+\cdots+{p\choose p}(\pi^{m}\eta)^{p}.$$
 We need to compare the $\val$-values of the last term and the second term in the above summation.
 Since $m>\frac{e}{p-1}$, we have $mp>e+m$. Then
 $$\a^{\ell p}=1+p\pi^{m}\eta ~~\ ({\rm mod} \  \pi^{e+m+1}).$$
So $$\val(b_{m+1}(x_i+\pi^m t))=e-1,$$ which implies that the $\ell p$-cycles at level $\val(\a^\ell-1)+1$ splits $e-1$ times and then all the lifts grows.
By induction, each $\ell$-cycle at level $\val(\a^\ell-1)$  is of type $(\ell,e)$.
\end{proof}

\section{Minimal decomposition when $\phi$ admits no fixed point}\label{detail}

In this main part of the paper, we determine the minimal decomposition of a homographic map $\phi$ without fixed point in $\Q_p$.
%\subsection{No fixed points}\label{detail}

Assume   $\sqrt{\Delta}\notin \Qp$. Then $\Qp$
contains no fixed point of $\phi$. Consider the quadratic extension
$\Qp(\sqrt{\Delta})$ of $\Qp$. Then in $\Qp(\sqrt{\Delta})$, $\phi$ has two fixed points
\[x_{1}=\frac{a-d+\sqrt{\Delta}}{2c},\quad x_{2}=\frac{a-d-\sqrt{\Delta}}{2c}.\]
 From now on, let $K:=\Qp(\sqrt{\Delta})$ denote the quadratic extension of $\Q_p$ generated by $\sqrt{\Delta}$
 and let $\mathbb{K}$ be the residual class field of $K$.
 Since $K$ is a quadratic extension of $\Qp$,
$K$ is either a totally ramified extension or an unramified extension.
Let $$ r_0=|x_1-x_2|_{p}=\left|\frac{\sqrt{\Delta}}{c}\right|_p \quad \text{and}\quad \lambda
=\frac{a+d+\sqrt{\Delta}}{a+d-\sqrt{\Delta}} \in K\setminus \Qp.$$
The following proposition  shows that $\phi$ is always conjugate to a multiplication.
We can easily prove it by checking the claimed conjugacy.
\begin{proposition}
The dynamical system $(\PK, \phi)$ is topologically conjugate to $(\PK,
\psi)$ where $\psi$ is the multiplication
$
\psi(x)= \lambda x.
$
In other words, $g\circ \phi \circ g^{-1} = \psi$ where
$$
    g(x) = \frac{x-x_2}{x-x_1}.
$$
\end{proposition}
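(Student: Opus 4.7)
The plan is to verify that the Möbius transformation $g(x) = (x-x_2)/(x-x_1)$ sends the two fixed points of $\phi$ to the two fixed points of $\psi$, i.e.\ $g(x_2) = 0$ and $g(x_1) = \infty$, and then to identify the conjugate $g\circ\phi\circ g^{-1}$ as a multiplication by computing its multiplier at $0$. Once this is done, the explicit form of the multiplier must coincide with the stated $\lambda$, giving the conjugacy.

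First I would note that since $\sqrt{\Delta}\notin\Qp$, the elements $x_1,x_2$ are distinct in $K$, hence the matrix associated to $g$ has nonzero determinant and $g\in\mathrm{PGL}(2,K)$. In particular $g$ is a bijection of $\PK$ and its inverse is also in $\mathrm{PGL}(2,K)$, so $\tilde\psi := g\circ\phi\circ g^{-1}$ is a homographic self-map of $\PK$. Since $\phi(x_i)=x_i$ for $i=1,2$ and $g(x_2)=0$, $g(x_1)=\infty$, the map $\tilde\psi$ fixes $0$ and $\infty$. Any element of $\mathrm{PGL}(2,K)$ fixing both $0$ and $\infty$ is necessarily of the form $x\mapsto \mu x$ for some $\mu\in K^{*}$, so $\tilde\psi(x)=\mu x$.

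Next I would determine $\mu$ as the multiplier of $\tilde\psi$ at $0$, which equals the multiplier of $\phi$ at $x_2$, namely $\phi'(x_2)$. A direct computation gives
\[
\phi'(x) = \frac{ad-bc}{(cx+d)^{2}}, \qquad cx_2+d = \frac{a+d-\sqrt{\Delta}}{2}.
\]
Using $\Delta=(d-a)^{2}+4bc=(a+d)^{2}-4(ad-bc)$, one has
\[
4(ad-bc) = (a+d)^{2}-\Delta = (a+d+\sqrt{\Delta})(a+d-\sqrt{\Delta}),
\]
hence
\[
\phi'(x_2) = \frac{4(ad-bc)}{(a+d-\sqrt{\Delta})^{2}} = \frac{a+d+\sqrt{\Delta}}{a+d-\sqrt{\Delta}} = \lambda,
\]
which proves $\mu=\lambda$ and therefore $g\circ\phi\circ g^{-1}=\psi$.

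There is essentially no obstacle in this argument: everything reduces to the standard fact that Möbius transformations are triply transitive on $\PK$ and to a short algebraic computation relating $\Delta$ to $ad-bc$. As a sanity check one could also perform the brute force verification $g\circ\phi(x) = \psi\circ g(x)$ as an equality of rational functions in $K(x)$, clearing denominators and using the identities $cx_i^{2}+(d-a)x_i-b=0$; but the conceptual route via ``fix $0$ and $\infty$, then read off the multiplier'' is cleaner and is the path I would follow.
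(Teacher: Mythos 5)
Your proof is correct. The paper itself gives no details, stating only that the proposition follows ``by checking the claimed conjugacy,'' i.e.\ the brute-force verification $g\circ\phi = \psi\circ g$ that you mention at the end as a sanity check. Your route is the more conceptual one: since $\Delta\neq 0$ the points $x_1,x_2$ are distinct, so $g\in\mathrm{PGL}(2,K)$; the conjugate $g\circ\phi\circ g^{-1}$ fixes $0$ and $\infty$ and is therefore a multiplication $x\mapsto\mu x$; and $\mu$ is recovered as the multiplier $\phi'(x_2)$ at the fixed point sent to $0$, which your identity $4(ad-bc)=(a+d+\sqrt{\Delta})(a+d-\sqrt{\Delta})$ correctly evaluates to $\lambda$. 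This argument has the small advantage of \emph{explaining} where the formula for $\lambda$ comes from (it is the multiplier of $\phi$ at $x_2$, a conjugation invariant), and it matches the consistency check that the multiplier at $x_1$ is $1/\lambda$, the multiplier of $\psi$ at $\infty$. The direct verification the paper has in mind is equally short but less illuminating. No gaps.
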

If $|a+d|_{p}\neq |\sqrt{\Delta}|_{p}$,   it is easy to see that $|a+d+\sqrt{\Delta}|_{p}=|a+d-\sqrt{\Delta}|_{p}$.
 If $|a+d|_{p}= |\sqrt{\Delta}|_{p}$, we have the same result as the following lemma shows.
\begin{lemma}\label{lambda}
If $|a+d|_{p}= |\sqrt{\Delta}|_{p}$, then we have $|a+d+\sqrt{\Delta}|_{p}=|a+d-\sqrt{\Delta}|_{p}$.
\end{lemma}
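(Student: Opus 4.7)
My plan rests on the identity
\[
(a+d+\sqrt{\Delta})(a+d-\sqrt{\Delta}) \;=\; (a+d)^{2}-\Delta \;=\; (a+d)^{2}-(a-d)^{2}-4bc \;=\; 4(ad-bc),
\]
obtained by expanding $\Delta=(d-a)^{2}+4bc$. Since $ad-bc\neq 0$, the right-hand side is a nonzero element of $\Q_p$, whose absolute value is therefore known to lie in $p^{\Z}$; in particular
\[
|a+d+\sqrt{\Delta}|_{p}\cdot|a+d-\sqrt{\Delta}|_{p} \;=\; |4(ad-bc)|_{p}.
\]
The whole problem is then to show that this product splits evenly, i.e.\ each factor equals $|4(ad-bc)|_{p}^{1/2}$.

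My preferred route is to invoke the uniqueness of the extension of $|\cdot|_{p}$ from the complete field $\Q_p$ to its quadratic extension $K=\Q_p(\sqrt{\Delta})$. If $\sigma$ denotes the nontrivial element of $\mathrm{Gal}(K/\Q_p)$, then $y\mapsto|\sigma(y)|_{p}$ is also a nonarchimedean absolute value on $K$ restricting to $|\cdot|_{p}$ on $\Q_p$, so by uniqueness it coincides with $|\cdot|_{p}$. Applying this to $y=a+d+\sqrt{\Delta}$, whose Galois conjugate is $\sigma(y)=a+d-\sqrt{\Delta}$, gives the claimed equality at once (and each factor is then automatically $|4(ad-bc)|_{p}^{1/2}$). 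This argument is short, uniform in $p$, and requires no case analysis over the seven/three quadratic extensions listed earlier in the excerpt.

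If one insists on a bare-hands approach in the spirit of the preceding subsections, one can proceed as follows. The hypothesis $|a+d|_{p}=|\sqrt{\Delta}|_{p}$ forces $v_{p}(\Delta)$ to be even, so we may write $a+d=\pi^{k}w$ and $\sqrt{\Delta}=\pi^{k}\eta$ with $|w|_{p}=|\eta|_{p}$, reducing the claim to $|w+\eta|_{p}=|w-\eta|_{p}$ for a unit $w\in\Z_p^{\times}$ and a unit $\eta\in K\setminus\Q_p$. For $p\geq3$, the only case compatible with the hypothesis is the unramified extension $K=\Q_p(\sqrt{N_p})$; since $\eta^{2}\equiv N_{p}\pmod{p}$ is not a quadratic residue, $w^{2}-\eta^{2}\in\Z_p^{\times}$, and combined with the ultrametric bound $|w\pm\eta|_{p}\leq 1$ both factors must equal $1$. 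For $p=2$ one splits into the three possible extensions and invokes Proposition \ref{x-sqrt} to compute $|w\pm\eta|_{2}$ directly in each case. The principal nuisance of this elementary route is the $p=2$ bookkeeping, which is exactly what the Galois argument bypasses; I would therefore present the Galois version as the proof.
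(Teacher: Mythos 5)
Your Galois argument is correct and genuinely different from the paper's proof. The paper proceeds by brute-force case analysis: for $p\geq 3$ it applies Corollary \ref{sqrtx+yp>2} (which rests on the computation of $d(\sqrt{x},\Q_p)$ in Corollary \ref{p>2distance}), and for $p=2$ it observes that the hypothesis rules out the four ramified extensions $\Q_2(\sqrt{\pm 2}),\Q_2(\sqrt{\pm 6})$ and then invokes Proposition \ref{x-sqrt} for the remaining three. Your preferred route instead notes that $a+d+\sqrt{\Delta}$ and $a+d-\sqrt{\Delta}$ are Galois conjugates in $K=\Q_p(\sqrt{\Delta})$, and that the nontrivial $\sigma\in\mathrm{Gal}(K/\Q_p)$ is an isometry for the (unique) extension of $|\cdot|_p$ to $K$ since $y\mapsto|\sigma(y)|_p$ is a non-archimedean absolute value on $K$ restricting to $|\cdot|_p$ on $\Q_p$. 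This is cleaner and strictly stronger: it shows the conclusion holds \emph{unconditionally}, without the hypothesis $|a+d|_p=|\sqrt{\Delta}|_p$, which explains in one line the remark preceding the lemma that the case $|a+d|_p\neq|\sqrt{\Delta}|_p$ is "easy to see." The trade-off is that the paper's elementary route stays within the toolkit already built up in Section \ref{distances} (the distance computations, which are reused repeatedly later), whereas yours imports the uniqueness-of-extension theorem; also your byproduct identity $(a+d+\sqrt{\Delta})(a+d-\sqrt{\Delta})=4(ad-bc)$ pins down the common value as $|4(ad-bc)|_p^{1/2}$, a small bonus the paper's proof doesn't state. Your sketch of the bare-hands fallback is also sound for $p\geq3$ (the hypothesis does force $K$ unramified since $v_p(\sqrt{\Delta})\in\Z$), and for $p=2$ you correctly defer to the three ramified/unramified cases with $v_2(\sqrt{\Delta})\in\Z$.
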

\begin{proof}
 For $p\geq 3$, Corollary \ref{sqrtx+yp>2} implies immediately that $$|a+d+\sqrt{\Delta}|_{p}=|a+d-\sqrt{\Delta}|_{p}=|\sqrt{\Delta}|_{p}.$$
In the following we discuss the case  $p=2$. If $\Q_2(\sqrt{\Delta})=\Q_2(\sqrt{i})$ with $i=2,-2,6$ or $-6$, then $|a+d|_{p}\neq |\sqrt{\Delta}|_{p}$.
So we need only consider the following two cases. \\
 \indent(1) If $\Q_2(\sqrt{\Delta})=\Q_2(\sqrt{-3})$, the assertion (1) of Proposition \ref{x-sqrt} implies that
$$|a+d+\sqrt{\Delta}|_{2}=|a+d-\sqrt{\Delta}|_{2}=\frac{1}{2}|\sqrt{\Delta}|_{2}.$$
\indent(2) If $\Q_2(\sqrt{\Delta})=\Q_2(\sqrt{-1})$ or $\Q_2(\sqrt{3})$, the assertion (2) of Proposition \ref{x-sqrt} leads to
$$|a+d+\sqrt{\Delta}|_{2}=|a+d-\sqrt{\Delta}|_{2}=\frac{\sqrt{2}}{2}|\sqrt{\Delta}|_{2}.$$
\end{proof}

A direct consequence of Lemma \ref{lambda}  is $|\lambda|_p=1$.
From the point of dynamics, we can also explain this. Since $\P$ is $\phi$-invariant, it is easy to see that  neither $x_{1}$ nor  $x_{2}$ is attracting. So $|\lambda|_{p}=1$.  We will distinguish two cases. \\
\indent(a) $\lambda^n=1$ for some $n\ge 1$. Let $\ell$ be the order of
$\lambda$, i.e.,the least positive integer such that
$\lambda^{\ell}=1$.
  It is easy to see that all points are in a periodic orbit with period $\ell$.\\
\indent(b) $\lambda^n\neq 1 $ for all $n\ge 1$. We will treat the cases $p\geq 3$ and $p=2$ separately.

In the remainder of this section we assume that $\lambda=\frac{a+d+\sqrt{\Delta}}{a+d-\sqrt{\Delta}}$ is not a root of unity and let $\ell$ be the order in the group $\mathbb{K}^*$ of
$\lambda$ and $\pi$ be a uniformizer of $K$.

For $p\geq 3$, we have the following two theorems corresponding to the unramified quadratic extension and the ramified quadratic  extensions.
\begin{theorem}\label{mindecunrami}
Assume $p\geq 3$ and $\sqrt{\Delta}\notin \Qp$. Suppose that
$K=\Qp(\sqrt{\Delta})$ is an unramified quadratic extension of $\Qp$.  Then
$\ell|(p+1)$
 and the dynamics $(\mathbb{P}^{1}(\Qp), \phi)$ is decomposed into
$((p+1)p^{v_{p}(\lambda^{\ell}-1)-1})/\ell$ minimal subsystems.
The minimal subsystems are topologically conjugate to the
adding machine on the odometer $\mathbb{Z}_{(p_s)}$  with $(p_s)=(\ell,\ell p,\ell p^2,\cdots)$.
\end{theorem}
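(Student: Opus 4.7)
The approach is to push $\phi$ forward to the multiplication $\psi(x)=\lambda x$ through the conjugacy $g$, and then apply Proposition \ref{affine} to $\psi$ on $\U\subset K$. Because $\phi$ has $\Qp$-coefficients the fixed points $x_1,x_2$ are Galois conjugate over $\Qp$, so for $x\in\Qp$ a direct calculation gives $g(x)^{\sigma}=1/g(x)$, where $\sigma$ generates $\mathrm{Gal}(K/\Qp)$; together with $g(\infty)=1$ this shows that $g$ maps $\P$ bijectively onto the norm-one group
\[
G:=\{y\in K^*:yy^{\sigma}=1\}\subset\U.
\]
A direct computation also gives $\lambda\lambda^{\sigma}=1$, so $\lambda\in G$. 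In the unramified case $\sigma$ descends to the Frobenius $x\mapsto x^p$ on $\K=\mathbb{F}_{p^2}$; reducing $\lambda\lambda^{\sigma}=1$ modulo $\pi$ yields $\bar\lambda^{p+1}=1$, hence $\ell\mid(p+1)$.

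Next I would apply Proposition \ref{affine} with $\alpha=\lambda$, $e=1$, $f=2$. The hypothesis $v_{\pi}(\lambda^\ell-1)\ge 1>1/(p-1)$ of Corollary \ref{affinep>3} is automatic for $p\ge 3$, so $\U$ decomposes into $(p^2-1)p^{2n-2}/\ell$ minimal components, each of type $(\ell,1)$ at level $n:=v_p(\lambda^\ell-1)$, and each conjugate to the adding machine on $\Z_{(p_s)}$ with $(p_s)=(\ell,\ell p,\ell p^2,\dots)$. Since $G$ is closed and $\psi$-invariant, every $\psi$-orbit-closure inside $G$ is a minimal component of $(\U,\psi)$ contained in $G$; so $G$ is a disjoint union of some of these components, and the desired minimal decomposition of $(\P,\phi)$ is the pull-back through $g^{-1}$ of those components of $(\U,\psi)$ that meet $G$.

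The crux is to count them. Each component consists of $\ell$ disks of the form $y+p^n\O$, and since $\psi$ preserves $G$ all $\ell$ disks meet $G$ as soon as one of them does. Because $|y|_p=1$, the additive disk $y+p^n\O$ equals the multiplicative coset $y(1+p^n\O)$, so the disks meeting $G$ correspond bijectively to the cosets of $G\cap(1+p^n\O)$ in $G$. In the unramified case the norm $N:\O^*\to\Zp^*$ is surjective and $N(1+p^n\O)=1+p^n\Zp$, which gives the short exact sequence
\[
1\to G/(G\cap(1+p^n\O))\to \O^*/(1+p^n\O)\to \Zp^*/(1+p^n\Zp)\to 1.
\]
Counting yields $[G:G\cap(1+p^n\O)]=\frac{p^{2(n-1)}(p^2-1)}{p^{n-1}(p-1)}=(p+1)p^{n-1}$, so dividing by $\ell$ gives $(p+1)p^{n-1}/\ell$ minimal components, as claimed. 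The odometer structure on each component is inherited from the type-$(\ell,1)$ classification on $\U$.

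The only delicate point is the two norm-surjectivity facts used in the counting step; both can be proved by hand from the surjectivity of the trace $\mathrm{Tr}:\O\to\Zp$ on an unramified quadratic extension combined with a Hensel-type iteration, so no class-field-theoretic machinery is actually needed.
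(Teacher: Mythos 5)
Your identification of $g(\P)$ with the norm-one group $G=\{y\in K^*:yy^\sigma=1\}$ is a genuinely different and cleaner route than the paper takes. The paper only proves $g(\P)\subset\SP(0,1)$ (Lemma~\ref{subsystem}) and then does all the counting by hand with Lemmas~\ref{unrami-number}, \ref{unramify1} and \ref{induction} (how many sub-disks of radius $p^{-m-1}$ inside a radius-$p^{-m}$ disk meet $g(\P)$). Your exact sequence
\[
1\to G/(G\cap(1+p^n\O))\to \O^*/(1+p^n\O)\xrightarrow{N}\Zp^*/(1+p^n\Zp)\to 1
\]
replaces all of that in one stroke, and your Frobenius argument $\bar\lambda^{p+1}=1$ gives $\ell\mid(p+1)$ directly, whereas the paper's justification of this divisibility is somewhat opaque. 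These are real improvements.

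However, your first paragraph contains a genuine error that you never repair. Proposition~\ref{affine} (and Corollary~\ref{affinep>3}) decomposes $\U$ into \emph{clopen sets of type $(\ell,1)$}, not into minimal components, and for a quadratic extension ($f=2$) these clopen sets are \emph{not} minimal. By the definition of ``growing'' in Section~\ref{DynExt}, a growing $k$-cycle at level $m$ lifts to $p^{f-1}=p$ cycles of length $pk$ at level $m+1$; so a type-$(\ell,1)$ clopen set $B_i$ projects to $p^{m}$ distinct cycles of length $\ell p^{m}$ at level $n+m$. Hence $(B_i,\psi)$ is far from minimal, and in particular $B_i$ is \emph{not} conjugate to the adding machine on $\Z_{(\ell,\ell p,\ell p^2,\dots)}$ as you assert. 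The correct statements are that the true minimal components of $(\U,\psi)$ are the cosets of $\overline{\langle\lambda\rangle}$, and that the intersections $B_i\cap G$ are the minimal components of $(G,\psi)$. Your orbit-closure sentence (``every $\psi$-orbit-closure inside $G$ is a minimal component of $(\U,\psi)$ contained in $G$'') is fine because $\psi$ is an isometry, but it does not follow from what you wrote before, and it does not by itself justify ``each component consists of $\ell$ disks.'' To close the gap you must argue, e.g., that $\overline{\mathcal{O}_\psi(y)}=y\,\overline{\langle\lambda\rangle}$ and that the image of $\overline{\langle\lambda\rangle}$ in $G/(G\cap(1+p^m\O))$ is cyclic of order exactly $\ell p^{m-n}$ for $m\ge n$ (which uses $v_\pi(\lambda^\ell-1)=n$ and the growth $v_\pi(\lambda^{\ell p^k}-1)=n+k$ from Corollary~\ref{affinep>3}); this is precisely what makes the division by $\ell$ legitimate and gives the odometer type $(\ell,\ell p,\ell p^2,\dots)$. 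The paper reaches the same conclusion by a different means (Lemma~\ref{induction}): among the $p$ lifted cycles at each level there is exactly one whose disks all meet $g(\P)$, whence $B_i\cap g(\P)$ is a single cycle at every level and Lemma~\ref{minimal-part-to-whole} gives minimality.
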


\begin{theorem}\label{rami-decomposition-p>2}
Assume $p\geq 3$ and $\sqrt{\Delta}\notin \Qp$. Suppose that $K=\Qp(\sqrt{\Delta})$ is
a ramified quadratic extension of $\Qp$. Since $|a+d|_{p}\neq  |\sqrt{\Delta}|_{p}$, we distinguish two cases. \\
\indent \mbox{\rm (1)} \ If $|a+d|_{p}> |\sqrt{\Delta}|_{p}$, then
$\lambda= 1 ~(\!\!\!\mod ~\pi)$, and the dynamics
$(\mathbb{P}^{1}(\Qp), \phi)$ is decomposed into
$2p^{(v_{\pi}(\lambda^{p}-1)-3)/2}$ minimal subsystems, such that each minimal subsystem is conjugate to
the adding machine on the odometer $\Z_{(p_{s})}$ with $(p_{s})=(1,p,p^{2},\cdots).$  \\
 \indent \mbox{\rm (2)} \ If $|a+d|_{p}< |\sqrt{\Delta}|_{p}$, then
$\lambda=- 1 ~(\!\!\!\mod ~\pi)$, and the dynamics
$(\mathbb{P}^{1}(\Qp), \phi)$ is decomposed into
$p^{(v_{\pi}(\lambda^p+1)-3)/2}$ minimal subsystems, such that each minimal subsystem is  conjugate to
the adding machine on the odometer $\Z_{(p_{s})}$ with $(p_{s})=(2,2p,2p^{2},\cdots).$
\end{theorem}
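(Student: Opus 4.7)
The plan is to reduce to a translation on a one-dimensional compact $p$-adic Lie group. By the conjugacy preceding the theorem, $(\P, \phi)$ is conjugate to $(g(\P), \psi)$ with $\psi(y) = \lambda y$. Unfolding the Galois-invariance condition $\sig(g^{-1}(y)) = g^{-1}(y)$ shows that
\[
g(\P) = \{y \in K^{*} : y\,\sig(y) = 1\},
\]
the norm-one subgroup of $K^{*}$. Since $\sig(\lambda) = 1/\lambda$, we have $\lambda \in g(\P)$; thus $(g(\P), \psi)$ is a translation on a compact abelian group, whose minimal components are exactly the cosets of $H := \overline{\langle \lambda \rangle}$, each topologically conjugate to $H$ with its $\lambda$-translation (an adding machine on a profinite group).

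The structure of $g(\P)$ is read from the filtration $g(\P)_{k} := g(\P) \cap (1 + \pi^{k} \O)$. Writing $y = 1 + \pi^{k} z$ and expanding $y\,\sig(y) = 1$ using $\sig(\pi) = -\pi$, together with the fact that $\sig$ acts trivially on the residue field $\mathbb{F}_{p}$, one checks (for $p \ge 3$) that
\[
g(\P)_{2j} = g(\P)_{2j+1}, \qquad [g(\P)_{2j-1} : g(\P)_{2j+1}] = p,
\]
and that reduction modulo $\pi$ maps $g(\P)$ onto $\{\pm 1\}$, producing $g(\P) = g(\P)_{1} \sqcup (-1)\,g(\P)_{1}$. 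In particular $[g(\P)_{1} : g(\P)_{2j+1}] = p^{j}$, and every $y \in g(\P)_{1}\setminus\{1\}$ has odd $\val(y-1)$.

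I would then distinguish the two cases by computing $\lambda - 1 = 2\sqrt{\Delta}/(a+d-\sqrt{\Delta})$ and $\lambda + 1 = 2(a+d)/(a+d-\sqrt{\Delta})$. In Case (1), $|a+d|_{p} > |\sqrt{\Delta}|_{p}$ forces $\lambda \equiv 1 \pmod{\pi}$ with $\val(\lambda-1) = v_{p}(\Delta) - 2v_{p}(a+d)$ odd and positive, so $H \subset g(\P)_{1}$ and the number of minimal components is $2\cdot[g(\P)_{1}:H]$. In Case (2), $|a+d|_{p} < |\sqrt{\Delta}|_{p}$ forces $\lambda \equiv -1 \pmod{\pi}$ with $\val(\lambda+1)$ odd and positive; then $\lambda^{2} \in g(\P)_{1}$ and the second isomorphism theorem, using $g(\P) = g(\P)_{1}\cdot H$ and $g(\P)_{1}\cap H = \overline{\langle \lambda^{2}\rangle}$, gives $g(\P)/H \cong g(\P)_{1}/\overline{\langle \lambda^{2}\rangle}$.

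The proof concludes by computing $[g(\P)_{1}: \overline{\langle \mu\rangle}]$ for $\mu \in g(\P)_{1}$ with $m := \val(\mu-1)$ odd. A short binomial expansion shows that the classes of $\mu, \mu^{2}, \ldots, \mu^{p-1}$ exhaust $g(\P)_{m}/g(\P)_{m+2} \cong \mathbb{F}_{p}$, so $[\overline{\langle \mu\rangle} : \overline{\langle \mu^{p}\rangle}] = p$. By Remark~\ref{remark} (for $e=2,\, p\ge 3$) the subsequent jumps $\val(\mu^{p^{k+1}}-1) - \val(\mu^{p^{k}}-1)$ all equal $2$ for $k \ge 1$, hence $\overline{\langle \mu^{p}\rangle}$ coincides with $g(\P)_{\val(\mu^{p}-1)}$, and
\[
[g(\P)_{1}:\overline{\langle \mu\rangle}] = p^{(m-1)/2}\cdot p^{(\val(\mu^{p}-1) - m - 2)/2} = p^{(\val(\mu^{p}-1) - 3)/2}.
\]
Applied with $\mu = \lambda$ in Case (1) and $\mu = \lambda^{2}$ in Case (2) (noting $\val(\lambda^{2p}-1) = \val(\lambda^{p}+1)$ since $\val(\lambda^{p}-1)=0$ when $\lambda \equiv -1$), this yields the asserted counts. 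The odometer $(p_{s})$ is read from the chain $H \supset \overline{\langle \mu^{p}\rangle} \supset \overline{\langle \mu^{p^{2}}\rangle} \supset \cdots$ of consecutive index $p$, giving $(1, p, p^{2}, \ldots)$ in Case (1); in Case (2) the coset $\lambda\overline{\langle \lambda^{2}\rangle}$ contributes an extra initial factor of $2$, yielding $(2, 2p, 2p^{2}, \ldots)$. The main technical content is the norm-filtration lemma for $g(\P)_{k}$ in the second paragraph; once in hand, the count reduces to index arithmetic.
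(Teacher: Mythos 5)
Your proof is correct, and it takes a genuinely different and more conceptual route than the paper's. The paper works through the cycle/level machinery on $\SP(0,1)$: Proposition~\ref{affine} with Remark~\ref{remark} decomposes $\SP(0,1)$ under $\psi(y)=\lambda y$ into clopen sets of a given type, Lemmas~\ref{ramify1}--\ref{ramify3} count those meeting $g(\P)$, and minimality is built up level-by-level via Lemma~\ref{minimal-part-to-whole}. You instead identify $g(\P)$ as the norm-one subgroup $\{y\in K^*: y\,\sigma(y)=1\}$ of $K^*$ (using $\sigma(x_1)=x_2$, $\sigma(x_2)=x_1$ and that $\P$ is the $\sigma$-fixed locus in $\mathbb{P}^1(K)$), note that $\sigma(\lambda)=1/\lambda$ places $\lambda$ in this group, and conclude that $\psi$ is a group rotation: the minimal components are exactly the cosets of $H=\overline{\langle\lambda\rangle}$, each conjugate to the rotation on the profinite group $H$, i.e.\ an adding machine. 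The whole count then becomes index arithmetic in the Galois filtration $g(\P)_k=g(\P)\cap(1+\pi^k\O)$; the paper's disk-counting lemmas are absorbed into your filtration lemma (odd-step jumps of index $p$, even-step equalities, residue image $\{\pm 1\}$). What your approach buys is brevity and a structural explanation of the formula; what the paper's buys is uniformity with the treatment of the other cases. Minor points: the factor $p^{(\val(\mu^p-1)-m-2)/2}$ in your index computation is identically $1$, since Remark~\ref{remark} gives $\val(\mu^p-1)=m+2$ for $p\ge 3$, $e=2$, so the index is simply $p^{(m-1)/2}$ with $m=\val(\mu-1)$ odd (you wrote the longer form only to match the paper's expression, which is fine); the list should be $1,\mu,\ldots,\mu^{p-1}$ rather than $\mu,\ldots,\mu^{p-1}$, though this is anyway automatic since a nontrivial element of a group of prime order generates it; and it is worth recording explicitly that $g(\P)_1\cong\Z_p$ (the norm map on principal units has $\Z_p$-rank one, so its kernel has rank $d-1=1$, and the filtration with consecutive index $p$ then pins down the isomorphism), which is what makes the closed subgroups of $g(\P)_1$ linearly ordered and gives $\overline{\langle\mu\rangle}=g(\P)_{\val(\mu-1)}$ directly.
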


As  consequences of Theorems \ref{mindecunrami} and \ref{rami-decomposition-p>2}, we find conditions under which the whole space is minimal and describe the corresponding dynamical structure.
\begin{corollary}
%Let $\phi(z)=\frac{ax+b}{cx+d}\in \Qp(x)$ with $ad-bc\neq 0$ and $c\neq 0$.
The system $(\P,\phi)$ is minimal if and only if one of the following conditions is satisfied:
\begin{itemize}
  \item[\rm{(1)}] $\Qp(\sqrt{\Delta})$ is an unramified quadratic extension
of $\Qp$, $\ell=p+1$ and $v_{p}(\lambda^{\ell}-1)=1$;
  \item[\rm{(2)}] $\Qp(\sqrt{\Delta})$ is a ramified  quadratic extension of $\Qp$ and $\val(\lambda^{p}+1)=3$.
\end{itemize}
\end{corollary}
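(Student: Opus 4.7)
The plan is to read off the corollary directly from the two decomposition theorems: the system $(\mathbb{P}^{1}(\Q_p), \phi)$ is minimal if and only if the number of minimal components in the decomposition equals one. I would first dispense with the case where $\lambda$ is a root of unity: then every orbit is finite, and since $\mathbb{P}^{1}(\Q_p)$ is uncountable, the system cannot be minimal. So I may assume $\lambda$ is not a root of unity, which is the standing hypothesis of Theorems \ref{mindecunrami} and \ref{rami-decomposition-p>2}.

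For the unramified case, Theorem \ref{mindecunrami} gives the count
\[
N_{\rm unr} \;=\; \frac{(p+1)\, p^{\,v_p(\lambda^\ell-1)-1}}{\ell}.
\]
Since $\ell \mid (p+1)$, both factors $(p+1)/\ell$ and $p^{\,v_p(\lambda^\ell-1)-1}$ are positive integers. Their product equals $1$ if and only if each equals $1$, which forces $\ell = p+1$ and $v_p(\lambda^\ell - 1) = 1$. This yields condition (1) of the corollary.

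For the ramified case, I would analyze the two sub-cases of Theorem \ref{rami-decomposition-p>2} separately. In sub-case (1), $|a+d|_p > |\sqrt{\Delta}|_p$, the component count is $2\, p^{(v_\pi(\lambda^p-1)-3)/2}$; since $p \geq 3$, this integer is always at least $2$, so minimality on the whole space is impossible here. In sub-case (2), $|a+d|_p < |\sqrt{\Delta}|_p$, the count is $p^{(v_\pi(\lambda^p+1)-3)/2}$, and this equals $1$ precisely when $v_\pi(\lambda^p+1) = 3$, yielding condition (2).

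There is essentially no obstacle: the argument is a direct inspection of the two counting formulas. The only point demanding a line of comment is that sub-case (1) of the ramified situation is excluded by the constant factor $2$ in front of $p^{(v_\pi(\lambda^p-1)-3)/2}$, which could never be cancelled for $p \geq 3$. Combining the three analyses proves the equivalence, and the minimality of $(\mathbb{P}^{1}(\Q_p),\phi)$ occurs exactly under conditions (1) and (2).
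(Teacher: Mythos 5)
Your proposal is correct and follows the paper's intended reading: the corollary is presented as an immediate consequence of Theorems \ref{mindecunrami} and \ref{rami-decomposition-p>2}, and you simply set the respective component counts equal to $1$. Your preliminary observation (that a root of unity $\lambda$ forces all orbits to be finite and hence rules out minimality on the uncountable space $\mathbb{P}^1(\Q_p)$) is a sensible way of noting that the corollary inherits the standing hypothesis of the two theorems, and the exclusion of sub-case (1) in the ramified situation via the leading factor $2$ is exactly the right point to flag. One cosmetic remark: in that sub-case the constant $2$ alone already makes the count at least $2$, independently of $p\geq 3$, so the appeal to $p\geq 3$ is unnecessary there.
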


\begin{corollary}
If the  system $(\P,\phi)$ is minimal, then $(\P,\phi)$ is topologically conjugate to the adding machine on  the odometer $\mathbb{Z}_{(p_s)}$  with
$$(p_s)=(2,2p, 2p^2,\cdots) \quad \mbox{~~~or~~~} \quad (p_s)=\big(p+1,(p+1)p, (p+1)p^2,\cdots\big).$$
\end{corollary}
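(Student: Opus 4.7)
The plan is to derive this corollary by assembling information already contained in the preceding corollary together with Theorems \ref{mindecunrami} and \ref{rami-decomposition-p>2}; no new computation should be required.

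First I would invoke the preceding corollary to dichotomize the hypothesis of minimality of $(\P,\phi)$ into the two stated scenarios: either (i) $K=\Qp(\sqrt{\Delta})$ is unramified with $\ell=p+1$ and $v_p(\lambda^\ell-1)=1$, or (ii) $K$ is ramified with $v_\pi(\lambda^p+1)=3$. In both scenarios the whole space $\P$ is the unique minimal component, so the desired conjugacy is read off from the odometer description of a single minimal subsystem.

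In scenario (i), Theorem \ref{mindecunrami} asserts that any minimal subsystem is topologically conjugate to the adding machine on $\mathbb{Z}_{(p_s)}$ with $(p_s)=(\ell,\ell p,\ell p^2,\ldots)$. Since minimality of the whole space forces $\ell=p+1$, substituting yields the odometer $(p+1,(p+1)p,(p+1)p^2,\ldots)$, the second one listed. In scenario (ii), I first rule out subcase (1) of Theorem \ref{rami-decomposition-p>2}: the count of minimal components in that subcase is $2p^{(v_\pi(\lambda^p-1)-3)/2}$, which is at least $2$ and hence incompatible with minimality of $(\P,\phi)$. So we must be in subcase (2), and Theorem \ref{rami-decomposition-p>2}(2) then states that the single minimal subsystem is topologically conjugate to the adding machine on $\mathbb{Z}_{(p_s)}$ with $(p_s)=(2,2p,2p^2,\ldots)$, giving the first listed odometer.

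The main (and essentially only) point requiring attention is the exclusion of subcase (1) in the ramified case, which follows immediately from the parity obstruction noted above. Everything else is a direct reading of the previously proved theorems, so this corollary is really a bookkeeping statement packaging the two surviving possibilities.
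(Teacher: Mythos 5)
Your proposal is correct and follows exactly the route the paper intends: the corollary is stated as a direct consequence of Theorems \ref{mindecunrami} and \ref{rami-decomposition-p>2} (together with the preceding corollary characterizing minimality), and your reading off of the odometers, including the parity/factor-of-$2$ observation that excludes subcase (1) of the ramified case, is the intended bookkeeping.
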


When $p=2$, there are three different situations.
\begin{theorem}\label{decomp-p=2-unrami}
Assume $p=2$ and $K=\Q_2(\sqrt{-3})$.  Then the dynamical system $(\mathbb{P}^{1}(\Q_2),\phi)$ is decomposed into  $3\cdot2^{v_{2}(\lambda^{2\ell}-1)-2 }/\ell$ minimal subsystems. Moreover, each minimal system is conjugate to  the
adding machine on the odometer $\mathbb{Z}_{(p_s)}$  with $(p_s)=(\ell,\ell 2,\ell 2^2,\cdots)$.
\end{theorem}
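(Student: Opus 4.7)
The proof will parallel that of Theorem \ref{mindecunrami}, with careful attention to the $p=2$ peculiarities. First, via the conjugacy $g(x)=(x-x_2)/(x-x_1)$ the system $(\mathbb{P}^1(K),\phi)$ becomes $(\mathbb{P}^1(K),\psi)$ with $\psi(x)=\lambda x$. Since $x_2=\overline{x_1}$ in $K=\Q_2(\sqrt{-3})$, a direct computation gives
$\overline{g(x)}=(x-x_1)/(x-x_2)=g(x)^{-1}$ for $x\in \Q_2$, hence $g(\mathbb{P}^1(\Q_2))=G:=\{z\in K^*:z\overline{z}=1\}$ (with $g(\infty)=1\in G$). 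Because $N(\lambda)=\lambda\overline{\lambda}=1$, the multiplication $\psi$ preserves $G$, so it suffices to describe the minimal decomposition of $(G,\psi)$.

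Since $K/\Q_2$ is unramified with uniformizer $\pi=2$ and residue field $\mathbb{F}_4$, the unit group splits as $\mathbb{U}=\mu_3\cdot(1+2\mathcal{O}_K)$; the norm is trivial on $\mu_3$ (as $x\overline{x}=x^{1+2}=x^3=1$ in $\mathbb{F}_4^*$), so $G=\mu_3\cdot(G\cap (1+2\mathcal{O}_K))$ with the second factor a 1-dimensional $\Z_2$-module. At level $1$ the reduction of $G$ is all of $\mathbb{F}_4^*$, and $\bar\lambda$ acts on $\mathbb{F}_4^*$ by multiplication, producing $3/\ell$ cycles of length $\ell\in\{1,3\}$. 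For higher levels, I would apply the induced-dynamics framework of Section \ref{DynExt}, computing $a_n,b_n$ from (\ref{def-an})--(\ref{def-bn}) along these cycles and then invoking the linearization (\ref{linearization}), but now restricted to representatives of $G$ (using Lemma \ref{unrami-number} to identify the relevant $K$-disks with those that meet $\Q_2$).

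The decisive new point is the characteristic-2 phenomenon: since $1=-1$ in $\mathbb{F}_4$, the relation $\lambda^\ell\equiv 1\pmod\pi$ forces both $\lambda^\ell-1$ and $\lambda^\ell+1$ to vanish modulo $\pi$, so that $v_2(\lambda^{2\ell}-1)=v_2(\lambda^\ell-1)+v_2(\lambda^\ell+1)\ge 2$. Writing $\lambda^\ell=1+2u$ and expanding $\lambda^{2\ell}=1+4u(1+u)$, one checks that it is $v_2(\lambda^{2\ell}-1)$, not $v_2(\lambda^\ell-1)$, that governs the transition between splitting and growing. By induction on the level (analogous to Proposition \ref{affine}, but for the rank-one $\Z_2$-dynamics on $G$), every $\ell$-cycle at level $1$ splits exactly $v_2(\lambda^{2\ell}-1)-2$ times---each splitting doubling the count because only $2$ of the $p^f=4$ lifts of a point lie inside $G$---and afterwards enters the regular regime in which a cycle of length $2^k\ell$ at level $n$ lifts to a single cycle of length $2^{k+1}\ell$ at level $n+1$. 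Multiplying yields the stated count $3\cdot 2^{v_2(\lambda^{2\ell}-1)-2}/\ell$, and Lemma \ref{minimal-part-to-whole} together with the regular-regime growth gives the odometer $(\ell,2\ell,4\ell,\dots)$ on each component.

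The main obstacle I anticipate is the careful bookkeeping of lifts within the codimension-one subgroup $G\subset\mathbb{U}$: one must verify both that a splitting cycle on $G$ produces $2$ (rather than $4$) descendant cycles and that the valuation controlling regime changes is indeed $v_2(\lambda^{2\ell}-1)$ and not $v_2(\lambda^\ell-1)$. Once this is in place, the rest is a routine iteration of the Taylor-expansion computation of Section \ref{DynExt}.
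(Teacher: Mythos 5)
Your proposal is correct and, at bottom, follows the same route as the paper's: conjugate by $g$ to $\psi(x)=\lambda x$ on $\SP(0,1)$, use the unramified disk-splitting count (Lemma~\ref{unrami-number}, packaged here as Lemma~\ref{p=2unrami}) to track which sub-disks meet $g(\P)$, and run the split/grow machinery of Section~\ref{DynExt}. What you add is the pleasant observation that $g(\P)$ is exactly the norm-one group $G=\{z\in K^*:z\bar z=1\}$; the paper only records $g(\P)\subset\SP(0,1)$ and proceeds disk by disk. Your reformulation is conceptually cleaner, but the quantitative input --- exactly two of the four lifts of a $G$-intersecting disk again meet $G$ --- is still exactly Lemma~\ref{p=2unrami}, so the argument is not substantively shortened.

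One place to tighten is your description of the split sequence. The cycle does not simply split $v_2(\lambda^{2\ell}-1)-2$ times and then enter the growing regime. Proposition~\ref{affine} (with $e=1$, $p=2$) gives type $(\ell,\vec{E})$ at level $v_2(\lambda^\ell-1)$ with $\vec{E}=(v_2(\lambda^\ell+1),1,1,\dots)$: the $\ell$-cycle splits $v_2(\lambda^\ell-1)-1$ times, then \emph{grows} once to a $2\ell$-cycle, then splits a further $v_2(\lambda^\ell+1)-1$ times, and only then grows at every subsequent level. Because $(\lambda^\ell+1)-(\lambda^\ell-1)=2$, one of $v_2(\lambda^\ell\pm 1)$ is forced to equal $1$, so one of the two split-blocks is empty; hence your total count $v_2(\lambda^{2\ell}-1)-2$ and the odometer $(\ell,2\ell,4\ell,\dots)$ come out right. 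But the intervening grow can precede the nontrivial block (namely when $v_2(\lambda^\ell-1)=1$), so the phrase ``splits $\cdots$ times and afterwards enters the regular regime'' should be reworded. This is expository rather than a gap; the count and the conjugacy type you state are correct.
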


\begin{theorem}\label{decompositionsqrt2}
Assume $p=2$ and $K=\Q_2(\sqrt{2}),\Q_2(\sqrt{-2}),\Q_2(\sqrt{6})$ or $\Q_2(\sqrt{-6})$.
 Since  $|\sqrt{\Delta}|_{2}\neq |a+d|_{2}$,
 we distinguish the following two cases.\\
\indent {\rm (1)} If $|a+d|_{2}> |\sqrt{\Delta}|_{2}$, then the dynamical system $(\mathbb{P}^{1}(\Q_{2}),\phi)$ is decomposed into $2^{(v_{\pi}(\lambda-1)-1)/2}$ minimal subsystems. \\
\indent {\rm (2)} If $|a+d|_{2}< |\sqrt{\Delta}|_{2}$, then the dynamical system $(\mathbb{P}^{1}(\Q_{2}),\phi)$ is decomposed into $2^{(v_{\pi}(\lambda+1)-1)/2}$ minimal subsystems.\\
In both cases, each minimal system is conjugate to  the
adding machine on the odometer $\mathbb{Z}_{(p_s)}$  with $(p_s)=(1, 2, 2^2,\cdots)$.
\end{theorem}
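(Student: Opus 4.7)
The plan has three stages: reduce via conjugation to the multiplication $\psi(x)=\lambda x$ on $\mathbb{P}^{1}(K)$, analyze the dynamics on the norm-one subset $N$ that is the image of $\mathbb{P}^{1}(\Q_2)$, and count minimal components via a local $\Q_2$-parametrization of $N$.

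\textbf{Reduction.} The M\"obius transformation $g(x)=(x-x_{2})/(x-x_{1})$ conjugates $(\mathbb{P}^{1}(K),\phi)$ to $(\mathbb{P}^{1}(K),\psi)$. Because $x_{2}=\sigma(x_{1})$ for the non-trivial Galois automorphism $\sigma$ of $K/\Q_{2}$, one has $\sigma(g(z))=g(z)^{-1}$ for every $z\in\Q_{2}$, so $g$ restricts to a bijection $\mathbb{P}^{1}(\Q_{2})\to N:=\{w\in K:w\sigma(w)=1\}$ (with $g(\infty)=1\in N$). The task becomes describing the $\psi$-minimal decomposition of $N$.

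\textbf{Arithmetic of $\lambda$ and decomposition of $\mathbb{U}$.} Writing $\sqrt{\Delta}=b\sqrt{i}$ with $b\in\Q_{2}^{*}$ gives $v_{\pi}(\sqrt{\Delta})=1+2v_{2}(b)$, which is odd for each $K=\Q_{2}(\sqrt{i})$, $i\in\{\pm 2,\pm 6\}$. Combined with $v_{\pi}(a+d)=2v_{2}(a+d)$ and $v_{\pi}(2)=2$, the identities $\lambda-1=2\sqrt{\Delta}/(a+d-\sqrt{\Delta})$ and $\lambda+1=2(a+d)/(a+d-\sqrt{\Delta})$ yield: in case~(1), $v_{\pi}(\lambda-1)$ is odd with $v_{\pi}(\lambda+1)=2$; in case~(2), $v_{\pi}(\lambda+1)$ is odd and $\geq 3$ with $v_{\pi}(\lambda-1)=2$. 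Since the residue field is $\mathbb{F}_{2}$, the order $\ell=1$ throughout, and Proposition~\ref{affine} applied to $\psi$ decomposes $\mathbb{U}$ in case~(1) into $2^{v_{\pi}(\lambda-1)-1}$ $\psi$-invariant closed $K$-disks of radius $2^{-v_{\pi}(\lambda-1)/2}$. In case~(2), $\psi$ satisfies $\psi(x)\equiv -x\pmod{\pi^{v_{\pi}(\lambda+1)}}$ and interchanges disks around $x_{0}$ and $-x_{0}$ of radius $<1/2$; I would therefore apply Proposition~\ref{affine} to $\psi^{2}$ (with $v_{\pi}(\lambda^{2}-1)=2+v_{\pi}(\lambda+1)$) and pair the resulting disks into pairs $\{D,-D\}$ interchanged by $\psi$, producing the $\psi$-invariant partition.

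\textbf{Intersection with $N$ and identification of minimal components.} I would parametrize $N$ locally near any $w_{0}\in N$ by writing $w=w_{0}(1+u+v\pi)$ with $u,v\in\Q_{2}$; the norm-one condition reduces to $2u+u^{2}-iv^{2}=0$, solved uniquely by Hensel's lemma as $u=u(v)=O(v^{2})$, giving $|w-w_{0}|_{p}=|\pi|_{p}\cdot|v|_{p}$. Thus $v\in\Q_{2}$ parametrizes $N$ locally and the $K$-metric on $N$ equals $|\pi|_{p}=2^{-1/2}$ times the $\Q_{2}$-metric on $v$. Consequently a closed $K$-disk of radius $r$ that meets $N$ corresponds to a $\Q_{2}$-disk of radius $r/|\pi|_{p}$, and the number of such $K$-disks that actually meet $N$ is $|\pi|_{p}/r$: this gives $2^{(v_{\pi}(\lambda-1)-1)/2}$ in case~(1) and, after dividing by the $\{D,-D\}$-pairing, $2^{(v_{\pi}(\lambda+1)-1)/2}$ in case~(2). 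Each non-empty intersection $D\cap N$ is a single minimal subsystem of $\psi|_{N}$: on the $v$-parameter $\psi$ becomes an affine map $v\mapsto v+\beta+O(v)$ with $|\beta|_{2}$ equal to the radius of the parameter disk, and by Fan--Fares~\cite{Fan-Fares11} such a translation is conjugate to the adding machine on $\Z_{2}=\Z_{(1,2,4,\ldots)}$.

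The main obstacle is the intersection count in the third step: one must ensure uniformity of the local parametrization across all of $N$, handle the $\{D,-D\}$-pairing in case~(2) so that the count halves exactly, and carefully track the factor $|\pi|_{p}$ so that the claimed exponents $(v_{\pi}(\lambda\mp 1)-1)/2$ emerge precisely.
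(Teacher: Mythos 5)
Your first two stages coincide with the paper's own proof: the same conjugation $g(x)=(x-x_2)/(x-x_1)$ to the multiplication $\psi(x)=\lambda x$, the same valuation computations for $\lambda\mp 1$, and the same appeal to Proposition \ref{affine} to decompose the unit sphere into $\psi$-invariant clopen sets. (The paper handles case (2) by reading off the type $(1,\vec{E})$ with $\vec{E}=(v_{\pi}(\lambda+1),2,2,\cdots)$ at level $2$ rather than passing to $\psi^2$ and pairing, but these are the same computation; note only that the correct pairing is $\{D,\psi(D)\}$, since $\psi(D)\neq -D$ at the relevant radii.) Where you genuinely diverge is the counting step: the paper locates $g(\mathbb{P}^{1}(\Q_{2}))$ inside $K$ via the distance computation $d(x_1,\Q_2)=d(x_2,\Q_2)=2r_0$ (Lemma \ref{distancep=2Lemma}) and the combinatorics of disks in a ramified extension meeting $\Q_2$ (Lemma \ref{rami-number}, transported by $g$ into Lemma \ref{ramisqrt2}), whereas you parametrize the norm-one torus $N=g(\mathbb{P}^{1}(\Q_{2}))$ by a Hensel coordinate $v\in\Q_2$. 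That is a viable alternative, and your identification of $g(\mathbb{P}^1(\Q_2))$ with $N$ is a clean observation, but as written the device has gaps which are exactly what the paper's lemmas supply.

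First, the parametrization is only local: with $w=w_0(1+u+v\pi)$ one gets $|u|_2=|v|_2^2$, so $|w-w_0|_p=|\pi|_p\,|v|_2$ only for $|v|_2\le 1/2$; to turn this into a global count you still need a base case saying how $N$ sits in $\SP(0,1)$ at the top levels, concretely that $N\subset\CD(1,1/2)$ and that both sub-disks of radius $\sqrt2/4$ of $\CD(1,1/2)$ meet $N$ (parts (1)--(2) of Lemma \ref{ramisqrt2}, which rest on $d(x_1,\Q_2)=2r_0$). Second, the formula ``number of $K$-disks of radius $r$ meeting $N$ equals $|\pi|_p/r$'' is non-integral, hence false, at radii $r=2^{-n/2}$ with $n$ even; the correct statement is the alternation of Lemma \ref{ramisqrt2}(3): the count doubles only when passing from an odd level to an even one. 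You evaluate the formula only at odd levels, where it happens to be right, but the minimality proof must pass through every level: one needs that when a cycle grows (odd-to-even transition) both descendant disks in each parent meet $N$, and when it splits (even-to-odd) exactly one does, and then combine this with the $\psi$-invariance of $N$ and Lemma \ref{minimal-part-to-whole}. Third, ``in the $v$-coordinate $\psi$ is $v\mapsto v+\beta+O(v)$, apply Fan--Fares'' is not a direct citation, since Fan--Fares treats exact affine maps; you must either bound the error term uniformly or run the growing/splitting induction level by level as the paper does. None of these is fatal, but each must be filled in before the count $2^{(v_{\pi}(\lambda\mp1)-1)/2}$ and the odometer structure are actually established.
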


\begin{theorem}\label{decompositionsqrt3}
Assume $p=2$ and $K=\Q_2(\sqrt{-1}),$~or~$\Q_2(\sqrt{3})$.  We distinguish the following three cases.\\
\indent {\rm (1)} If $|a+d|_2=|\sqrt{\Delta}|_{2}$, the system $(\mathbb{P}^{1}(\Q_{2}) ,\phi)$ is decomposed into $2^{(v_{\pi}(\lambda^{2}+1)-2)/2}$ minimal subsystems.\\
\indent {\rm (2)} If $|a+d|_2>|\sqrt{\Delta}|_{2}$, the system $(\mathbb{P}^{1}(\Q_{2}),\phi)$ is decomposed into $2^{v_{\pi}(\lambda-1)/2}$ minimal subsystems.\\
\indent {\rm (3)} If $|a+d|_2<|\sqrt{\Delta}|_{2}$, the system $(\mathbb{P}^{1}(\Q_{2}),\phi)$ is decomposed into $2^{v_{\pi}(\lambda+1)/2}$ minimal subsystems.\\
In all the three cases, each minimal system
is conjugate to  the
adding machine on the odometer $\mathbb{Z}_{(p_s)}$  with $(p_s)=(1, 2, 2^2,\cdots)$.
\end{theorem}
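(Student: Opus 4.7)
The strategy follows the template of Theorems \ref{mindecunrami}--\ref{decompositionsqrt2}. First, via the M\"obius conjugation $g(x)=\frac{x-x_{2}}{x-x_{1}}$, the system $(\mathbb{P}^{1}(K),\phi)$ becomes $(\mathbb{P}^{1}(K),\psi)$ with $\psi(x)=\lambda x$, and using $\bar{x}_{1}=x_{2}$ a direct calculation yields
\[
g(\mathbb{P}^{1}(\Q_{2}))=\mathcal{N}:=\{y\in K^{*}:y\bar{y}=1\},
\]
a compact $\psi$-invariant subgroup of $\U$ (since $\lambda$ has norm $1$). Thus decomposing $(\mathbb{P}^{1}(\Q_{2}),\phi)$ into minimal components is equivalent to decomposing $(\mathcal{N},\psi)$.

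Second, I compute the valuations of $\lambda\pm 1$ from
\[
\lambda-1=\frac{2\sqrt{\Delta}}{a+d-\sqrt{\Delta}},\qquad \lambda+1=\frac{2(a+d)}{a+d-\sqrt{\Delta}},
\]
using $\val(2)=e=2$, the fact that $\val(\sqrt{\Delta})\in 2\Z$ (because $\sqrt{\Delta}=t\sqrt{i}$ for some $t\in\Q_{2}^{*}$ and $i\in\{-1,3\}$), and Proposition \ref{x-sqrt}(2), which gives $\val(u-\sqrt{i})=1$ whenever $|u|_{2}=1$. In Case (1) this produces $\val(\lambda-1)=\val(\lambda+1)=1$, so $\val(\lambda^{2}-1)=2$; the sharper invariant $\val(\lambda^{2}+1)=\val\bigl((\lambda-1)(\lambda+1)+2\bigr)$ is then an even integer $\ge 4$ (evenness because $N(\lambda^{2}+1)\in\Q_{2}^{*}$ has $v_{p}$-valuation $\tfrac{1}{2}\val(\lambda^{2}+1)$). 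In Case (2), $\val(\lambda-1)=2+\val(\sqrt{\Delta})-\val(a+d)$ is a positive even integer while $\lambda+1$ is a unit; Case (3) is dual.

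Third, I apply Proposition \ref{affine} to describe the ``split-grow'' dynamics of $\psi$ on the unit sphere $\U$. In Case (1) one must first pass from $\psi$ to $\psi^{2}$, because $\val(\lambda-1)=1$ is too small and the first genuine splitting level appears at $\val(\lambda^{2}-1)=2$. In all three cases, since $\mathcal{N}$ is closed and $\psi$-invariant, any $\psi$-minimal component of $\U$ meeting $\mathcal{N}$ is contained in $\mathcal{N}$; the minimal decomposition of $\mathcal{N}$ is therefore obtained by counting those components of $\U$ that meet $\mathcal{N}$. This count is performed via the norm map $N:\U\to\Z_{2}^{*}$, whose image has index $2$ in $\Z_{2}^{*}$ since $K$ is ramified, combined with a residue analysis modulo the appropriate power of $\pi$; the three subcases then yield $2^{(\val(\lambda^{2}+1)-2)/2}$, $2^{\val(\lambda-1)/2}$ and $2^{\val(\lambda+1)/2}$ respectively. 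Finally, the tail pattern $\vec{E}=(2,2,\ldots)$ furnished by Proposition \ref{affine} with $e=2$, $f=1$ identifies each minimal component with the adding machine on the odometer $\Z_{(1,2,4,\ldots)}$.

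The principal obstacle is this counting step: in contrast with the earlier theorems of this section, $K$ is ramified with residue field $\mathbb{F}_{2}$, the Galois involution on $\mathcal{O}_{K}/\pi^{n}$ is nontrivial at every sufficiently deep level, and $\mathcal{N}$ interacts in a subtle way with the ball filtration of $\U$ used by Proposition \ref{affine}. The three subcases reflect genuinely different local geometries according to whether $|a+d|_{2}$ equals, exceeds, or is dominated by $|\sqrt{\Delta}|_{2}$; and in Case (1) the preliminary passage $\psi\mapsto\psi^{2}$ is unavoidable, because $\lambda$ and $-1$ are indistinguishable modulo $\pi^{2}$ in this extension.
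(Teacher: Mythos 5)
Your skeleton coincides with the paper's: conjugate by $g(x)=\frac{x-x_2}{x-x_1}$ to the multiplication $\psi(x)=\lambda x$, read off the grow/split pattern of $(\SP(0,1),\psi)$ from Proposition \ref{affine}, and count which pieces meet $g(\mathbb{P}^{1}(\Q_{2}))$. Two of your observations are genuinely nice additions: the identification $g(\mathbb{P}^{1}(\Q_{2}))=\mathcal{N}=\{y: y\bar y=1\}$ (which follows from $x_2=\bar x_1$ together with Hilbert 90 for surjectivity — you should say this), and the sharpened bound $\val(\lambda^{2}+1)\geq 4$ in Case (1), which is correct and stronger than the paper's $\geq 2$.

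Nevertheless there is a genuine gap, precisely at the step you yourself label ``the principal obstacle.'' First, the reduction ``any $\psi$-minimal component of $\U$ meeting $\mathcal{N}$ is contained in $\mathcal{N}$, so it suffices to count the components of $\U$ meeting $\mathcal{N}$'' does not do what you want. The clopen invariant sets of type $(k,\vec{E})$ furnished by Proposition \ref{affine} are \emph{not} minimal for $(\U,\psi)$ here: $\vec{E}$ has entries $\geq 2$, and each splitting produces two disjoint invariant clopen descendants. If instead you mean genuine minimal sets (orbit closures), your containment claim is true but circular — counting them is exactly the problem to be solved, and no clopen minimal set can meet $\mathcal{N}$ anyway since $\mathcal{N}$ has empty interior in $\U$. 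What must actually be proved is that for each type-$(k,\vec{E})$ clopen set $B$ meeting $g(\mathbb{P}^{1}(\Q_{2}))$, the intersection $B\cap g(\mathbb{P}^{1}(\Q_{2}))$ is a \emph{single} minimal set. Second, both this minimality and the three exponents rest entirely on the quantitative counting statement that the paper isolates as Lemma \ref{numbersqrt3or-1} (built on Lemma \ref{rami-number} and Proposition \ref{x-sqrt}): a ball of radius $2^{-n/2}$ meeting $g(\mathbb{P}^{1}(\Q_{2}))$ contains exactly one or exactly two sub-balls of radius $2^{-(n+1)/2}$ meeting it, according to the parity of $n$. One must then check, level by level and via Lemma \ref{minimal-part-to-whole}, that the splitting levels of the cycle pattern always land on the parity where only one sub-ball survives (giving minimality), while the factor $2$ at the other parity accumulates to $2^{(\val(\lambda^{2}+1)-2)/2}$, $2^{\val(\lambda-1)/2}$, $2^{\val(\lambda+1)/2}$ in the three cases. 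You assert this ``via the norm map and a residue analysis'' without performing it, so the formulas are not derived. (A minor further slip: evenness of $\val(\lambda^{2}+1)$ does not follow from $v_p(N(z))=\val(z)$, which only shows $\val(z)\in\Z$; it follows from the explicit formula $\lambda^{2}+1=2\big((a+d)^{2}+\Delta\big)/(a+d-\sqrt{\Delta})^{2}$.)
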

As a consequence of Theorems \ref{decomp-p=2-unrami}, \ref{decompositionsqrt2} and \ref{decompositionsqrt3}, we obtain necessary and sufficient conditions
under which the dynamics $(\mathbb{P}^{1}(\Q_{2}),\phi)$ is minimal.
\begin{corollary}
The system $(\mathbb{P}^{1}(\Q_{2}),\phi)$ is minimal if and only if one of the following conditions is satisfied:
\begin{itemize}
  \item[\rm{(1)}] $K=\Q_2(\sqrt{-3})$, $\ell=3$ and $v_{2}(\lambda^{2\ell}-1)=2$;
%  \item[\rm{(2)}] $K=\Q_2(\sqrt{\pm 2}),\Q_2(\sqrt{\pm6})$, $|a+d|_{2}<|\sqrt{\Delta}|_{2}$ and $v_{\pi}(\lambda-1)=1$
  %  \item[\rm{(3)}] $K=\Q_2(\sqrt{\pm 2}),\Q_2(\sqrt{\pm6})$, $|a+d|_{2}>|\sqrt{\Delta}|_{2}$ and $v_{\pi}(\lambda+1)=1$
  \item[\rm{(2)}] $K=\Q_2(\sqrt{-1})~
\mbox{ or }\Q_2(\sqrt{3})$,  $|a+d|_{2}=|\sqrt{\Delta}|_{2}$ and $v_{\pi}(\lambda^{2}+1)=2$.
%  \item[\rm{(5)}] $K=\Q_2(\sqrt{-1}),\Q_2(\sqrt{3})$, $|a+d|_{2}<|\sqrt{\Delta}|_{2}$ and $v_{\pi}(\lambda+1)=0$
%    \item[\rm{(6)}] $K=\Q_2(\sqrt{-1}),\Q_2(\sqrt{3})$, $|a+d|_{2}>|\sqrt{\Delta}|_{2}$ and $v_{\pi}(\lambda-1)=0$
\end{itemize}
\end{corollary}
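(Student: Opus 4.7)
The corollary is a direct consequence of Theorems \ref{decomp-p=2-unrami}, \ref{decompositionsqrt2}, and \ref{decompositionsqrt3}: the system $(\mathbb{P}^{1}(\Q_{2}),\phi)$ is minimal if and only if the number of minimal subsystems predicted by the theorem applicable to $K=\Q_2(\sqrt{\Delta})$ equals $1$. The plan is to impose this equation in each of the three families of quadratic extensions of $\Q_2$ and check which conditions survive.

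\emph{Unramified case} $K=\Q_2(\sqrt{-3})$. The residual field is $\mathbb{F}_4$, so the order $\ell$ of $\lambda$ in $\mathbb{K}^*$ divides $|\mathbb{F}_4^*|=3$, giving $\ell\in\{1,3\}$. By Theorem \ref{decomp-p=2-unrami} the number of minimal components is $3\cdot 2^{v_2(\lambda^{2\ell}-1)-2}/\ell$. If $\ell=1$, the equation $3\cdot 2^{v_2(\lambda^2-1)-2}=1$ has no integer solution. If $\ell=3$, the equation reduces to $v_2(\lambda^6-1)=2$, which is condition $(1)$.

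\emph{Ramified cases.} In both ramified families I use $\val(2)=2$, $\val(\sqrt{\Delta})=v_2(\Delta)$, and the identities
\[
\lambda-1=\frac{2\sqrt{\Delta}}{a+d-\sqrt{\Delta}},\qquad \lambda+1=\frac{2(a+d)}{a+d-\sqrt{\Delta}}.
\]
When $|a+d|_2>|\sqrt{\Delta}|_2$ the ultrametric law gives $\val(a+d-\sqrt{\Delta})=2v_2(a+d)$, so
\[
\val(\lambda-1)=2+v_2(\Delta)-2v_2(a+d)\geq 3,
\]
the strict inequality coming from $2v_2(a+d)<v_2(\Delta)$. Symmetrically, $\val(\lambda+1)\geq 3$ whenever $|a+d|_2<|\sqrt{\Delta}|_2$. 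Now for $K=\Q_2(\sqrt{\pm 2})$ or $\Q_2(\sqrt{\pm 6})$, $v_2(\Delta)$ is odd and so is $\val(\lambda\mp 1)$; hence the count $2^{(\val(\lambda\mp 1)-1)/2}$ in Theorem \ref{decompositionsqrt2} is $\geq 2$, ruling out minimality in this family entirely. For $K=\Q_2(\sqrt{-1})$ or $\Q_2(\sqrt{3})$, $v_2(\Delta)$ is even and $\val(\lambda\mp 1)$ is even, so being $\geq 3$ forces it to be $\geq 4$; the subcase counts $2^{\val(\lambda\mp 1)/2}$ in cases $(2)$ and $(3)$ of Theorem \ref{decompositionsqrt3} are thus $\geq 4$. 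Only the remaining subcase $|a+d|_2=|\sqrt{\Delta}|_2$ can produce a minimal system, where the count $2^{(\val(\lambda^2+1)-2)/2}$ equals $1$ exactly when $\val(\lambda^2+1)=2$, yielding condition $(2)$.

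\emph{Main obstacle.} The delicate step is the valuation bookkeeping ruling out the non-equal subcases. It is not enough to observe that $\lambda\equiv\pm 1\bmod\pi$ (which gives only $\val(\lambda\mp 1)\geq 1$); the additional push comes from combining the strict inequality between $|a+d|_2$ and $|\sqrt{\Delta}|_2$ (upgrading the bound to $>2$) with the parity of $v_2(\Delta)$ dictated by the specific extension (promoting $>2$ to $\geq 3$ in the odd-parity families and to $\geq 4$ in the even-parity ones), so that the exponents $(\val(\lambda\mp 1)-1)/2$ and $\val(\lambda\mp 1)/2$ are strictly positive and the counts exceed $1$.
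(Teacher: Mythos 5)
Your proposal is correct and follows exactly the route the paper intends: the corollary is stated there as an immediate consequence of Theorems \ref{decomp-p=2-unrami}--\ref{decompositionsqrt3}, and your argument simply sets each subsystem count equal to $1$ and carries out the valuation and parity bookkeeping (in particular the observations that $3\cdot 2^{k}\neq 1$, that $v_{\pi}(\lambda\mp 1)\geq 3$ in the strict-inequality subcases, and that parity of $v_2(\Delta)$ pushes this to $\geq 4$ for $\Q_2(\sqrt{-1}),\Q_2(\sqrt{3})$), all of which checks out against the formulas and the valuations computed in the proofs of those theorems.
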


In the following, we shall prove Theorems \ref{mindecunrami}-\ref{rami-decomposition-p>2} and Theorems \ref{decomp-p=2-unrami}-\ref{decompositionsqrt3}.
But before doing that, we will first compute the distances from the fixed points of $\phi$ to the
field $\mathbb{Q}_{p}$.

\subsection{Distances from the fixed points to $\Q_p$}
In the proofs of  Theorems \ref{mindecunrami}-\ref{rami-decomposition-p>2} and Theorems \ref{decomp-p=2-unrami}-\ref{decompositionsqrt3}, we need to know the distances from the fixed points to $\Q_p$.  What we have proved in Section \ref{distances} will be useful.

Recall that
\[x_{1}=\frac{a-d+\sqrt{\Delta}}{2c},\quad x_{2}=\frac{a-d-\sqrt{\Delta}}{2c},\]
and
\[ r_0=|x_1-x_2|_{p}=\left|\frac{\sqrt{\Delta}}{c}\right|_p. \]

\begin{lemma}\label{distance}
 Let $p\geq 3$. Then  $d(x_1,\Qp)= d(x_2,\Qp)
=r_{0}$.
\end{lemma}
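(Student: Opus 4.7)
The plan is to reduce the computation of $d(x_i,\Qp)$ to the already-established distance $d(\sqrt{\Delta},\Qp)$ via two invariance properties of the distance function: translation by elements of $\Qp$, and multiplication by units/scalars in $\Qp$.

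First I would write
\[
x_1 = \frac{a-d}{2c} + \frac{1}{2c}\sqrt{\Delta}, \qquad x_2 = \frac{a-d}{2c} - \frac{1}{2c}\sqrt{\Delta},
\]
and observe that $(a-d)/(2c)\in\Qp$ (using $p\geq 3$, so $|2|_p=1$ and $2\in\Qp^\times$). Since translation by an element of $\Qp$ leaves the distance to $\Qp$ invariant, this reduces the problem to computing $d\bigl(\sqrt{\Delta}/(2c),\,\Qp\bigr)$ and $d\bigl(-\sqrt{\Delta}/(2c),\,\Qp\bigr)$. Next, for any $y\in\Qp$ one has $|\sqrt{\Delta}/(2c)-y|_p = |2c|_p^{-1}|\sqrt{\Delta}-2cy|_p$, and as $y$ ranges over $\Qp$ so does $2cy$; hence taking the infimum gives
\[
d\bigl(\sqrt{\Delta}/(2c),\,\Qp\bigr) = \frac{1}{|c|_p}\, d(\sqrt{\Delta},\,\Qp).
\]
The same identity holds for $-\sqrt{\Delta}/(2c)$, since $-\Qp=\Qp$.

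Now I would invoke Corollary \ref{p>2distance} (which applies because $p\geq 3$ and $\sqrt{\Delta}\notin\Qp$) to conclude
\[
d(\sqrt{\Delta},\,\Qp) = |\sqrt{\Delta}|_p.
\]
Substituting back,
\[
d(x_1,\Qp) = d(x_2,\Qp) = \frac{|\sqrt{\Delta}|_p}{|c|_p} = \left|\frac{\sqrt{\Delta}}{c}\right|_p = r_0,
\]
which is the desired equality.

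There is really no serious obstacle here: the argument is a two-line manipulation once the earlier distance formula $d(\sqrt{\Delta},\Qp)=|\sqrt{\Delta}|_p$ (Corollary \ref{p>2distance}) is in hand. The only point that requires (minor) care is to note that $2\in\Qp^\times$ for $p\geq 3$, so dividing by $2c$ is a genuine $\Qp$-scaling and preserves the structure of the distance computation; this is precisely where the hypothesis $p\geq 3$ enters.
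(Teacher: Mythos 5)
Your proof is correct and takes essentially the same approach as the paper: translate by $(a-d)/(2c)\in\Qp$ to reduce to computing $d(\sqrt{\Delta}/(2c),\Qp)$, and then invoke Corollary \ref{p>2distance}. The only cosmetic difference is that the paper applies the corollary directly to $\sqrt{\Delta}/(2c)$ (viewing it as $\sqrt{\Delta/(4c^2)}$), whereas you first pull out the $\Qp$-scalar $1/(2c)$ and then apply the corollary to $\sqrt{\Delta}$ itself; this is just a rearrangement of the same ingredients (the scaling step is in fact Proposition \ref{prop3}).
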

\begin{proof}
Since $x_1 = \frac{a-d +\sqrt{\Delta}}{2c}$ with $\frac{a-d}{2c}\in
\Qp$, we have
$$d(x_1,\Qp)=\inf_{x\in \Qp}|x-x_1|_{p}=\inf_{x\in \Qp}|x-\sqrt{\Delta}/(2c)|=d(\sqrt{\Delta}/(2c),\Qp).$$
By Corollary \ref{p>2distance}, we get
$$
d(\Qp,x_1)=|\sqrt{\Delta}/(2c)|_{p}. $$
The same arguments apply to $x_2$ instead of $x_1$.

Notice that $|\sqrt{\Delta}/(2c)|_{p}=|\sqrt{\Delta}/c|_{p}$ when
$p\geq 3$.
So  $d(x_1,\Qp)= d(x_2,\Qp)=r_{0}$.
\end{proof}

\begin{lemma}\label{subsystem}
Let $p\geq 3$. We have\\
\indent {\rm (1)} \ \ $\Qp\subset K\setminus
(\mathbb{D}(x_1,r_0)\cup
\mathbb{D}(x_2,r_0))$;\\
\indent {\rm (2)} \ \ $g(\mathbb{P}^{1}(\Qp))\subset
\mathbb{S}(0,1)$.
\end{lemma}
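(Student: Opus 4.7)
The plan is to deduce both statements directly from the distance computation already established in Lemma \ref{distance}, namely $d(x_1,\Q_p)=d(x_2,\Q_p)=r_0$, together with standard ultrametric manipulations.

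For part (1), the argument is essentially immediate. Since $d(x_i,\Q_p)=r_0$ is defined as an infimum, every $y\in\Q_p$ satisfies $|y-x_i|_p\ge r_0$ for $i=1,2$. But the open disk $\mathbb{D}(x_i,r_0)$ consists precisely of points $x$ with $|x-x_i|_p<r_0$, so $\Q_p$ is disjoint from both $\mathbb{D}(x_1,r_0)$ and $\mathbb{D}(x_2,r_0)$.

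For part (2), the goal is to show $|g(y)|_p=1$ for every $y\in\mathbb{P}^1(\Q_p)$, i.e.\ $|y-x_2|_p=|y-x_1|_p$. First I handle the point at infinity separately: $g(\infty)=1\in\mathbb{S}(0,1)$. For finite $y\in\Q_p$, I will use $|x_1-x_2|_p=r_0$ and write $y-x_2=(y-x_1)+(x_1-x_2)$. Part (1) guarantees $|y-x_1|_p\ge r_0$, and I split into two cases. If $|y-x_1|_p>r_0$, the strong triangle inequality gives $|y-x_2|_p=|y-x_1|_p$. If $|y-x_1|_p=r_0$, then the ultrametric inequality gives $|y-x_2|_p\le r_0$, but part (1) applied to $x_2$ forces $|y-x_2|_p\ge r_0$, so equality holds. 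In either case $|g(y)|_p=1$.

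There is no real obstacle here; the only subtlety is remembering that $d(x_i,\Q_p)=r_0$ is achieved as an infimum and need not be strict, which is why a case split (rather than a one-line appeal to the strong triangle inequality) is needed in part (2). Since all the real work has been front-loaded into Lemma \ref{distance} via Corollary \ref{p>2distance}, this lemma is essentially an ultrametric bookkeeping exercise.
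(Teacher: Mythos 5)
Your proof is correct. Part (1) is exactly the paper's argument: it is an immediate restatement of Lemma \ref{distance}, since the open disk $\mathbb{D}(x_i,r_0)$ consists of points at distance strictly less than $r_0=d(x_i,\Qp)$ from $x_i$. For part (2) you take a different, more computational route than the paper. The paper invokes Proposition \ref{disk} to compute the images $g(\mathbb{D}(x_1,r_0))=\mathbb{P}^{1}(K)\setminus\overline{\mathbb{D}}(0,1)$ and $g(\mathbb{D}(x_2,r_0))=\mathbb{D}(0,1)$, and then concludes by complementation (using that $g$ is a bijection of $\mathbb{P}^1(K)$ and that $\mathbb{P}^1(\Qp)$ avoids both preimages by part (1)). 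You instead verify $|y-x_2|_p=|y-x_1|_p$ directly for every finite $y\in\Qp$ via the ultrametric isosceles property, splitting on whether $|y-x_1|_p>r_0$ or $=r_0$, and handle $g(\infty)=1$ separately; this is entirely valid, and your case split correctly addresses the only delicate point, namely that the infimum $r_0$ can be attained so the strong triangle inequality alone does not settle the boundary case. Your argument is more elementary and self-contained (it bypasses Proposition \ref{disk}), while the paper's version has the advantage of identifying the images of the two disks explicitly, which is reused later (e.g.\ in Lemmas \ref{unramify1} and \ref{induction}); for the purposes of this lemma alone the two are equally effective.
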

\begin{proof} The first assertion is a direct consequence of Lemma \ref{distance}.\\
%$\Qp\cap(\mathbb{D}(x_1,r_0)\cup\mathbb{D}(x_2,r_0))=\emptyset.$
For the second assertion, by Propositon \ref{disk}, we have
$$g(\mathbb{D}(x_1,r_0))= \mathbb{P}^{1}\setminus \overline{\mathbb{D}}(0,1) \mbox{~and~}g(\mathbb{D}(x_2,r_0))=\mathbb{D}(0,1).$$
So $g(\mathbb{P}^{1}(\Qp))\subset
\mathbb{S}(0,1)$.
\end{proof}

\begin{lemma}\label{distancep=2Lemma}
\begin{itemize}
  \item[\rm{(1)}] If $\Q_2(\sqrt{\Delta})=\Q_2(\sqrt{-3})$, then  $d(x_1,\Q_{2})= d(x_2,\Q_{2})=r_0$.
  \item[\rm{(2)}] If $\Q_2(\sqrt{\Delta})=\Q_2(\sqrt{2}),\Q_2(\sqrt{-2}),\Q_2(\sqrt{-6})$ or $\Q_2(\sqrt{6})$, then $$d(x_1,\Q_{2})= d(x_2,\Q_{2})=2r_0.$$
  \item[\rm{(3)}] If $\Q_2(\sqrt{\Delta})=\Q_2(\sqrt{-1})$~or~$\Q_2(\sqrt{3})$, then $d(x_1,\Q_{2})= d(x_2,\Q_{2})=\sqrt{2}r_0$.
\end{itemize}
\end{lemma}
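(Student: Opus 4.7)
The plan is to reduce each case to the single-square-root distance computations of Corollary \ref{distancep=2}, using Proposition \ref{prop3} to strip off the rational scaling factor $1/(2c)$.

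First, I observe that since $\tfrac{a-d}{2c}\in\Q_2$, translation by this rational element does not affect the distance to $\Q_2$, so
\[
d(x_1,\Q_2)\;=\;d\!\left(\tfrac{\sqrt{\Delta}}{2c},\Q_2\right),\qquad d(x_2,\Q_2)\;=\;d\!\left(-\tfrac{\sqrt{\Delta}}{2c},\Q_2\right)\;=\;d\!\left(\tfrac{\sqrt{\Delta}}{2c},\Q_2\right).
\]
Thus it suffices to compute $d(\sqrt{\Delta}/(2c),\Q_2)$. Next I rewrite $\sqrt{\Delta}/(2c)=\sqrt{\Delta/(4c^2)}$, and since the ratio $\sqrt{\Delta/(4c^2)}/\sqrt{\Delta}=1/(2c)$ lies in $\Q_2$, Proposition \ref{prop3} gives
\[
d\!\left(\tfrac{\sqrt{\Delta}}{2c},\Q_2\right)\;=\;\left|\tfrac{1}{2c}\right|_2\cdot d(\sqrt{\Delta},\Q_2)\;=\;\tfrac{2}{|c|_2}\cdot d(\sqrt{\Delta},\Q_2),
\]
where I used $|2|_2=1/2$.

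Now I split into the three listed cases and apply Corollary \ref{distancep=2}. In case (1), $\Q_2(\sqrt{\Delta})=\Q_2(\sqrt{-3})$ gives $d(\sqrt{\Delta},\Q_2)=\tfrac{1}{2}|\sqrt{\Delta}|_2$, so
\[
d(x_1,\Q_2)\;=\;\tfrac{2}{|c|_2}\cdot\tfrac{1}{2}|\sqrt{\Delta}|_2\;=\;\tfrac{|\sqrt{\Delta}|_2}{|c|_2}\;=\;r_0.
\]
In case (2), $\Q_2(\sqrt{\Delta})=\Q_2(\sqrt{\pm 2})$ or $\Q_2(\sqrt{\pm 6})$ gives $d(\sqrt{\Delta},\Q_2)=|\sqrt{\Delta}|_2$, producing $d(x_1,\Q_2)=2r_0$. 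In case (3), $\Q_2(\sqrt{\Delta})=\Q_2(\sqrt{-1})$ or $\Q_2(\sqrt{3})$ gives $d(\sqrt{\Delta},\Q_2)=\tfrac{\sqrt{2}}{2}|\sqrt{\Delta}|_2$, producing $d(x_1,\Q_2)=\sqrt{2}\,r_0$. The same computation applies verbatim to $x_2$.

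The only subtlety to be careful about is the factor of $2$ introduced by the denominator $2c$: since $|2|_2=1/2$, the scaling factor from Proposition \ref{prop3} is $2/|c|_2$ rather than $1/|c|_2$, and this precisely cancels (in case (1)) or modifies (in cases (2), (3)) the $1/2$ or $\sqrt{2}/2$ coming from Corollary \ref{distancep=2}. I expect no real obstacle beyond keeping track of these $2$-adic normalizations; the lemma is essentially a bookkeeping consequence of Proposition \ref{prop3} together with Corollary \ref{distancep=2}.
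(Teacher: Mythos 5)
Your proposal is correct and follows essentially the same route as the paper: both reduce to computing $d(\sqrt{\Delta}/(2c),\Q_2)$ by discarding the rational translate $\frac{a-d}{2c}$, and then invoke Corollary \ref{distancep=2}. The only cosmetic difference is that the paper applies Corollary \ref{distancep=2} directly to $\sqrt{\Delta}/(2c)=\sqrt{\Delta/(4c^2)}$, whereas you first peel off the factor $1/(2c)$ via Proposition \ref{prop3} and then apply the corollary to $\sqrt{\Delta}$; since the corollary is itself proved from Proposition \ref{prop3}, these are the same argument.
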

\begin{proof}
Since $x_1 = \frac{a-d +\sqrt{\Delta}}{2c}$ with $\frac{a-d}{2c}\in
\Q_{2}$, we have
$$d(x_1,\Q_{2})=\inf_{x\in \Q_{2}}|x-x_1|_{2}=\inf_{x\in \Q_{2}}|x-\sqrt{\Delta}/(2c)|=d(\sqrt{\Delta}/(2c),\Q_{2}).$$
\indent(1)  Assume $\Q_2(\sqrt{\Delta})=\Q_2(\sqrt{-3})$. By Corollary \ref{distancep=2}, we have
$$d(\sqrt{\Delta}/(2c),\Q_{2})=\frac{|\sqrt{\Delta}/(2c)|_{2}}{2}.$$
So $$d(x_1,\Q_{2})=\frac{|\sqrt{\Delta}/(2c)|_{2}}{2}=|\sqrt{\Delta}/c |_{2}=r_0.$$ \\
\indent(2) Assume $\Q_2(\sqrt{\Delta})=\Q_2(\sqrt{2}),\Q_2(\sqrt{-2}),\Q_2(\sqrt{-6})$ or $\Q_2(\sqrt{6})$.  By Corollary \ref{distancep=2}, we have
$$d(\sqrt{\Delta}/(2c),\Q_{2})=|\sqrt{\Delta}/(2c)|_{2}.$$
So
 $$d(x_1,\Q_{2})=\left|\frac{\sqrt{\Delta}}{2c}\right|_{2}=2r_{0}.$$
\\
\indent(3) Assume $\Q_2(\sqrt{\Delta})=\Q_2(\sqrt{-1})$~or~$\Q_2(\sqrt{3})$.  By Corollary \ref{distancep=2}, we have
$$d(\sqrt{\Delta}/(2c),\Q_{2})=\frac{\sqrt{2}|\sqrt{\Delta}/(2c)|_{2}}{2}.$$
So $$d(x_1,\Q_{2})=\frac{\sqrt{2}}{2}\left|\frac{\sqrt{\Delta}}{2c}\right|_{2}=\sqrt{2}r_{0}.$$
Applying same arguments to $x_2$ instead of $x_1$, we complete the proof.
\end{proof}

\subsection{Proof of Theorem \ref{mindecunrami}}
%In this section, we investigate the minimal decomposition of $\phi$  when $p\geq 3$.

Assume that $K=\Qp(\sqrt{\Delta})$ is an unramified quadratic  extension of $\Qp$. Then $|K^*|_{p}=|\Qp^*|_{p}$ and $p$ is a uniformizer of $K$.

\begin{lemma}\label{unramify1}
The unit sphere $\mathbb{S}(0,1)$ consists of $p^2-1$ disjoint closed disks with radius $1/p$, and there are
$(p+1)$ such disks which intersect $g(\P)$.
\end{lemma}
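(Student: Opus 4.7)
The plan is to partition $\SP(0,1)$ into $p^2-1$ closed sub-disks by residue classes mod $p$, identify $g(\P)$ intrinsically as the norm-one subgroup of $K^{*}$, and then count which residue classes actually meet that subgroup. Since $K$ is unramified over $\Qp$, $p$ is a uniformizer of $K$ and $\K\cong\mathbb{F}_{p^{2}}$, so $\CD(0,1)=\O$ decomposes as the disjoint union of $p^{2}$ closed disks of radius $1/p$, one for each residue class modulo $p$. Removing the class of $0$ exhibits $\SP(0,1)=\O\setminus p\O$ as the disjoint union of $p^{2}-1$ closed disks of radius $1/p$, indexed by $\mathbb{F}_{p^{2}}^{*}$, which yields the first assertion.

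To identify $g(\P)$, let $\sigma$ denote the nontrivial element of $\mathrm{Gal}(K/\Qp)$. Since $x_{1}$ and $x_{2}$ are the two $K$-roots of a polynomial over $\Qp$, we have $\sigma(x_{1})=x_{2}$ and $\sigma(x_{2})=x_{1}$. For $z\in\Qp$ one computes
\[
\sigma(g(z))=\frac{z-x_{1}}{z-x_{2}}=g(z)^{-1},
\]
so $g(z)\sigma(g(z))=1$; since $g(\infty)=1$ also satisfies this relation, $g(\P)$ is contained in the norm-one subgroup. For the reverse inclusion, the formula $g^{-1}(y)=(x_{1}y-x_{2})/(y-1)$ together with the requirement $g^{-1}(y)=\sigma(g^{-1}(y))$ cross-multiplies and simplifies to $(x_{1}-x_{2})(y\sigma(y)-1)=0$, which forces $y\sigma(y)=1$ since $x_{1}\neq x_{2}$. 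Thus
\[
g(\P)=\{y\in K^{*}:y\sigma(y)=1\},
\]
the kernel of the norm $N:K^{*}\to\Qp^{*}$.

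Finally, reduction modulo $p$ maps this kernel into $\ker\overline{N}$, where $\overline{N}:\mathbb{F}_{p^{2}}^{*}\to\Fp^{*}$ is given by $a\mapsto a^{p+1}$ (as $\sigma$ induces the Frobenius), a cyclic group of order $(p^{2}-1)/(p-1)=p+1$. For the converse, given $c\in\mathbb{F}_{p^{2}}^{*}$ with $\overline{N}(c)=1$, lift $c$ to $y_{0}\in\O^{*}$; then $N(y_{0})\in 1+p\Zp$. Because $p$ is odd, Lemma~\ref{solution} shows that every element of $1+p\Zp$ is a square in $\Qp$ with a square root again in $1+p\Zp$. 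Setting $\alpha:=\sqrt{N(y_{0})}\in 1+p\Zp$, the element $y:=y_{0}/\alpha$ satisfies $N(y)=1$ and $y\equiv c\pmod{p}$, confirming that each of the $p+1$ norm-one residue classes actually meets $g(\P)$. The main non-routine step I expect is the Galois-theoretic identification of $g(\P)$; once that characterization is in place, the counting reduces to standard facts about $\mathbb{F}_{p^{2}}^{*}$ and Hensel-type lifting.
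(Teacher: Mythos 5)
Your argument is correct, and it takes a genuinely different route from the paper. The paper proves this lemma geometrically: it pulls everything back through $g^{-1}$ to the disk $\overline{\mathbb{D}}(x_1,r_0)$, invokes Lemma \ref{unrami-number} to count the sub-disks of radius $r_0/p$ that meet $\Qp$, uses Lemma \ref{distance} to check that none of these coincides with $\overline{\mathbb{D}}(x_1,r_0/p)$ or $\overline{\mathbb{D}}(x_2,r_0/p)$, and then pushes forward with Proposition \ref{disk}. You instead identify $g(\P)$ intrinsically as the norm-one subgroup $\{y\in K^*: y\sigma(y)=1\}$ (your cross-multiplication identity is an equivalence, so it yields both inclusions even though you phrase only one direction), and then count residue classes via the surjectivity of the finite-field norm and a square-root lifting in $1+p\Zp$, valid since $p$ is odd. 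Your approach is more conceptual: it explains the number $p+1$ as the order of $\ker(\overline{N}:\mathbb{F}_{p^2}^*\to\mathbb{F}_p^*)$ and foreshadows why $\ell\mid(p+1)$ in Theorem \ref{mindecunrami}. The paper's approach is less structural but feeds directly into the inductive refinement (Lemma \ref{induction}) and transfers with minimal change to the ramified and $p=2$ cases, where the norm-one description would require tracking the filtration of the norm-one group by higher congruence subgroups. One small presentational point: state explicitly that $\sigma$ induces the Frobenius on the residue field because the extension is unramified (so $\sqrt{\Delta}\not\equiv-\sqrt{\Delta}$ modulo the maximal ideal); this is where unramifiedness enters your count.
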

\begin{proof}
By Proposition \ref{disk}, we have
  $$g(\PK\setminus \CD(x_1,r_0))=\CD(1,1/p)\subset \SP(0,1),$$
  $$g(\CD(x_1,r_0/p))=\PK\setminus \CD(0,1),$$
$$g(\CD(x_2,r_0/p))= \CD(0,1/p).$$
Observe that $\SP(0,1)$ consists of $p^{2}-1$ closed  disks of radius $1/p$, because $K$ is
an unramified quadratic extension of $\Qp$ and $\CD(0,1)=\SP(0,1)\cup\CD(0,1/p)$. In order to determine
the  number of those
 disks which intersect $g(\P)$, we are going to determine the number of closed
disks $\CD\subset \CD(x_{1},r_{0})$ of radius $r_{0}/p$ such that $\CD\cap\P\neq \emptyset.$
By Lemma \ref{unrami-number}, there are $p$  disks $\CD(a_{i},r_{0}/p)\subset\CD(x_{1},r_{0})$ where  $i=1,2,\cdots, p$, which intersect $\Qp$.
By Lemma \ref{distance}, we have $\CD(x_1,r_{0}/p)\cap \Q_{p}=\emptyset$ and $\CD(x_2,r_{0}/p)\cap \Q_{p}=\emptyset$. So $\CD(a_{i},r_{0}/p)\neq \CD(x_j,r_{0}/p)$ for all $1\leq i \leq p$ and $j=1 \mbox{~or~} 2$.

By Proposition \ref{disk}, each  $g(\CD(a_{i},r_{0}/p))\subset \SP(0,1)$ is a closed disk of radius $1/p$.
Obviously, $g(\CD(a_i,r))\cap g(\P)\neq \emptyset$. Since $g$ is a bijection from $\PK$ into itself,
$\CD(1,1/p), g(\CD(a_1,r_{0}/p)),\cdots,$ $ g(\CD(a_p,r_{0}/p))$ are the $p+1$ closed disks which intersect $g(\P)$.
%We decompose $\mathbb{P}^{1}(K)$ as $\CD(x_1,r_0)$ and $\PK\setminus \CD(x_1,r_0)$.
% We know that $\CD(x_1,r_{0})$ consists of $p^{2}$ closed disks of radius $r_{0}/p$ and there are $p$ such disks contain points of $\P$.
%Also $$g(\CD(x_{1},r_{0}/p))=\PK\setminus \CD(0,1),\\
%g(\CD(x_{1},r_{0}/p))=\CD(0,1/p).$$
%Let $\CD\subset\CD(x_1,r_{0})\setminus (\CD(x_{1},r_{0}/p)\cup \CD(x_{2},r_{0}/p))$ be a closed disk with radius $r_{0}/p$, then $g(\CD)\subset \SP(0,1)$ is a
%closed disk with radius $1/p$. So, there are $p$ closed disks with radius $1/p$ which contain points of $g(\CD(x_{0},r))$.
%By Proposition \ref{disk}, it is easy to verify:
%$$g(\PK\setminus \CD(x_1,r_0))=\CD(1,1/p).$$
%Since $g$ is an one to one map from $\PK$ to itself, we get our conclusion.
\end{proof}

\begin{lemma}\label{induction}
For each $n\geq 1$, let $\CD\subset \SP(0,1)$ be a closed disk with radius $1/p^{n}$. If $\CD\cap g(\P)\neq \emptyset$  and if we decompose $\CD$ into $p^{2}$
closed disk with radius $1/p^{n+1}$, then there are $p$ such disks which intersect $g(\P)$.
\end{lemma}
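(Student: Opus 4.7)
My plan is to pull the disk $\CD$ back through $g$. By Proposition \ref{disk}, $g$ is a bijection of $\PK$ that preserves the tree of $\PK$-disks, so the $p^2$ maximal proper sub-$\PK$-disks of $\CD$ (namely the disks of radius $1/p^{n+1}$) correspond bijectively to the $p^2$ immediate children of $g^{-1}(\CD)$ in this tree. It therefore suffices to count how many of those children meet $\P = \Qp \cup \{\infty\}$. I would split into cases according to whether $\CD$ contains the point $1 = g(\infty)$: since $\CD \subset \SP(0,1)$ has radius $1/p^n$, the only such disk is $\CD(1,1/p^n)$.

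In the easy case $\CD \neq \CD(1,1/p^n)$, the point $\infty$ does not lie in $g^{-1}(\CD)$, so $g^{-1}(\CD)$ is an ordinary closed disk $\CD(a,\rho)$ in $K$. Any point of $g^{-1}(\CD) \cap \P$ must then be in $\Qp$, and the hypothesis $\CD \cap g(\P) \neq \emptyset$ gives $\CD(a,\rho) \cap \Qp \neq \emptyset$. The $p^2$ sub-disks of $\CD(a,\rho)$ of radius $\rho/p$ are exactly the $g^{-1}$-images of the sub-disks of $\CD$, and Lemma \ref{unrami-number} says that exactly $p$ of them meet $\Qp$. So $p$ sub-disks of $\CD$ meet $g(\P)$.

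The main obstacle is the remaining case $\CD = \CD(1,1/p^n)$, in which $\infty \in g^{-1}(\CD)$; a direct computation using $|g(x) - 1|_p = r_0/|x - x_1|_p$ yields $g^{-1}(\CD) = \PK \setminus \CD(x_1, p^{n-1} r_0)$, which is not a disk in $K$, so Lemma \ref{unrami-number} cannot be applied to it directly. To handle this, I would decompose this $\PK$-disk into its $p^2$ immediate children: the larger complement $\PK \setminus \CD(x_1, p^n r_0)$, together with the $p^2 - 1$ closed disks of radius $p^{n-1}r_0$ lying in the sphere $\SP(x_1, p^n r_0)$. The first child contains $\infty \in \P$, so it meets $\P$; for the remaining ones, I would apply Lemma \ref{unrami-number} to the disk $\CD(x_1, p^n r_0)$, which meets $\Qp$ because by Lemma \ref{distance} we have $d(x_1, \Qp) = r_0 \leq p^n r_0$. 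The lemma gives exactly $p$ of its $p^2$ sub-disks of radius $p^{n-1} r_0$ meeting $\Qp$. The key observation is that one of these $p$ sub-disks is $\CD(x_1, p^{n-1} r_0)$ itself, which meets $\Qp$ because $r_0 \leq p^{n-1} r_0$ for $n \geq 1$, and this particular sub-disk is \emph{not} among the sphere pieces. Hence exactly $p - 1$ of the sphere pieces meet $\Qp$, and combined with the infinity piece this yields $1 + (p-1) = p$ sub-disks of $\CD$ meeting $g(\P)$, as required.
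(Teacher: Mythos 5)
Your proof is correct and follows essentially the same approach as the paper's: pull $\CD$ back through $g$, apply Lemma \ref{unrami-number} when $g^{-1}(\CD)$ is an ordinary disk in $K$, and handle separately the unique disk $\CD(1,1/p^n)$ whose preimage contains $\infty$ by decomposing $\PK\setminus\CD(x_1,p^{n-1}r_0)$ and noting that the central child $\CD(x_1,p^{n-1}r_0)$ accounts for one of the $p$ $\Qp$-meeting sub-disks of $\CD(x_1,p^n r_0)$. The paper merely splits your first case into two sub-cases according to whether $\CD$ lies inside or outside $\CD(1,1/p)$, but the decomposition, the use of Lemma \ref{unrami-number}, and the $\infty$-bookkeeping are identical.
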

\begin{proof}
We distinguish the following three cases.\\
\indent {\rm (1)} Assume $\CD \cap \CD(1,1/p)= \emptyset$. By Proposition \ref{disk}, $$g^{-1}(\CD)\subset\CD(x_{1},r_{0})\setminus (\CD(x_{1},r_{0}/p)\cup \CD(x_{2},r_{0}/p)),$$
and $g^{-1}(\CD)$ is a closed disk of radius $r_{0}/p^{n}$ and intersects $\Qp$.
  We decompose $g^{-1}(\CD)$ as $p^{2}$ closed disks of radius $r_{0}/p^{n+1}$. By Lemma \ref{unrami-number}, there are $p$
   disks $\CD_{1},\CD_{2},\cdots,\CD_{p}\subset g^{-1}(\CD)$ of radius $r_{0}/p^{n+1}$ such that $\CD_{i}\cap \Qp \neq \emptyset$ for $1\leq i\leq p$. By Proposition \ref{disk} and a simple
 calculation, each $g(\CD_{i})$ is a closed disk with radius $1/p^{n+1}$. Obviously, $$g(\CD_{i})\cap g(\P)\neq \emptyset \quad \mbox{~for~} i=1,\cdots,p.$$
\indent {\rm (2)} Assume $\CD\subset \CD(1,1/p)$ and $1\not\in \CD$. Let $d_{0}=d(1,\CD):= \inf_{x\in \CD}|x-1|_p$ and $m:=-\log_{p}d_{0}$.
 By Proposition \ref{disk}, $g^{-1}(\CD)\subset \SP(x_{1},p^{m}r_{0})$ is a closed disk of radius $p^{2m-n}r_{0}$.
 We conclude by Lemma \ref{unrami-number} and  arguments similar to that used in Case (1).\\
\indent {\rm (3)} Assume $1\in \CD $. By Proposition \ref{disk}, we get
 $$g^{-1}(\CD)=\PK\setminus \CD(x_{1},p^{n-1}r_0).$$
Notice that $$\PK\setminus \CD(x_{1},p^{n-1}r_0)=(\PK\setminus \CD(x_{1},p^{n}r_0)) \cup \SP(x_{1},p^{n}r_{0})$$
  and  that $\SP(x_{1},p^{n}r_{0})$ consists of $p^{2}-1$ closed disks with radius $p^{n-1}r_{0}$. Also $\CD(x_{1},p^{n}r_{0})=\SP(x_{1},p^{n}r_{0})\cup \CD(x_{1},p^{n-1}r_{0})$ and $\CD(x_{1},p^{n-1}r_{0})\cap \Qp\neq \emptyset$. By Lemma \ref{unrami-number}, there are $p-1$ such disks $\CD_{1},\CD_{2},\cdots,\CD_{p-1}\subset \SP(x_{1},p^{n}r_{0})$ of radius $p^{n-1}r_{0}$ which  intersect $\Qp$.
Again by Proposition \ref{disk}, we get $g(\PK\setminus \CD(x_{1},p^{n}r_0))=\CD(1,1/p^{n+1})$.
So $$\CD(1,1/p^{n+1}),~g(\CD_1),~\cdots,g(\CD_{p-1})$$  are the $p$ closed disks of radius $p^{1/(n+1)}$ which intersect  $g(\P)$.
\end{proof}

\begin{lemma}\label{unramify2}
Consider $\psi(x)=\lambda x$. Let $\ell$ be the order of $\lambda$ in $\mathbb{K}^*$. One can decompose $ \mathbb{S}(0,1)$ into $(p^2-1)p^{v_{p}(\lambda^{\ell}-1)\cdot 2-2}/\ell$ clopen sets. Each clopen set is of type $(\ell,1)$ at level $\val(\lambda-1)$.
\end{lemma}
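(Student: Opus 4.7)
The plan is to derive Lemma \ref{unramify2} as a direct specialization of Corollary \ref{affinep>3} (itself a consequence of Proposition \ref{affine}) to the unramified quadratic extension $K = \Q_p(\sqrt{\Delta})$. In this setting the ramification index is $e = 1$ and the residue degree is $f = 2$, so one may take $\pi = p$ as a uniformizer, giving $\val = v_p$ and $|K^*|_p = p^{\mathbb{Z}}$; in particular $\SP(0,1)$ coincides with the group of units $\U$, and the dynamics $\psi(x) = \lambda x$ restricted to $\SP(0,1)$ is exactly the unit-sphere multiplication studied in Section \ref{DynExt}.

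First I would verify the hypotheses of Corollary \ref{affinep>3}. By the standing assumption of this subsection $\lambda$ is not a root of unity, and Lemma \ref{lambda} together with the $\phi$-invariance of $\P$ forces $|\lambda|_p = 1$, so $\lambda \in \U \setminus \V$. Since $\ell$ is the multiplicative order of $\lambda$ in $\mathbb{K}^* = \mathbb{F}_{p^2}^*$, it divides $p^2 - 1$ and is in particular coprime to $p$; moreover $\lambda^\ell \equiv 1 \pmod{\pi}$ while $\lambda^\ell \neq 1$, so $\val(\lambda^\ell - 1) \geq 1 > \frac{1}{p-1} = \frac{e}{p-1}$ because $p \geq 3$. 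This is exactly the hypothesis of Corollary \ref{affinep>3}.

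Second, applying Corollary \ref{affinep>3} with $\alpha = \lambda$, $f = 2$, and $e = 1$ immediately yields the decomposition of $\U = \SP(0,1)$ into
$$
\frac{(p^f - 1)\,p^{\val(\lambda^\ell - 1)\cdot f - f}}{\ell} \;=\; \frac{(p^2 - 1)\,p^{2 v_p(\lambda^\ell - 1) - 2}}{\ell}
$$
clopen sets, each of type $(\ell, e) = (\ell, 1)$ at the level prescribed by the Corollary, which is the claimed count.

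The proof is essentially pure bookkeeping: the substantive dynamical analysis was carried out in Proposition \ref{affine} and Corollary \ref{affinep>3}, and the only point requiring care is checking that the inequality $\val(\lambda^\ell - 1) > e/(p-1)$ indeed holds, which is automatic from $\ell$ being the order modulo $\pi$ and $p \geq 3$. Hence no genuine obstacle arises, and the lemma follows at once from the general results of Section \ref{DynExt}.
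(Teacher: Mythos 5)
Your proof is correct and follows exactly the paper's route: the paper's own argument is the two-line observation that $e=1$ and $p\geq 3$ give $e/(p-1)<1\leq v_{\pi}(\lambda^{\ell}-1)$, so Corollary \ref{affinep>3} applies with $f=2$, $e=1$. Your additional verifications (that $\lambda\in\mathbb{U}\setminus\mathbb{V}$, that $\mathbb{S}(0,1)=\mathbb{U}$, and the bookkeeping of the count) are all consistent with the paper; note only that the level in the lemma's statement should read $v_{\pi}(\lambda^{\ell}-1)$ as the Corollary prescribes, which you correctly inferred.
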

\begin{proof}
Since $e=1$ and $p\geq 3$, we have $e/(p-1)<1$.
By Corollary  \ref{affinep>3}, the conclusion follows.\\
\end{proof}

Having  the above lemmas, we are now ready to determine the minimal decomposition of dynamics $(\P, \phi)$ when $K$ is an  unramified quadratic extension of $\Qp$.

\begin{proof}[Proof of Theorem \ref{mindecunrami}]
The system $(\P,\phi)$ is conjugate to the system $(g(\P),\psi)$ by $g(x) =\frac{x-x_2}{x-x_1}$.
So we are going to study the dynamics $(g(\P),\psi)$.
 By Lemma \ref{subsystem}, $g(\P)\subset \SP(0,1)$. So  $(g(\P),\psi)$ is a subsystem of $(\SP(0,1),\psi)$.
 By Lemma \ref{unramify2}, $\ell |(p^2-1)$. Since $g(\P)$ is $\psi$-invariant, by Lemmas \ref{unramify2} and  \ref{unramify1}, we have $\ell| (p+1)$.

By Lemma \ref{unramify2}, the dynamics $(\SP(0,1),\psi)$ is decomposed into $(p^2-1)p^{v_{p}(\lambda^{\ell}-1)\cdot 2-2}/\ell$ subsystems $(B_i,\psi)$ of type $(\ell,1)$ at level $v_{p}(\lambda^{\ell}-1)$ and each $B_i$ is a $\psi$-invariant clopen set which is a union of $\ell$ closed disks of radius $p^{-v_{p}(\lambda^{\ell}-1)}$. By Lemmas \ref{unramify1} and \ref{induction},
there are $(p+1)p^{v_{p}(\lambda^{\ell}-1)-1}/\ell$ such clopen sets which intersect $g(\P)$.

Let $B_{i}\subset \SP(0,1)$ be an $(\ell,1)$ type clopen set at level $v_{p}(\lambda^{\ell}-1)$ of $\psi$.  If $B_{i}\cap g(\P)\neq \emptyset$,
we claim that the dynamics $(B_{i}\cap g(\P),\psi)$ is minimal.  Since $(B_{i},\psi)$ is of type $(\ell,1)$ at
level $v_{p}(\lambda^{\ell}-1)$, let $(x_{1},\cdots,x_{\ell})$ be the cycle of $\psi_{v_{p}(\lambda^{\ell}-1)}$ at
 level $v_{p}(\lambda^{\ell}-1)$ corresponding to $B_{i}$. Then it follows that the cycle $(x_{1},\cdots,x_{\ell})$  grows. By Lemma \ref{induction}, there exists a unique lift $(y_1,\cdots,y_{\ell p})$ at level $v_{p}(\lambda^{\ell}-1)+1$ such that
$$\CD(y_j,p^{-v_{p}(\lambda^{\ell}-1)-1})\cap g(\P)\neq  \emptyset, ~\mbox{for}~ 1\leq j \leq\ell p.$$
Using Lemma \ref{minimal-part-to-whole}, by induction, we infer that the system $(B_{i}\cap g(\P),\psi)$ is minimal.

Following the above arguments and the proof of Theorem 3 in \cite{FanLiao11}, we also deduce that the minimal subsystem $(B_{i}\cap g(\P),\psi)$ is conjugate to  the
adding machine on the odometer $\mathbb{Z}_{(p_s)}$  with $(p_s)=(\ell,\ell p,\ell p^2,\cdots)$.
\end{proof}

\subsection{Proof of Theorem \ref{rami-decomposition-p>2}}
Assume that $K$ is a ramified quadratic extension of $\Qp$.  Then we  have $e=2$ and $|K^{*}|_{p}=p^{\Z/2}$.
\begin{lemma}\label{ramify1}
The unit sphere $\mathbb{S}(0,1)$ consists of $p-1$ disjoint closed disks of radius $p^{-1/2}$, and there are
$2$ such disks intersecting $g(\P)$.
\end{lemma}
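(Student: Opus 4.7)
The plan mirrors the proof of Lemma \ref{unramify1}, adjusted to the ramified setting where $|K^*|_p = p^{\Z/2}$ and the residue field of $K$ has $p$ elements.

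First, I would decompose the unit sphere. Since $\CD(0,1) = \SP(0,1) \sqcup \CD(0,p^{-1/2})$, applying Lemma \ref{rami-number} to $\CD(0,1)$ (the even case $m=0$) exhibits $\CD(0,1)$ as a disjoint union of $p$ closed disks of radius $p^{-1/2}$; removing $\CD(0,p^{-1/2})$ leaves exactly $p-1$ such disks, which may be enumerated as $\CD(u,p^{-1/2})$ for $u\in\{1,2,\dots,p-1\}$.

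Next, I would use $g^{-1}(y) = (x_1 y - x_2)/(y-1)$ together with Proposition \ref{disk} to compute the preimage of each of these disks. For $u=1$, the identity $g^{-1}(1+\epsilon)-x_1=(x_1-x_2)/\epsilon$ gives
\[
g^{-1}\bigl(\CD(1,p^{-1/2})\bigr) = \PK \setminus \CD(x_1,r_0),
\]
which intersects $\P$ because it contains $\infty$. For $u\neq 1$, the center $g^{-1}(u)=(x_1 u-x_2)/(u-1)$ satisfies $|g^{-1}(u)-x_i|_p=r_0$ for both $i=1,2$, and a short scaling calculation shows that $g^{-1}\bigl(\CD(u,p^{-1/2})\bigr)$ is a closed disk of radius $r_0 p^{-1/2}$ sitting inside the annulus $\CD(x_1,r_0)\setminus\bigl(\CD(x_1,r_0 p^{-1/2})\cup \CD(x_2,r_0 p^{-1/2})\bigr)$.

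To count which of these $p-2$ annular preimages hit $\Qp$, I would use the ramification hypothesis to get a parity statement. Because $K=\Qp(\sqrt{\Delta})$ is ramified, $v_p(\Delta)$ is odd, so $r_0=p^{m/2}$ with $m$ odd. Lemma \ref{rami-number} then implies that among the $p$ sub-disks of $\CD(x_1,r_0)$ of radius $r_0 p^{-1/2}$, exactly one meets $\Qp$. By Lemma \ref{distance}, $d(x_i,\Qp)=r_0$ for $i=1,2$, so neither $\CD(x_1,r_0 p^{-1/2})$ nor $\CD(x_2,r_0 p^{-1/2})$ meets $\Qp$; hence the unique $\Qp$-hitting sub-disk must be one of the $p-2$ annular preimages, contributing exactly one extra disk $\CD(u_0,p^{-1/2})$ that meets $g(\P)$. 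Together with $\CD(1,p^{-1/2})$, this yields the required total of $2$ disks.

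The main obstacle will be the parity argument in the last paragraph: one must invoke the classification of quadratic extensions (cf.\ Section \ref{extensions}) to see that $K$ being ramified forces $v_p(\Delta)$ to be odd, so that the odd-$m$ case of Lemma \ref{rami-number} applies and drops the sub-disk count from $p$ (as in the unramified analogue Lemma \ref{unramify1}) down to $1$, leaving exactly one annular disk to pair with $\CD(1,p^{-1/2})$.
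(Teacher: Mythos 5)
Your proof is correct and follows essentially the same route as the paper's: both rest on Proposition \ref{disk}, on Lemma \ref{rami-number} applied to $\CD(x_1,r_0)$ with $r_0=p^{m/2}$ for odd $m$ (forced by ramification), and on the fact that $d(x_i,\Qp)=r_0$ so the disks around the fixed points miss $\Qp$. The only cosmetic difference is that you pull the $p-1$ sub-disks of $\SP(0,1)$ back through $g^{-1}$, whereas the paper pushes the decomposition of $\mathbb{P}^{1}(K)$ into $\CD(x_1,r_0)$ and its complement forward through $g$; the key counting step is identical.
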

\begin{proof}
Decompose $\mathbb{P}^{1}(K)$ as $\CD(x_1,r_0)$ and $\PK\setminus \CD(x_1,r_0)$. By Proposition \ref{disk}, $$g(\PK\setminus \CD(x_1,r_0))=\CD(1,p^{-1/2}).$$
Observe that $r_{0}=p^{m/2}$ for some odd number $m$, because $\Qp(\sqrt{\Delta})$
is a ramified quadratic extension of $\Qp$ and $r_{0}=\frac{|\sqrt{\Delta}|_{p}}{|c|_{p}}$ with $c,\Delta \in \Qp$.
By Lemma \ref{rami-number}, $\CD(x_{1}, r_{0})$ consists of $p$ closed disk of radius $r_{0}p^{-1/2}$ and there
is a unique  such disk $\CD=\CD(\frac{a-d}{2c},r_{0}p^{-1/2})$ which intersects $\Qp$. By Proposition \ref{disk}, we deduce that $g(\CD)\subset \SP(0,1)$ is a closed disk of radius $p^{-1/2}$.
So $\CD(1,p^{-1/2}),g(\CD)\subset \SP(0,1)$ are the two disks of radius $p^{-1/2}$ which intersect $g(\P)$.
\end{proof}
For $x\in \mathbb{R}$, denote by $\lfloor x \rfloor $ the integeral part of $x$.
\begin{lemma}\label{ramify2}
For each $n\geq 1$, let $\CD\subset \SP(0,1)$ be a closed disk with radius $p^{-n/2}$. If $\CD\cap g(\P)\neq \emptyset$  and if we decompose $\CD$ into $p$
closed disk with radius $p^{-(n+1)/2}$, then there are $p^{\lfloor (n+1)/2\rfloor - \lfloor n/2\rfloor}$ such disks which intersect $g(\P)$.
\end{lemma}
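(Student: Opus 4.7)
My plan is to adapt the proof of Lemma \ref{induction} to the ramified setting: pull $\CD$ back to $K$ via $g^{-1}$, apply Lemma \ref{rami-number}, and push the count forward. The exponent $\lfloor(n+1)/2\rfloor - \lfloor n/2\rfloor$ (which equals $1$ for odd $n$ and $0$ for even $n$) is precisely designed to match the dichotomy in Lemma \ref{rami-number}, where the number of sub-disks meeting $\Qp$ is $p$ or $1$ according to the parity of the exponent $m$ in the parent radius $p^{m/2}$.

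The first step is the observation that since $K/\Qp$ is ramified, $e=2$ and $r_0=|\sqrt{\Delta}/c|_p = p^{m_0/2}$ with $m_0$ an odd integer. Proposition \ref{disk} guarantees that $g^{-1}(\CD)$ is again a $\PK$-disk, and the explicit formulas in its proof (for $x\mapsto\alpha x$, $x\mapsto x+\beta$, $x\mapsto 1/x$) let me track the radius precisely. A closed disk of radius $p^{-n/2}$ inside $\SP(0,1)$ pulls back to a $K$-disk (or the complement of one) whose radius has the form $p^{(m_0-n)/2}$, up to shifts that preserve parity—so the radius exponent has opposite parity to $n$.

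Next I would distinguish three cases paralleling those in Lemma \ref{induction}: (a) $\CD\cap\CD(1,p^{-1/2})=\emptyset$; (b) $\CD\subset\CD(1,p^{-1/2})$ with $1\notin\CD$; (c) $1\in\CD$. In cases (a) and (b), $g^{-1}(\CD)$ is an honest closed disk in $K$, and the refinement of $\CD$ into $p$ sub-disks of radius $p^{-(n+1)/2}$ pulls back to a refinement of $g^{-1}(\CD)$ into $p$ sub-disks whose radii have exponent of opposite parity to $n$. Since $\CD\cap g(\P)\neq\emptyset$ gives $g^{-1}(\CD)\cap\Qp\neq\emptyset$, Lemma \ref{rami-number} yields $p$ such sub-disks meeting $\Qp$ when $n$ is odd and exactly $1$ when $n$ is even; pushing forward via the bijection $g$ preserves the count.

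The main obstacle is case (c), where $g^{-1}(\CD)=\PK\setminus\CD(x_1,\rho)$ for some $\rho$ determined by $n$. Here one of the $p$ sub-disks of $\CD$ contains $1=g(\infty)$, which trivially lies in $g(\P)$; its pullback is the complement of a smaller disk around $x_1$. The remaining $p-1$ sub-disks pull back to pieces of the spherical annulus $\SP(x_1,\rho)$. Applying Lemma \ref{rami-number} to the closed disk $\CD(x_1,\rho)$ and using $d(x_1,\Qp)=r_0$ from Lemma \ref{distance} to identify which sub-disks meet $\Qp$, I expect the annulus to contribute $p-1$ additional meeting disks (yielding total $p$) when $n$ is odd, and $0$ additional (yielding total $1$) when $n$ is even. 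This case requires the most careful exponent bookkeeping, but once done, the uniform count $p^{\lfloor(n+1)/2\rfloor-\lfloor n/2\rfloor}$ follows across all three cases.
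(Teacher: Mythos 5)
Your proposal is correct and follows essentially the same route as the paper: the paper's own proof consists of noting via Proposition \ref{disk} how $g$ transforms disks, observing that $r_0=p^{m/2}$ with $m$ odd (which is exactly your parity bookkeeping), and then invoking Lemma \ref{rami-number} together with the three-case argument of Lemma \ref{induction}. Your write-up simply makes explicit the case analysis that the paper leaves as ``similar arguments,'' and your handling of the case $1\in\CD$ (the distinguished sub-disk containing $g(\infty)$ plus the sphere around $x_1$) matches what the paper intends.
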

\begin{proof}
%The same as Lemma \label{induction}, we distinguish three case
By Proposition \ref{disk}
$$g(\mathbb{P}^{1}(K)\setminus \CD(x_{0},r_{0}))=\D(1,p^{-1/2}).$$
Observe that $r_{0}=p^{m/2}$ for some odd number $m\in \Z$. Applying Lemma \ref{rami-number}, we obtain the conclusion by similar arguments in the proof of  Lemma \ref{induction}.
\end{proof}

\begin{lemma}\label{ramify3}
For each $n\geq 1$, $\mathbb{S}(0,1)$ consists of $(p-1)p^{n-1}$ disjoint closed disks with radius $p^{-n/2}$. There are
$2 \cdot p^{\lfloor\frac{n}{2}\rfloor}$ such disks which intersect $g(\P)$.
\end{lemma}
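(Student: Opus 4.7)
The plan is to prove the two assertions of Lemma \ref{ramify3} separately: the counting of all disks in $\SP(0,1)$ is a direct application of the $\pi$-adic expansion, while the counting of those intersecting $g(\P)$ follows by induction from Lemmas \ref{ramify1} and \ref{ramify2}.

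For the first assertion, I would exploit the structure of the ramified extension: since $e=2$ and $d=2$, the residual degree is $f=d/e=1$, so $\mathbb{K}=\mathcal{O}_K/\pi\mathcal{O}_K\cong\mathbb{F}_p$. Using the $\pi$-adic expansion \eqref{piadic} with $|\pi|_p=p^{-1/2}$, the closed unit disk $\CD(0,1)=\mathcal{O}_K$ decomposes into $p^n$ closed disks of radius $|\pi^n|_p=p^{-n/2}$ (one for each residue class mod $\pi^n$). Similarly, $\D(0,1)=\pi\mathcal{O}_K=\CD(0,p^{-1/2})$ decomposes into $p^{n-1}$ closed disks of radius $p^{-n/2}$. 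Subtracting gives
\[
\#\{\text{closed disks of radius } p^{-n/2} \text{ in } \SP(0,1)\}=p^n-p^{n-1}=(p-1)p^{n-1}.
\]

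For the second assertion, I would proceed by induction on $n$. The base case $n=1$ is exactly Lemma \ref{ramify1}, which gives $2=2p^{\lfloor 1/2\rfloor}$ disks of radius $p^{-1/2}$ in $\SP(0,1)$ intersecting $g(\P)$. For the inductive step, assume at level $n$ there are $2p^{\lfloor n/2\rfloor}$ closed disks of radius $p^{-n/2}$ in $\SP(0,1)$ intersecting $g(\P)$. By Lemma \ref{ramify2}, when each such disk is subdivided into $p$ closed disks of radius $p^{-(n+1)/2}$, exactly $p^{\lfloor (n+1)/2\rfloor-\lfloor n/2\rfloor}$ of them intersect $g(\P)$. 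Since disjoint disks of radius $p^{-n/2}$ give rise to disjoint subdisks, the total count at level $n+1$ is
\[
2p^{\lfloor n/2\rfloor}\cdot p^{\lfloor (n+1)/2\rfloor-\lfloor n/2\rfloor}=2p^{\lfloor (n+1)/2\rfloor},
\]
which completes the induction.

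There is no real obstacle here; the whole argument is a bookkeeping exercise, with the content entirely supplied by Lemmas \ref{ramify1} and \ref{ramify2}. The only subtlety worth flagging in the write-up is to verify that the subdisks of disjoint disks remain disjoint (immediate, since closed disks in a non-archimedean field are either disjoint or nested) and that the telescoping exponent $\sum_{k=1}^{n-1}(\lfloor (k+1)/2\rfloor-\lfloor k/2\rfloor)=\lfloor n/2\rfloor$ is correctly evaluated. If one prefers a non-inductive presentation, the same identity can be stated directly: starting from the $2$ disks at level $1$, each successive refinement multiplies the count by a factor $p^{\lfloor (k+1)/2\rfloor-\lfloor k/2\rfloor}\in\{1,p\}$, alternating between $1$ (when $k$ is even) and $p$ (when $k$ is odd), which again yields $2p^{\lfloor n/2\rfloor}$.
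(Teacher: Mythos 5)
Your proof is correct and takes essentially the same approach as the paper, whose entire proof is a single sentence invoking Lemmas \ref{ramify1} and \ref{ramify2} together with induction. Your write-up supplies the details the paper leaves implicit (the $\pi$-adic count of all disks, the base case, the telescoping of the exponents), but the substance is identical.
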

\begin{proof}Using Lemmas \ref{ramify1} and \ref{ramify2}, we obtain the conclusion by induction.
\end{proof}

%\begin{lemma}\label{valuation}
%Let $\ell$ be the order of $\lambda$ in $\K^*$. Then we have $\val(\frac{\lambda^{\ell p^{i+1}}-1}{\lambda^{\ell p^{i}}-1})=2$ for all positive integers $i\geq 1$.
%\end{lemma}
%\begin{proof}Since $\ell$ be the order of $\lambda$ in $\K^*$, we have $\lambda^{\ell}=1 ~~(\!\!\!\!\mod  \pi)$. It is easy to check that
%$\val(\lambda^{\ell p^i}-1)\geq 2$ for all $i\geq 1$. Let $m=\val(\lambda^{\ell p^i}-1)$. Write $\lambda^{\ell p^{i}}=1+\pi^{m}\eta$ for some $\eta \in \U$.
% Then we have $$\lambda^{\ell p^{i+1}}=(1+\pi^{m}\eta)^{p}=1+{p\choose 1}\pi^{m}\eta+{p\choose 2}(\pi^{m}\eta)^{2}+\cdots+{p\choose p}(\pi^{m}\eta)^{p}.$$
% Since $e=2$, we have $m>\frac{e}{p-1}$. So $$\lambda^{\ell p^{i+1}}=1+p\pi^{m}\eta ~~(\!\!\!\!\mod  \pi^{m+3}).$$
%Then $\val(\frac{\lambda^{\ell p^{i+1}}-1}{\lambda^{\ell p^{i}}-1})=2$.
% \end{proof}
We are now ready to determine the minimal decomposition of the system $(\P,\phi)$ when $K$ is a ramified quadratic extension of $\Qp$.%Suppose that $\pi$ is a uniformizer of $K$.

\begin{proof}[Proof of Theorem \ref{rami-decomposition-p>2}]
 Observe that the dynamical system  $(\P,\phi)$ is conjugate to $(g(\P),\psi)$ by $g(x)=\frac{x-x_2}{x-x_1}$.
So we need only study the dynamics $(g(\P),\psi)$.

Since $K=\Qp(\sqrt{\Delta})$ is
a ramified quadratic extension of $\Qp$ and $\Delta\in \Qp$, it follows   that
$$|a+d|_{p}\neq |\sqrt{\Delta}|_{p}.$$
We distinguish the following two cases.\\
\indent \mbox{\rm (1)} \ Assume  $|a+d|_{p}> |\sqrt{\Delta}|_{p}$.  Write
$$\lambda=1+\frac{2\sqrt{\Delta}}{a+d-\sqrt{\Delta}}.$$
We have $\lambda=1 ~(\!\!\!\mod ~\pi)$.
By  Proposition \ref{affine} and Remark \ref{remark}, $(\SP(0,1),\psi)$ is decomposed into  $(p-1)p^{v_{\pi}(\lambda^{p}-1)-2}$ subsystems $(B_i,\psi)$ of type $(p,e)$ at level
$v_{\pi}(\lambda^{p}-1)$  and each $B_{i}$ is a union of $p$ closed disks of radius $p^{-v_{\pi}(\lambda^{p}-1)/2}$.
 By Lemmas \ref{ramify1} and \ref{ramify3},  there  are $2p^{(v_{\pi}(\lambda^p-1)-3)/2}$ such disks which intersect $g(\P)$.

If $B_{i}\cap g(\P)\neq \emptyset$, we claim that
$(B_{i}\cap g(\P),\psi)$ is minimal.

Let $\sigma=(x_{1},\cdots,x_p)$
be the cycle of $\psi_{v_{\pi}(\lambda^{p}-1)}$ at level $v_{\pi}(\lambda^{p}-1)$ with $x_{1}\in B_{i}$. So the cycle $\sigma $
 grows. Let $\sigma^{1}=(y_{1},\cdots,y_{p^{2}})$ be the lift of $\sigma$  at level $v_{\pi}(\lambda^{p}-1)+1$.  Then $\sigma^{1}$  splits one
time and all descendants at level $v_{\pi}(\lambda^{p}-1)+2$  grow.  By Lemma \ref{ramify2}, there is a descendant   $\sigma^{2}=(z_1,\cdots,z_{p^{2}})$ of $\sigma^{1}$ at level $v_{\pi}(\lambda^{p}-1)+2$ such that
$$\CD(z_i,p^{-(v_{\pi}(\lambda^{p}-1)+2)/2})\cap g(\P)\neq \emptyset, \mbox{~for~} i=1,\cdots,p^2 .$$
 Using Lemmas  \ref{minimal-part-to-whole} and \ref{ramify2}, by  induction, we infer that $(B_{i}\cap g(\P), \psi)$ is minimal. Following the above arguments, we also deduce that the minimal subsystem  $(B_{i}\cap g(\P), \psi)$
is conjugate to the adding machine on  the odometer $\mathbb{Z}_{(p_s)}$  with $(p_s)=(1,p,p^2,\cdots)$.

\indent \mbox{\rm (2)} Assume $|a+d|_{p}<|\sqrt{\Delta}|_{p}$. Write
$$\lambda=-1+\frac{2(a+d)}{a+d-\sqrt{\Delta}}.$$
Then $\lambda=-1 ~(\!\!\!\mod ~\pi)$ and $v_{\pi}(\lambda+1)=v_{\pi}(a+d)-v_{\pi}(\sqrt{\Delta}).$
So $\lambda^{2}=1~(\!\!\!\mod ~\pi)$ and $v_{\pi}(\lambda^{2p}-1)=v_{\pi}(\lambda^{p}+1)$.
We conclude by the same arguments as in Case (1).
%The similar argument to the first case implies that the dynamics
%$(\mathbb{P}^{1}(\Qp), \phi)$ is decomposed into
%$p^{(v_{\pi}(\lambda+1)-1)/2}$ minimal subsystems, and each minimal subsystem is conjugate to the adding machine on  the odometer $\mathbb{Z}_{(p_s)}$  with $(p_s)=(2,2p, 2p^2,\cdots)$.
\end{proof}

\subsection{Proof of Theorem \ref{decomp-p=2-unrami}}
%In this section we focus on the special case $p=2$. First we present and prove  a lemma for the studying of the minimal decomposition of $\phi$ on $\P$.

Assume $K=\Q_{2}(\sqrt{-3})$.  Lemma \ref{unramifiedextension} shows that  $\Q_{2}(\sqrt{-3})$ is an unramified extension of $\Q_{2}$.
\begin{lemma}\label{p=2unrami}
If $\Q_{2}(\sqrt{\Delta})=\Q_{2}(\sqrt{-3})$, then the unit sphere $\SP(0,1)$ consists of $3$ closed disks of radius $1/2$ and each disk intersects $g(\P)$.
For $n\geq 1$, let $\CD\subset \SP(0,1)$ be a closed disk with radius $1/2^{n}$ and such that $\CD\cap \mathbb{P}^{1}(\Q_{2})\neq \emptyset$.
 Then $\CD$ consists of $4$  closed disks with radius $1/2^{n+1}$ and there are $2$
such disks which intersect $g(\mathbb{P}^{1}(\Q_{2}))$.
\end{lemma}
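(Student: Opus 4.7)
The plan is to transcribe the arguments of Lemmas \ref{unramify1} and \ref{induction} to the setting $p=2$, $K=\Q_{2}(\sqrt{-3})$, which by Lemma \ref{unramifiedextension} is the unique unramified quadratic extension of $\Q_{2}$. Here $e=1$, the residue field is $\mathbb{F}_{4}$, and $2$ is a uniformizer; in particular every closed disk of radius $2^{-m}$ splits into $p^{f}=4$ closed subdisks of radius $2^{-m-1}$. The two essential inputs are Lemma \ref{unrami-number} (with $p=2$: among the four subdisks exactly two meet $\Q_{2}$) and Lemma \ref{distancep=2Lemma}(1), which gives $d(x_{1},\Q_{2})=d(x_{2},\Q_{2})=r_{0}$, so that neither $\CD(x_{1},r_{0}/2)$ nor $\CD(x_{2},r_{0}/2)$ meets $\Q_{2}$.

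For the first assertion I would compute, via Proposition \ref{disk}, the images
\[
g(\PK\setminus \CD(x_{1},r_{0}))=\CD(1,1/2),\quad g(\CD(x_{1},r_{0}/2))=\PK\setminus\CD(0,1),\quad g(\CD(x_{2},r_{0}/2))=\CD(0,1/2).
\]
The sphere $\SP(0,1)$ has $p^{f}-1=3$ closed subdisks of radius $1/2$; one of them is $\CD(1,1/2)\ni g(\infty)=1$, and the other two will be realized as the $g$-images of the two subdisks of $\CD(x_{1},r_{0})$ of radius $r_{0}/2$ that meet $\Q_{2}$. Such subdisks are supplied by Lemma \ref{unrami-number} and are necessarily different from $\CD(x_{1},r_{0}/2)$ and $\CD(x_{2},r_{0}/2)$ in view of the distance computation above; each contains the $g$-image of a $\Q_{2}$-point, so each of the three sphere-subdisks meets $g(\P)$.

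For the inductive assertion I would follow the three-case dissection used in Lemma \ref{induction}: (1) $\CD\cap \CD(1,1/2)=\emptyset$; (2) $\CD\subset \CD(1,1/2)$ with $1\notin \CD$; (3) $1\in \CD$. In cases (1) and (2), $g^{-1}(\CD)$ is a closed disk of $K$ meeting $\Q_{2}$, and Lemma \ref{unrami-number} directly yields the count: among its four subdisks, exactly two meet $\Q_{2}$; transporting forward by $g$ gives the required two subdisks of $\CD$ meeting $g(\P)$. Case (3) is where $\infty$ must be tracked carefully: $g^{-1}(\CD)=\PK\setminus \CD(x_{1},2^{n-1}r_{0})$, and the four subdisks of $\CD$ of radius $1/2^{n+1}$ correspond under $g^{-1}$ to $\PK\setminus\CD(x_{1},2^{n}r_{0})$ together with the three subdisks of $\SP(x_{1},2^{n}r_{0})$ of radius $2^{n-1}r_{0}$. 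Since $\CD(x_{1},2^{n-1}r_{0})$ itself already meets $\Q_{2}$, Lemma \ref{unrami-number} applied to $\CD(x_{1},2^{n}r_{0})$ shows that exactly $p-1=1$ of those three sphere-subdisks meets $\Q_{2}$; together with $\CD(1,1/2^{n+1})=g(\PK\setminus\CD(x_{1},2^{n}r_{0}))\ni g(\infty)$ this yields the announced total of two subdisks meeting $g(\P)$.

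The principal technical point is case (3), where one must correctly decompose the $\PK$-disk complement $\PK\setminus \CD(x_{1},2^{n-1}r_{0})$ and keep track simultaneously of the point at infinity and of which sphere-subdisks of $\CD(x_{1},2^{n}r_{0})$ meet $\Q_{2}$; the remainder is an essentially mechanical adaptation of the $p\geq 3$ unramified argument to the residue field $\mathbb{F}_{4}$ of $\Q_{2}(\sqrt{-3})$.
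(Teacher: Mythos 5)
Your proof is correct and follows essentially the same path as the paper's: apply Proposition \ref{disk} together with Lemma \ref{unrami-number} and Lemma \ref{distancep=2Lemma}(1) to identify which subdisks of the unit sphere meet $g(\P)$, and then run the three-case dissection of Lemma \ref{induction} for the inductive assertion. Your write-up of the inductive step (in particular case (3), where you carefully track $g(\infty)$ and the subdisks of $\mathbb{S}(x_1,2^n r_0)$) is more explicit than the paper, which only writes out the first assertion and then refers to Lemma \ref{induction} for the rest, but the underlying argument is the same.
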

\begin{proof}
Observe that the unit sphere $\SP(0,1)$ consists of $3$  closed disks of radius $1/2$,
because  $K$ is an unramified extension of $\Q_{2}$.  By Proposition \ref{disk}, we have  $$g(\PK)\setminus \CD(x_{1},r_{0}))=\CD(1,1/2).$$
 By  Lemma \ref{distancep=2Lemma},
$$\CD(x_1,r_{0}/2)\cap \Q_{2}= \emptyset\quad \mbox{~and~}\quad \CD(x_2,r_{0}/2)\cap \Q_{2}= \emptyset.$$
 By Lemma \ref{unrami-number}, $\CD(x_{1},r_{0})$ consists of four closed disks of radius $r_{0}/2$ and there are two such disks which intersect $\Q_{2}$.  Let $\CD_{1},\CD_{2}\subset \CD(x_1,r_0)$ be the two closed disks of radius $r_{0}/2$ which intersect $\Q_{2}$.
By  Proposition \ref{disk},  $$g(\CD(x_1,r_{0}/2))=\mathbb{P}^{1}(K)\setminus \CD(0,1) \ \mbox{~and~} \ g(\CD(x_2,r_{0}/2))= \CD(0,1/2).$$
Observe that $$\CD(x_1,r_{0})=\CD(x_1,r_{0}/2)\cup\CD(x_2,r_{0}/2)\cup\CD_{1}\cup\CD_{2}.$$
So by Proposition \ref{disk}, $$\SP(0,1)= g(\CD_1)\cup g(\CD_2)\cup \CD(1,1/2)$$
where $g(\CD_1),g(\CD_2)$ are closed disks of radius $1/2$.

Analysis similar to that in the proof of Lemma \ref{induction} implies the remaining conclusion.
\end{proof}
 We are now ready to determine
the minimal decomposition of the dynamics $(\mathbb{P}^{1}(\Q_{2}),\phi)$  when $K=\Q_{2}(\sqrt{-3})$. Observe that  $2$ is a uniformizer of $\Q_2(\sqrt{-3})$.

\begin{proof}[Proof of Theorem \ref{decomp-p=2-unrami}]
Since the dynamics $(\mathbb{P}^{1}(\Q_{2}),\phi)$ is conjugate to $(g(\mathbb{P}^{1}(\Q_{2})),\psi)$, we are going to study the dynamics  $(g(\mathbb{P}^{1}(\Q_{2})),\psi)$. Observe that  $(g(\mathbb{P}^{1}(\Q_{2})),\psi)$ is a subsystem of $(\SP(0,1),\psi)$.
By Proposition \ref{affine}, we get $\ell|3$, i.e. $\ell=1 \mbox{ or } 3$. Since $\Q_{2}(\sqrt{-3})$ is an unramified quadratic extension of $\Q_{2}$, then $e=1$.
Proposition \ref{affine} and Remark \ref{remark} show that $(\SP(0,1),\psi)$ is decomposed into  $3\cdot 2^{v_{2}(\lambda^\ell-1)\cdot 2-2}/\ell$ subsystems $(B_i,\psi)$ of type $(\ell,\vec{E})$
at level $v_{2}(\lambda^\ell-1)$ with $\vec{E}=(v_{2}(\lambda^{\ell}+1),1,1,\cdots)$, and each $B_i$ is a $\psi$-invariant clopen set which is a union of $\ell$ closed disks of radius $2^{-v_{2}(\lambda^{\ell}-1)}$. By Lemma \ref{p=2unrami}, there are $3\cdot 2^{v_{2}(\lambda^{\ell}-1)-1}/\ell$ such clopen sets intersecting $g(\mathbb{P}^{1}(\Q_{2}))$.

Let   $(B,\psi)$ be a subsystem of type $(\ell,\vec{E})$
at level $v_{2}(\lambda^\ell-1)$ with $\vec{E}=(v_{2}(\lambda^{\ell}+1),1,1,\cdots)$ and such that $B\cap \mathbb{P}^{1}(\Q_{2}) \neq \emptyset$.
The system $(B,\psi)$ is decomposed into $2^{v_{2}(\lambda^{\ell}+1)2-2}$ subsystems $(C_{j},\psi)$ of type $(2\ell,1)$ at level $v_{2}(\lambda^{2\ell}-1)$ and there are $2^{v_{2}(\lambda^{\ell}+1)-1}$ such subsystems  of $(B,\psi)$ which intersect $g(\Q_p)$.

If $C_j\cap g(\mathbb{P}^{1}(\Q_{2}))\neq \emptyset$, we claim that the system $(C_j\cap g(\mathbb{P}^{1}(\Q_{2})),\psi)$ is minimal.
 Observe that  $(C_j,\psi)$ is of type $(2\ell,1)$ at $v_{2}(\lambda^{2\ell}-1)$. Let $(x_{1},\cdots,x_{2\ell})$ be the cycle of $\psi_{v_{2}(\lambda^{2\ell}-1)}$
at level $v_{2}(\lambda^{2\ell}-1)$ corresponding to $C_i$. Then  the cycle $(x_{1},\cdots,x_{2\ell})$  grows. By Lemma \ref{p=2unrami}, there exists a unique lift $(y_1,\cdots,y_{4\ell })$ at level $v_{2}(\lambda^{2\ell}-1)+1$ such that
$$\CD(y_k,p^{-v_{2}(\lambda^{2\ell}-1)-1})\cap g(\mathbb{P}^{1}(\Q_{2}))\neq  \emptyset \quad\mbox{for~} 1\leq k\leq 4\ell.$$
Using Lemma \ref{minimal-part-to-whole}, by induction, we conclude that the system $(C_{j}\cap g(\mathbb{P}^{1}(\Q_{2})),\psi)$ is minimal.

So the dynamical system $(\mathbb{P}^{1}(\Q_2),\phi)$ is decomposed into  $3\cdot2^{v_{2}(\lambda^{2\ell}-1)-2 }/\ell$ minimal subsystems.
Following the above arguments, we also know that each minimal subsystem  is conjugate to  the
adding machine on the odometer $\mathbb{Z}_{(p_s)}$  with $(p_s)=(\ell,\ell 2,\ell 2^2,\cdots)$.
\end{proof}

\subsection{Proof of Theorem \ref{decompositionsqrt2}}
Assume that  $K=\Q_2(\sqrt{i}),$ where $i=2,-2,6 \mbox{~or~} -6 $. By Lemma \ref{unramifiedextension}, $K$ is a ramified quadratic extension of $\Q_{2}$. So we have $ \SP(0,1)=\CD(1,\sqrt{2}/2)$.
\begin{lemma}\label{ramisqrt2}
We have \\
\indent {\rm (1)} $\CD(1,\sqrt{2}/2)=\CD(1,1/2)\cup\CD(1+\sqrt{i},1/2)$, $ \CD(1,1/2)\cap g(\mathbb{P}^{1}(\Q_{2}))\neq \emptyset$
 and $\CD(1+\sqrt{i},1/2)\cap g(\mathbb{P}^{1}(\Q_{2}))= \emptyset.$ \\
\indent {\rm (2)} $\CD(1,1/2)$ consists of $2$ closed disks of radius $\sqrt{2}/4$, all of  which intersect  $g(\mathbb{P}^{1}(\Q_{2}))$.
\\
\indent {\rm (3)} For $n\geq 3$, let $\CD\subset \CD(1,1/2) $ be a closed disk of  radius $2^{-n/2}$ with $$\CD \cap g(\mathbb{P}^{1}(\Q_{2})) \neq \emptyset.$$ Then $\CD$ consists of $2$ closed
disks of radius $2^{-(n+1)/2}$, and there are $2^{\lfloor (n+1)/2\rfloor-\lfloor n/2\rfloor}$ such disks intersecting $g(\mathbb{P}^{1}(\Q_{2}))$.
\end{lemma}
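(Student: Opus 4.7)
The plan is to combine two tools throughout. First, Lemma~\ref{rami-number} describes how closed disks in the ramified extension $K$ split into sub-disks according to the parity of the exponent (only one sub-disk meets $\Q_2$ in the odd case, all do in the even case). Second, from $g(z_1)-g(z_2)=(x_1-x_2)(z_2-z_1)/[(z_1-x_1)(z_2-x_1)]$ we obtain the M\"obius distance formula
\[
|g(z_1)-g(z_2)|_2 \;=\; \frac{r_0\,|z_1-z_2|_2}{|z_1-x_1|_2\,|z_2-x_1|_2},
\]
which together with the bound $|z-x_1|_2\ge d(x_1,\Q_2)=2r_0$ from Lemma~\ref{distancep=2Lemma}(2) translates statements about $g(\mathbb{P}^1(\Q_2))$ back to arithmetic in $\Q_2$. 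The driving numerical fact is that $r_0=|\sqrt{\Delta}/c|_2\in 2^{\Z-1/2}$: indeed $\sqrt{\Delta}=\sqrt{\Delta/i}\cdot\sqrt{i}$ with $\sqrt{\Delta/i}\in\Q_2^*$ and $|\sqrt{i}|_2=\sqrt{2}/2$.

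For part~(1), since $K/\Q_2$ is ramified with residue field $\mathbb{F}_2$, one has $\SP(0,1)=\CD(1,\sqrt{2}/2)$. Lemma~\ref{rami-number} with $m=-1$ (odd) splits this into two closed disks of radius $1/2$; noting that $|\sqrt{i}|_2=\sqrt{2}/2>1/2$ places $1+\sqrt{i}\in\CD(1,\sqrt{2}/2)$ but outside $\CD(1,1/2)$, the decomposition is $\CD(1,1/2)\cup\CD(1+\sqrt{i},1/2)$, and only the first meets $\Q_2$. Specializing the distance formula with $z_2=\infty$ gives $|g(z)-1|_2=r_0/|z-x_1|_2\le 1/2$ for $z\in\Q_2$; together with $g(\infty)=1$ this yields $g(\mathbb{P}^1(\Q_2))\subset\CD(1,1/2)$, proving (1). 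For (2), Lemma~\ref{rami-number} with $m=-2$ (even) splits $\CD(1,1/2)$ into two sub-disks of radius $\sqrt{2}/4$, both meeting $\Q_2$; the first contains $g(\infty)=1$, while direct calculation gives $g((a-d)/(2c))=-1$, and $|1-(-1)|_2=1/2>2^{-3/2}$ forces $-1$ into the second sub-disk, so both meet $S:=g(\mathbb{P}^1(\Q_2))$.

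For part~(3), fix $y_0=g(z_0)\in\CD\cap S$. The sub-disk containing $y_0$ automatically meets $S$, so the count reduces to whether some $y=g(z)\in S$ satisfies $|y-y_0|_2=2^{-n/2}$. A case split on $|z-x_1|_2$ versus $|z_0-x_1|_2$ yields: when equal, $|g(z)-y_0|_2=r_0|z-z_0|_2/|z_0-x_1|_2^2\in\{0\}\cup 2^{\Z-1/2}$ (using that $|z_0-x_1|_2^2\in 2^\Z$); when unequal, an ultrametric reduction gives $|g(z)-y_0|_2=r_0/\min(|z-x_1|_2,|z_0-x_1|_2)\in\{1/2\}\cup 2^{\Z-1/2}$ (using $\min\in\{2r_0\}\cup 2^\Z$). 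Hence the achievable values of $|g(z)-y_0|_2$ lie in $\{0,1/2\}\cup 2^{\Z-1/2}$. Since $n\ge 3$ forces $0<2^{-n/2}<1/2$, the target $2^{-n/2}$ is achievable iff $2^{-n/2}\in 2^{\Z-1/2}$, iff $n$ is odd. For even $n$ only one sub-disk meets $S$; for odd $n$ both do, yielding the count $2^{\lfloor(n+1)/2\rfloor-\lfloor n/2\rfloor}$.

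The principal difficulty is the existence direction for odd $n$ in part~(3): exhibiting an explicit $z\in\mathbb{P}^1(\Q_2)$ realizing $|g(z)-y_0|_2=2^{-n/2}$. This requires splitting by the position of $z_0$---$z_0=\infty$, $z_0$ in ``Case A'' with $|z_0-\alpha|_2\le\sqrt{2}r_0$ so that $|z_0-x_1|_2=2r_0$, or ``Case B'' with $|z_0-\alpha|_2>2r_0$ so that $|z_0-x_1|_2=|z_0-\alpha|_2\in 2^\Z$---and within each sub-case choosing $z$ either in the same branch (equal $|z-x_1|_2$, solving $|z-z_0|_2=|z_0-x_1|_2^2\cdot 2^{-n/2}/r_0$) or in a different branch (suitable $|z-x_1|_2$ with $r_0/\min$ hitting $2^{-n/2}$). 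Verifying that the hypothesis $n\ge 3$ always guarantees the target value lies in the admissible range of $|\cdot|_2$ on $\Q_2$, so that $z$ genuinely exists and stays in the intended branch, constitutes the main bookkeeping task.
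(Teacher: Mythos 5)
Your approach is recognizably the same underlying idea as the paper's (the structural Lemma~\ref{rami-number} on how $\Q_2$ sits inside a ramified quadratic extension, the distance bound $|z-x_1|_2\ge 2r_0$ from Lemma~\ref{distancep=2Lemma}, and the half-integer shift that $r_0\in 2^{\Z-1/2}$ introduces), but you execute it differently: you compute distances directly in $g(\P)$ via the cross-ratio identity
\[
|g(z_1)-g(z_2)|_2=\frac{r_0\,|z_1-z_2|_2}{|z_1-x_1|_2\,|z_2-x_1|_2},
\]
whereas the paper works on the $K$-side, using Proposition~\ref{disk} (M\"obius maps carry $\P^1$-disks to $\P^1$-disks bijectively) to pull a disk $\CD$ of $g(\P)$ back to $g^{-1}(\CD)\subset K$ and then invoking Lemma~\ref{rami-number} to count sub-disks of $g^{-1}(\CD)$ meeting $\Q_2$. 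Your parts (1) and (2) are complete and correct; the identification $g\bigl((a-d)/(2c)\bigr)=-1$ is a nice clean way to get the second sub-disk in (2).

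The issue is in part (3). You establish only the \emph{containment} that the achievable values of $|g(z)-y_0|_2$ lie in $\{0,1/2\}\cup 2^{\Z-1/2}$, which proves the ``only one sub-disk'' conclusion for even $n$, and then you assert that $2^{-n/2}$ ``is achievable iff $2^{-n/2}\in 2^{\Z-1/2}$'' without actually producing a witness $z$ for odd $n$. You acknowledge this explicitly (``the principal difficulty is the existence direction\ldots constitutes the main bookkeeping task''), so as written the argument is an outline rather than a proof. The plan you sketch — case-split on $|z_0-x_1|_2\in\{2r_0\}\cup 2^{\Z}_{>2r_0}$ and then choose $z$ in the same or a different ``branch'' — does go through after checking that the required value $|z-z_0|_2$ or $|z-x_1|_2$ lands in $2^{\Z}$ and respects the needed strict inequality (and one must also handle the boundary case $|z_0-x_1|_2 = r_0\cdot 2^{n/2}$, where $z=\infty$ saves the day). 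But this is precisely what the paper's route avoids: because $g$ is a bijection of $\P^1(K)$-disks, the count of sub-disks of $\CD$ meeting $g(\P)$ is literally the count of sub-disks of $g^{-1}(\CD)$ meeting $\Q_2$, and Lemma~\ref{rami-number} hands you that count outright — no existence argument needed, only a parity check on the radius $2^{-n/2}\cdot r_0/d_0^2$ of $g^{-1}(\CD)$, which flips with the parity of $n$ because $r_0\in 2^{\Z-1/2}$. So the gap you flag is genuine but fillable; the cleaner resolution is the paper's pull-back-and-count rather than the forward distance analysis.
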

\begin{proof}
\indent {\rm (1)} By  Proposition \ref{disk}, we have $g(\PK\setminus \CD(x_{1},\sqrt{2}r_{0}))=\CD(1,1/2).$
Lemma \ref{distancep=2Lemma} shows that $\CD(x_1,\sqrt{2}r_0)\cap \Q_{2}=\emptyset$. Observe that $\CD(1,\sqrt{2}/2)=\CD(1,1/2)\cup \CD(1+\sqrt{i},1/2)$ and
$\mathbb{P}^{1}(\Q_{2})\subset \PK\setminus \CD(x_{1},\sqrt{2}r_{0})$. So
 $$\CD(1+\sqrt{i},1/2)\cap g(\mathbb{P}^{1}(\Q_{2}))=\emptyset  \mbox{ and } g(\mathbb{P}^{1}(\Q_{2}))\subset \CD(1,1/2).$$

\indent {\rm (2)} By Proposition \ref{disk},
$$g(\PK\setminus \CD(x_{1},2r_{0}))=\CD(1,\sqrt{2}/4) \mbox{~and~} g(\SP(x_1,2r_{0}))=\SP(1,1/2) .$$
By Lemma \ref{distancep=2Lemma}, $\SP(x_1,2r_{0})\cap\Q_{2}\neq \emptyset$. Observe that $\CD(1,1/2)=\CD(1,\sqrt{2}/4)\cup \SP(1,1/2)$,
where $\SP(1,1/2)$ is actually a closed disk of radius $\sqrt{2}/4$. So we can conclude.\\
\indent {\rm (3)}
Observe that $r_0=p^{m/2}$ for some odd integer $m$.  Using Proposition \ref{disk}, Lemma \ref{rami-number} and the analysis similar to that in the proof of  Lemma \ref{induction}, we obtain the conclusion.
\end{proof}

\begin{proof}[Proof of Theorem \ref{decompositionsqrt2}]
 We need only study the dynamical system $(g(\mathbb{P}^{1}(\Q_{2})),\psi)$, since $(\mathbb{P}^{1}(\Q_{2}),\phi)$ is conjugate to $(g(\mathbb{P}^{1}(\Q_{2})),\psi)$ by $g=\frac{x-x_2}{x-x_1}$. It is easy to see that $|a+d|_{2}\neq |\sqrt{\Delta}|_{2}$. Thus we distinguish two cases.\\
\indent {\rm (1)} Assume  $|a+d|_{2}> |\sqrt{\Delta}|_{2}$.  Write
$$\lambda=1+\frac{2\sqrt{\Delta}}{a+d-\sqrt{\Delta}}.$$
We have $v_{\pi}(\lambda-1)=2+v_{\pi}(\sqrt{\Delta})-v_{\pi}(a+d)\geq 3$ and $v_{\pi}(\lambda+1)=2$.
By Proposition \ref{affine} and Remark \ref{remark}, $(\SP(0,1),\psi)$ is decomposed into  $2^{v_{\pi}(\lambda-1)-1}$ subsystems $(B_i,\psi)$ of type $(1,2)$
at level $v_{\pi}(\lambda-1)$, each $B_i$ is a $\psi$-invariant clopen set which is a closed disk of radius $2^{-v_{\pi}(\lambda-1)}$.
Observe that $v_{\pi}(\lambda-1)$ is odd. By Lemma \ref{ramisqrt2}, there are $2^{(v_{\pi}(\lambda-1)-1)/2}$ such clopen sets of  type $(\ell,2)$ intersecting $g(\mathbb{P}^{1}(\Q_{2}))$.

If $B_{i}\cap g(\mathbb{P}^{1}(\Q_{2}))\neq \emptyset$, analysis similar to that in the proof of Theorem \ref{rami-decomposition-p>2} implies that the dynamical system
$(B_{i}\cap g(\mathbb{P}^{1}(\Q_{2})),\psi)$ is minimal.

So the dynamical system $(\mathbb{P}^{1}(\Q_{2}),\phi)$ is decomposed into $2^{(v_{\pi}(\lambda-1)-1)/2}$ minimal subsystems and each minimal system
is conjugate to  the
adding machine on the odometer $\mathbb{Z}_{(p_s)}$  with $(p_s)=(1, 2, 2^2,\cdots)$.\\
\indent {\rm (2)}
Assume  $|a+d|_{2}< |\sqrt{\Delta}|_{2}$.  Write
$$\lambda=-1+\frac{2(a+d)}{a+d-\sqrt{\Delta}}.$$
We get  $v_{\pi}(\lambda+1)=2+v_{\pi}(\sqrt{\Delta})-v_{\pi}(a+d)\geq 3$ and $v_{\pi}(\lambda-1)=2$.

By Proposition \ref{affine} and Remark \ref{remark}, $(\SP(0,1),\psi)$ is decomposed into  $2$ subsystems $(B_i,\psi)$ of type $(1,\vec{E})$ at level $2$ with
 $\vec{E}=(v_{2}(\lambda+1),2,2,\cdots)$, and each $B_{i}$ is  a $\psi$-invariant closed disk of radius $2^{-1}$, where $i=1\mbox{~or~} 2$.
 By Lemma \ref{ramisqrt2}, there is a unique such closed disk which intersects $g(\mathbb{P}^{1}(\Q_{2}))$. Without loss of generality, we may suppose that $ B_{1}\cap g(\mathbb{P}^{1}(\Q_{2}))\neq \emptyset$.

Being of type $(1,\vec{E})$ at level $2$, the dynamics $(B_1,\psi)$ is decomposed into $2^{(v_{\pi}(\lambda+1)-1)}$ subsystems
 of type $(2,2)$ at level $v_{\pi}(\lambda^{2}-1)$. By Lemma   \ref{ramisqrt2}, there are $2^{(v_{\pi}(\lambda+1)-1)/2}$ such clopen sets of
 type $(\ell,2)$ intersecting $g(\mathbb{P}^{1}(\Q_{2}))$.  If $(C,\psi)$ is such a subsystem satisfying $C\cap g(\mathbb{P}^{1}(\Q_{2}))\neq \emptyset$,
analysis similar to that in the proof of Theorem \ref{rami-decomposition-p>2} implies that the dynamical system $(C\cap g(\mathbb{P}^{1}(\Q_{2})),\psi)$ is minimal.

So the dynamical system $(\mathbb{P}^{1}(\Q_{2}),\phi)$ is decomposed into $2^{(v_{\pi}(\lambda+1)-1)/2}$ minimal subsystems and each minimal system
is conjugate to  the
adding machine on the odometer $\mathbb{Z}_{(p_s)}$  with $(p_s)=(1, 2, 2^2,\cdots)$.\\
\end{proof}

\subsection{Proof of Theorem \ref{decompositionsqrt3}}
Assume that $K=\Q_2(\sqrt{-1})$ or $\Q_2(\sqrt{3})$.  By Lemma \ref{unramifiedextension}, $K$ is a ramified quadratic extension of $\Q_{2}$. We also have
$\SP(0,1)=\CD(1,\sqrt{2}/2)$.
\begin{lemma}\label{numbersqrt3or-1}We have \\
\indent {\rm (1)} the disk $\CD(1,\sqrt{2}/2)$ consists of $2$ closed disks of radius $1/2$, all of which  intersect $g(\mathbb{P}^{1}(\Q_{2}))$;\\
\indent {\rm (2)} for $n\geq 2$, let $\CD\subset \CD(1,\sqrt{2}/2) $ be a closed disk of  radius $2^{-n/2}$. Then $\CD$ consists of $2$ closed disks of radius $2^{-(n+1)/2}$ and there are $2^{\lfloor n/2\rfloor-\lfloor (n-1)/2\rfloor}$ such disks which intersect $g(\mathbb{P}^{1}(\Q_{2}))$.
\end{lemma}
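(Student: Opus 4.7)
The plan is to reduce both assertions to a combinatorial count of $K$-disks meeting $\Q_2$ via the preimage under $g$, and then to invoke Lemma \ref{rami-number}(3). Since $g$ is a bijection of $\PK$ that sends $\PK$-disks to $\PK$-disks (Proposition \ref{disk}), a sub-disk $\CD' \subset \CD \subset \SP(0,1)$ meets $g(\mathbb{P}^{1}(\Q_{2}))$ if and only if $g^{-1}(\CD')$ meets $\mathbb{P}^{1}(\Q_{2}) = \Q_{2} \cup \{\infty\}$. The threshold for a $K$-disk $\CD(C,R)$ to touch $\Q_{2}$ is $R \ge d(C,\Q_{2})$, and Lemma \ref{distancep=2Lemma} gives $d(x_{1},\Q_{2}) = \sqrt{2}\,r_{0}$.

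For part (1), I would apply Lemma \ref{rami-number}(3) to the disk $\CD(0,2^{-1/2})$ with $m=-1$ and then translate by $1$ to obtain
\[
\CD(1,\sqrt{2}/2) = \CD(1,1/2) \cup \CD(\sqrt{i},1/2).
\]
The disk $\CD(1,1/2)$ meets $g(\mathbb{P}^{1}(\Q_{2}))$ because $g(\infty)=1$. For $\CD(\sqrt{i},1/2)$, I would write $g$ in centred form $g(y) = (u+\b)/(u-\b)$ with $u = y - (a-d)/(2c) \in \Q_{2}$ and $\b = \sqrt{\Delta}/(2c)$, so that $|\b|_{2}=2r_{0}$ and $d(\b,\Q_{2})=\sqrt{2}\,r_{0}$ by Proposition \ref{prop4}(3). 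Choosing $u \in \Q_{2}$ realising $|u-\b|_{2}=\sqrt{2}\,r_{0}$ (the infimum is attained in the ultrametric setting), a direct computation gives $|g(y)+1|_{2}=\sqrt{2}/2$, so $g(y) \in \SP(-1,\sqrt{2}/2)=\SP(1,\sqrt{2}/2) \subset \CD(\sqrt{i},1/2)$.

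For part (2), I would fix a closed disk $\CD \subset \CD(1,\sqrt{2}/2)$ of radius $2^{-n/2}$ with $\CD \cap g(\mathbb{P}^{1}(\Q_{2})) \neq \emptyset$, decompose $\CD = \CD_{1} \cup \CD_{2}$ by Lemma \ref{rami-number}(3), and compute the $\PK$-preimages using $g^{-1}(y)=x_{1}+(x_{1}-x_{2})/(y-1)$ together with Proposition \ref{disk}. In the prototypical subcase $1 \in \CD$, the preimage $g^{-1}(\CD_{1})$ contains $\infty$, so it automatically meets $\mathbb{P}^{1}(\Q_{2})$, while $g^{-1}(\CD_{2})$ is the $K$-disk of radius $r_{0}\cdot 2^{(n-1)/2}$ whose centre $C$ satisfies $d(C,\Q_{2}) = r_{0}\cdot 2^{(n-1)/2}$ when $n$ is even (with $n=2$ a small boundary case that behaves the same way) and $d(C,\Q_{2}) = r_{0}\cdot 2^{n/2}$ when $n$ is odd. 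Consequently $g^{-1}(\CD_{2})\cap\Q_{2} \neq \emptyset$ exactly when $n$ is even, so both sub-disks meet $g(\mathbb{P}^{1}(\Q_{2}))$ if $n$ is even and only one does if $n$ is odd. This gives precisely $2^{\lfloor n/2\rfloor - \lfloor (n-1)/2\rfloor}$ sub-disks meeting $g(\mathbb{P}^{1}(\Q_{2}))$, and the remaining positional subcases ($1 \notin \CD$) reduce to the same radius-versus-distance comparison.

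The main obstacle is the systematic handling of the positional subcases (whether $1 \in \CD$, whether $\CD \subset \CD(1,1/2)$ but $1 \notin \CD$, or $\CD \subset \CD(\sqrt{i},1/2)$), since in each case $g^{-1}(\CD)$ takes a different form: either a $K$-disk or the $\PK$-complement of a closed $K$-disk around $x_{1}$, with distinct centres and radii given by distinct explicit formulas. The technical heart is verifying that the radius of the $K$-disk $g^{-1}(\CD_{2})$ crosses the distance threshold $d(\cdot,\Q_{2})=\sqrt{2}\,r_{0}$ precisely at the parity transition between $n$ even and $n$ odd, so that the combinatorial count provided by Lemma \ref{rami-number}(3) matches the asserted formula.
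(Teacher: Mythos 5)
Your proof is correct and follows essentially the same route as the paper: for part (1), the decomposition $\CD(1,\sqrt{2}/2)=\CD(1,1/2)\cup\SP(1,\sqrt{2}/2)$ with $g(\infty)=1$ handling the first piece and the attained distance $d(x_1,\Q_2)=\sqrt{2}\,r_0$ from Lemma \ref{distancep=2Lemma} handling the second (your explicit computation with $g(y)=(u+\b)/(u-\b)$ is just this fact written out); for part (2), pulling back sub-disks via $g^{-1}$ using Proposition \ref{disk} and matching the parity count of Lemma \ref{rami-number}. The paper's own proof of (2) is likewise only a pointer to the arguments of Lemmas \ref{ramisqrt2} and \ref{induction}, so your treatment is at least as detailed.
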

\begin{proof}
\indent {\rm (1)}
 By Proposition \ref{disk}, $$g(\PK\setminus \CD(x_{1},r_{0}))=\CD(1,\sqrt{2}/2).$$
 Observe that $\CD(1,\sqrt{2}/2)=\CD(1,1/2)\cup \SP(1,\sqrt{2}/2)$ and $\SP(1,\sqrt{2}/2)$ is a closed disk with radius $1/2$. By Lemma
\ref{distancep=2Lemma} and Proposition \ref{disk}, $\SP(1,\sqrt{2}/2)\cap g(\P)\neq \emptyset$. Clearly, $\CD(1,1/2)\cap g(\P)\neq \emptyset$, since $g(\infty)=1$.\\
\indent {\rm (2)}
 We conclude by arguments similar to that in the proof of Lemma \ref{ramisqrt2}.
\end{proof}

\begin{lemma}\label{distancea+d}
Assume $\Q_2(\sqrt{\Delta})=\Q_2(\sqrt{i})$ where $i=-1$ or $3$. If $|a+d|_{2}=|\sqrt{\Delta}|_{2}$, then $$|a+d-\sqrt{\Delta}|_{2}=\frac{\sqrt{2}}{2}\cdot |a+d|_{2}.$$
\end{lemma}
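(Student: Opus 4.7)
The plan is to reduce the computation of $|a+d-\sqrt{\Delta}|_2$ to Proposition \ref{x-sqrt}(2), which already gives $|x-\sqrt{-1}|_2=|x-\sqrt{3}|_2=\sqrt{2}/2$ for any unit $x\in\Q_2$.

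First I would use the assumption $\Q_2(\sqrt{\Delta})=\Q_2(\sqrt{j})$ (with $j\in\{-1,3\}$) to write $\sqrt{\Delta}=\alpha\sqrt{j}$ for some $\alpha\in\Q_2^*$; this is the standard criterion for two square roots to generate the same quadratic extension, namely that the quotient $\Delta/j$ be a square in $\Q_2$. Since $|-1|_2=|3|_2=1$, we have $|\sqrt{j}|_2=1$, so $|\alpha|_2=|\sqrt{\Delta}|_2$, and by hypothesis this equals $|a+d|_2$. Consequently $y:=(a+d)/\alpha\in\Q_2$ satisfies $|y|_2=1$.

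Next I would factor out $\alpha$:
\[
 a+d-\sqrt{\Delta} \;=\; \alpha\bigl(y-\sqrt{j}\bigr),
\]
and apply Proposition \ref{x-sqrt}(2) to the unit $y$, obtaining $|y-\sqrt{j}|_2=\sqrt{2}/2$. Multiplying by $|\alpha|_2=|a+d|_2$ gives
\[
|a+d-\sqrt{\Delta}|_2 \;=\; |\alpha|_2\cdot\tfrac{\sqrt{2}}{2} \;=\; \tfrac{\sqrt{2}}{2}\cdot|a+d|_2,
\]
as required.

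There is essentially no obstacle here; the only point that requires a moment of care is the very first step, namely writing $\sqrt{\Delta}=\alpha\sqrt{j}$ with $\alpha\in\Q_2$. This is where the hypothesis $\Q_2(\sqrt{\Delta})=\Q_2(\sqrt{j})$ is used in a nontrivial way, and it is the only input beyond Proposition \ref{x-sqrt}(2) needed to finish.
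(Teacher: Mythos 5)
Your proof is correct and follows the same route as the paper, which simply declares the lemma an immediate consequence of Proposition \ref{x-sqrt}(2); you have usefully filled in the implicit rescaling step $\sqrt{\Delta}=\alpha\sqrt{j}$ with $|\alpha|_2=|a+d|_2$ that makes that proposition applicable.
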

\begin{proof} It is a immediate consequence of  the second assertion of Proposition \ref{x-sqrt}.
%Write
%$|a+d-\sqrt{\Delta}|_{2}=|a+d|_{2}|1-\frac{\sqrt{\Delta}}{a+d}|_{2}.$
%Observe that $\sqrt{\Delta/i}\in \Q_{2}$, which follows from $\Q_2(\sqrt{\Delta})=\Q_2(\sqrt{i})$.
%So we have
%$$\frac{\sqrt{\Delta}}{(a+d)}= \frac{\sqrt{\Delta/i}}{(a+d)}\cdot\sqrt{i}$$
%with $\frac{\sqrt{\Delta/i}}{(a+d)}\in \Q_{2}$ and $\frac{|\sqrt{\Delta/i}|_{2}}{|(a+d)|_{2}}=1$.
%By Proposition \ref{x-sqrt},
%$$|1-\frac{\sqrt{\Delta/i}}{(a+d)}\cdot\sqrt{i}|_2=|\frac{(a+d)}{\sqrt{\Delta/i}}-\sqrt{i}|_2=\sqrt{2}/2.$$
\end{proof}

\begin{proof}[Proof of Theorem \ref{decompositionsqrt3}]
 We are going to study the dynamical system  $(g(\mathbb{P}^{1}(\Q_{2})),\psi)$, since $(\mathbb{P}^{1}(\Q_{2}),\phi)$ is conjugate to $(g(\mathbb{P}^{1}(\Q_{2})),\psi)$ by $g=\frac{x-x_2}{x-x_1}$.  We distinguish the following three cases.\\
\indent {\rm (1)}
 Assume  $|a+d|_{2}= |\sqrt{\Delta}|_{2}$.  Write
$$\lambda=1+\frac{2\sqrt{\Delta}}{a+d-\sqrt{\Delta}}.$$
By Lemma \ref{distancea+d}, $v_{\pi}(\lambda-1)=2+v_{\pi}(\sqrt{\Delta})-v_{\pi}(a+d-\sqrt{\Delta})=1$ and $v_{\pi}(\lambda+1)=1$.
For $n\geq 2$, it is easy to check $v_{\pi}(\lambda^{2^{n}}+1)=2$.
By Proposition \ref{affine}, $(\SP(0,1),\psi)$ is  of type $(2,\vec{E})$ with $\vec{E}=(v_{\pi}(\lambda^{2}+1),2,2,\cdots)$
at level $2$.

Let $\sigma=(x_{1},x_{2})$
be the cycle  at level $2$. So the cycle $\sigma$ will grow.  Let $\sigma^{1}=(y_{1},\cdots,y_{4})$ be the lift of $\sigma$  at level $3$.
 Then $\sigma^{1}$  splits $v_{\pi}(\lambda^{2}+1)-1$ times and  all the descendants of $\sigma^{1}$  at level $v_{\pi}(\lambda^{4}-1)$ grow.
By a simple calculation, $$\lambda^{2}+1=\frac{2(a+d)^{2}+2\Delta}{(a+d-\sqrt{\Delta})^{2}}.$$
Lemma \ref{distancea+d} leads to $v_{\pi}(a+d-\sqrt{\Delta})=1+ v_{\pi}(a+d)$. Since $v_{\pi}(2)=2$ and
 $v_{\pi}((a+d)^{2}+\Delta)\geq v_{\pi}(a+d)^{2}+2 $ are even, $v_{\pi}(\lambda^{2}+1)\geq 2$ is also an even number.

By Proposition \ref{affine}, the dynamics $(\SP(0,1),\psi)$ is decomposed into $2^{{v_{\pi}(\lambda^{2}+1)-1}}$ subsystems $(B_i, \psi)$
of type $(4,e)$ at level $v_{\pi}(\lambda^{2}+1)+2$.  By Lemma \ref{numbersqrt3or-1}, there are $2^{({v_{\pi}(\lambda^{2}+1)-2})/2}$ such components of type
$(4,e)$ which intersect $g(\mathbb{P}^{1}(\Q_{2})$.
If $B_{i}\cap g(\mathbb{P}^{1}(\Q_{2}))\neq \emptyset$, analysis similar to that in the proof of Theorem \ref{rami-decomposition-p>2} implies that the dynamical system
$(B_{i}\cap g(\mathbb{P}^{1}(\Q_{2})),\psi)$ is minimal.

So the dynamical system $(\mathbb{P}^{1}(\Q_{2}),\phi)$ is decomposed into $2^{(v_{\pi}(\lambda^{2}+1)-2)/2}$ minimal subsystems and each minimal system
is conjugate to  the
adding machine on the odometer $\mathbb{Z}_{(p_s)}$  with $(p_s)=(1, 2, 2^2,\cdots)$.\\
\indent {\rm (2)}
 Assume  $|a+d|_{2}> |\sqrt{\Delta}|_{2}$.  Write
$$\lambda=1+\frac{2\sqrt{\Delta}}{a+d-\sqrt{\Delta}}.$$
We obtain $v_{\pi}(\lambda-1)=2+v_{\pi}(\sqrt{\Delta})-v_{\pi}(a+d)\geq 4$ and $v_{\pi}(\lambda+1)=2$.
 We conclude by arguments similar to that in the proof of Theorem \ref{decompositionsqrt2}.\\
\indent {\rm (3)}
 Assume  $|a+d|_{2}< |\sqrt{\Delta}|_{2}$.  Write
$$\lambda=-1+\frac{2(a+d)}{a+d-\sqrt{\Delta}}.$$
We have $v_{\pi}(\lambda+1)=2+(v_{\pi}(a+d))-v_{\pi}\sqrt{\Delta}\geq 4$ and $v_{\pi}(\lambda-1)=2$.
We also conclude by arguments similar to that in the proof of Theorem \ref{decompositionsqrt2}.
\end{proof}

%Following the above argument and the proof of Theorem 3 in \cite{FanLiao11}, it is easy to show that each minimal subsystem is conjugate to  the
%adding machine on the odometer $\mathbb{Z}_{(p_s)}$  with $(p_s)=(1,2,2^2,\cdots)

\section{Invariant measure}\label{inv}
As we have seen when $\phi$ admits no fixed point in $\Qp$ and $\phi^{n}\neq  id$ for all positive integer $n$,
the dynamics $(\P,\phi)$ is decomposed into a finite number of minimal subsystems. In this section, we determine the invariant probability measures for each minimal subsystem.

Let $(X_{1},d_{1})$ and $(X_{2},d_{2})$ be metric spaces, and let $\mathcal{F}$ be a family of maps from $X_{1}$ to $X_{2}$.
The collection $\mathcal{F}$ is said to be \emph{equicontinuous} on $X_1$, if for every $x\in X_1$ and  every $\epsilon >0$, there exists a $\delta>0$  such that
$$d_{1}(x,y)<\delta\Longrightarrow  d_{2}(f(x),f(y)) \mbox{~for every $f\in \mathcal{F}$}.$$

For each $g\in {\rm{PGL}}(2,\Q_p)$, there exist a  number $C>0$ such that
$$\rho(g(x), g(y))\leq \rho(x,y)\mbox{~for all~ } x,y \in \Qp,$$ (see Theorem 2.14 of \cite{Silverman-dynamics-book}).

Since the dynamics $(\P,\phi)$ is
conjugate to $(g(\P),\lambda x)$ with $|\lambda|_{p}=1$ by a conjugacy $g\in {\rm{PGL}}(2,\Q_p)$ defined by $g(x)=\frac{x-x_2}{x-x_1}$, the dynamics $\phi: \P\rightarrow \P$ is equicontinuous %(see Definition \ref{equicont})
with respect to the chordal metric, where the dynamics is equicontinuous means the family of iterates
$\{\phi^{i}\}_{i\geq 1}$ is
equicontinuous on $\P$.
 Theorem \ref{uniquelyergodic} below shows that for each minimal subsystem there exists a unique invariant (ergodic) measure with the minimal component as its support.

%\begin{definition}\label{equicont}
%The collection $\mathcal{F}$ is said to be
%\emph{equicontinuous}  at a point $\alpha_{0}\in X_{1}$  if for every $\epsilon >0$ there exists a $\delta>0$  such that
%$$d_{1}(\alpha_{0},\alpha)<\delta\Longrightarrow  d_{2}(f(\alpha_{0}),f(\alpha)) \mbox{~for every $f\in \mathcal{F}$}.$$
%The collection $\mathcal{F}$ is equicontinuous on a subset $U\subset X_{1}$ if it is equicontinuous at every point of $U$.
%For an  individual map $T:X\mapsto X$ from a set to itself, we say that $T$ is \emph{equicontinuous} at $\alpha_{0}\in X$ if  the collection of  iterates
% $\{T^{n}\}_{n\geq 1}$ is equicontinuous at $\alpha_{0}$.
%\end{definition}

%Let $T:X\mapsto X$ be an equicontinuous transformation on a compact metric space $X$. The following theorem says that minimality, uniquely ergodicity and ergodicity are equivalent.
\begin{theorem}[\cite{Oxtoby}]\label{uniquelyergodic}
Let $X$ be a compact metric space and $T:X\mapsto X$ be an equicontinuous transformation. Then the following statements are
equivalent:\\
\noindent{\rm (1)} $T$ is minimal on $X$.\\
\noindent{\rm (2)} $T$ is uniquely ergodic on $X$.\\
\noindent{\rm (3)} $T$ is ergodic for any ( or some) invariant probability measure with $X$ as its support.
\end{theorem}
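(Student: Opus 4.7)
The plan is to establish the cycle of implications $(1) \Rightarrow (2) \Rightarrow (3) \Rightarrow (1)$, exploiting throughout the fact that equicontinuity of $\{T^n\}_{n \geq 0}$ endows the system with enough rigidity to collapse the usual gap between topological and measure-theoretic notions. The conceptual engine is that, by the Arzelà-Ascoli theorem, the closure $G := \overline{\{T^n : n \geq 0\}}$ inside $C(X,X)$ (under uniform convergence) is a compact monoid which, together with equicontinuity, turns out to be a compact topological group acting continuously on $X$. This ``Ellis group'' is what reduces the equicontinuous setting essentially to the setting of rotations on a compact group.

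For $(1) \Rightarrow (2)$, I would first construct $G$ as above and verify that $G$ acts continuously and transitively on $X$ (transitivity coming from minimality: the orbit $\{T^n x_0\}$ is dense, hence $Gx_0 = X$). Then $X$ is a homogeneous space $G/H$ where $H$ is the stabilizer of a base point, and pushing Haar measure on $G$ down to $X$ gives a $G$-invariant probability measure $\mu$; in particular $\mu$ is $T$-invariant. Uniqueness follows by a standard Cesàro-averaging argument: if $\nu$ is any $T$-invariant probability measure, then for any $f \in C(X)$ the averages $\frac{1}{N}\sum_{n=0}^{N-1} f \circ T^n$ converge uniformly on $X$ (this is where equicontinuity plus minimality is used, via an Ascoli-type argument on the family of these averages) to a constant $c(f)$, and integrating against $\nu$ forces $\int f\, d\nu = c(f) = \int f\, d\mu$.

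For $(2) \Rightarrow (3)$, the implication is soft: unique ergodicity gives exactly one invariant probability measure $\mu$, and any ergodic decomposition of $\mu$ must be trivial, so $\mu$ is ergodic. Since $(1) \Rightarrow (2)$ yields a $\mu$ whose support is all of $X$ (the support is closed and $T$-invariant, hence equals $X$ by minimality, which we already have), condition (3) holds for this $\mu$ and automatically for any invariant probability measure with full support, since there is only one such measure. For $(3) \Rightarrow (1)$: fix an ergodic invariant measure $\mu$ with $\mathrm{supp}\,\mu = X$. By Birkhoff's ergodic theorem, for $\mu$-a.e.\ $x$ the orbit $\{T^n x\}$ equidistributes with respect to $\mu$ and is in particular dense in $\mathrm{supp}\,\mu = X$. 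Now invoke equicontinuity to upgrade ``one dense orbit'' to ``every dense orbit'': given any $y \in X$ and any $\varepsilon > 0$, choose $\delta$ from the equicontinuity of $\{T^n\}$, pick $x$ with dense orbit and $d(x,y) < \delta$, and conclude $d(T^n x, T^n y) < \varepsilon$ for all $n$, so the orbit of $y$ is $\varepsilon$-dense, hence dense. Thus $T$ is minimal.

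The main obstacle is the implication $(1) \Rightarrow (2)$, specifically the verification that the Cesàro averages converge uniformly (equivalently, that the Ellis semigroup is actually a group acting equicontinuously). The delicate point is that equicontinuity alone gives a compact \emph{semigroup}, and one needs minimality to ensure every element has an inverse and that the averaging trick produces a uniform (not merely pointwise) limit; all subsequent steps are then routine consequences of the group structure on $G$ and the uniqueness of Haar measure.
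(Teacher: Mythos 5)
The paper offers no proof of this theorem — it is cited directly from Oxtoby's 1952 survey on ergodic sets — so your proposal must stand on its own merits rather than be compared to an internal argument. Your implications $(1) \Rightarrow (2)$ and $(3) \Rightarrow (1)$ are correct and follow standard lines: the Ellis group together with Haar measure gives existence of an invariant measure, uniform convergence of the Ces\`aro averages (forced by equicontinuity together with the fact that a continuous $T$-invariant function on a minimal system is constant) gives uniqueness, and in $(3) \Rightarrow (1)$ a Birkhoff-generic point has dense orbit in $\mathrm{supp}\,\mu = X$ while equicontinuity transports density from one orbit to every nearby orbit.

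There is, however, a genuine gap in $(2) \Rightarrow (3)$. To obtain (3) you must show that the unique invariant measure $\mu$ has full support, but you derive this only by invoking minimality, writing that we ``already have'' it. Inside the cycle $(1) \Rightarrow (2) \Rightarrow (3) \Rightarrow (1)$ you may not assume (1) while proving $(2) \Rightarrow (3)$: at that stage you have established only $(1) \Rightarrow (2)$, which is useless when your sole hypothesis is (2). As written, you prove $(1) \wedge (2) \Rightarrow (3)$, and the chain never closes to yield $(2) \Rightarrow (1)$. The cleanest repair is to prove $(2) \Rightarrow (1)$ directly: for an equicontinuous homeomorphism $T$ the closure $G$ of the iterates is a compact group acting on $X$, hence $\overline{\mathcal{O}(x)} = Gx$ is a minimal set for every $x$ and $X$ is partitioned into minimal sets; by Krylov--Bogolyubov each minimal set carries an invariant probability measure, so unique ergodicity forces a single minimal set, namely $X$. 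Finally, note that the hypothesis should read ``homeomorphism'' (or at least ``surjective''): without it the theorem is false, since $T(x)=x/2$ on $[0,1]$ has $\{T^n : n\geq 0\}$ equicontinuous and is uniquely ergodic (with invariant measure $\delta_0$) yet is not minimal. This is automatic for the paper's $\phi \in \mathrm{PGL}(2,\Q_p)$ but is not a consequence of one-sided equicontinuity alone.
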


Let $\mu_{0}$ be the Haar measure on $\Zp$. We define two probability measures  $\hat{\mu}$ and $\overline{\mu}$ on the
projective line $\P$. For each Borel measurable set $S\subset\P$, let \[S_{1}=\Zp\cap S, \qquad S_{2}=(\P\setminus\Zp)\cap S.\]
Define $\xi: \P\rightarrow \P $ by $\xi(z)=1/z$.
 It is clear that $\xi^{-1}(S_{2})\subset \Zp$.  The image of
$\mathbb{P}^1(\mathbb{Q}_p)\setminus \mathbb{Z}_p$ under $\xi$
is the ball $p\Z_p$. So
$\mu_0 \circ \xi^{-1} (\P\setminus \Z_p) =1/p.$ The measure  $\hat{\mu}$  is then defined by
$$\hat{\mu}(S):=\frac{p}{p+1} \big(\mu_{0}(S_1)+ \mu_{0}\circ \xi^{-1}(S_2)\big).$$
The measure  $\overline{\mu}$ is defined by
 $$\overline{\mu}(S):=\frac{1}{2}\mu_{0}(S_1)+ \frac{p}{2} \cdot \mu_{0} \circ \xi^{-1}(S_2).$$

First, we discuss the case when $p\geq 3$.

Assume that $\Qp(\sqrt{\Delta})$ is an unramified quadratic extension of $\Qp$. Since $p\geq 3$, it follows that  $|x_{1}-\frac{a-d}{2c}|_{p}=|\frac{\sqrt{\Delta}}{2c}|_p=|\frac{\sqrt{\Delta}}{c}|_p=r_0$.  Then $\CD(x_{1},r_{0})=\CD(\frac{a-d}{2c},r_{0})$.
So $\CD(x_{1},r_{0})\cap \Qp=\CD(\frac{a-d}{2c},r_{0})\cap \Qp= \overline{D}(\frac{a-d}{2c},r_{0})$. Let $\eta \in\Qp$ be a point with $|\eta|_{p}=r_{0}$ and set $$h(x)=\eta x+\frac{a-d}{2c}.$$
Then $h^{-1}(\overline{D}(\frac{a-d}{2c},r_{0}))=\overline{D}(0,1)$. According to the proof of Theorem \ref{mindecunrami}, it is easy to prove the following theorem.
\begin{theorem}\label{measure1}
Under the  assumptions of Theorem \ref{mindecunrami}, if  $(B,\phi)$ is a minimal subsystem of $(\P,\phi)$, then the unique invariant measure $\hat{\sigma}$
 of the system $(B,\phi)$ is defined as follows: for each measurable subset $A\subset B$,
$$\hat{\sigma}(A)=\frac{\hat{\mu}\circ h^{-1}(A)}{\hat{\mu}\circ h^{-1}(B)}.$$
\end{theorem}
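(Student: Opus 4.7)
The plan is to invoke unique ergodicity and then verify that the candidate $\hat\sigma$ is $\phi$-invariant. Since the conjugacy $g(x)=(x-x_2)/(x-x_1)$ sending $\phi$ to the multiplication $x\mapsto\lambda x$ with $|\lambda|_p=1$ is chordal-Lipschitz (as noted in the excerpt), the iterate family $\{\phi^n\}$ is equicontinuous on $\P$, so Theorem~\ref{uniquelyergodic} applies to the minimal subsystem $(B,\phi)$. Hence there is a unique $\phi$-invariant probability measure supported on $B$, and as $\hat\sigma$ is by construction a probability measure concentrated on $B$, the proof reduces to checking $\hat\sigma\circ\phi^{-1}=\hat\sigma$. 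Unwinding the definition, this amounts to $\hat\mu(h^{-1}\phi^{-1}A)=\hat\mu(h^{-1}A)$ for every Borel $A\subset B$, equivalently that the conjugated homographic map $\tilde\phi:=h^{-1}\circ\phi\circ h$ preserves $\hat\mu$ on the clopen set $h^{-1}(B)\subset\P$.

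To verify this I would exploit the odometer conjugacy furnished by Theorem~\ref{mindecunrami}: the system $(B,\phi)$ is conjugate to the adding machine on $\Z_{(\ell,\ell p,\ell p^2,\dots)}$, whose unique invariant probability measure is the Haar measure that assigns uniform mass to cylinders at each level. Correspondingly, at every level $s$ the set $B$ splits into finitely many clopen pieces cyclically permuted by $\phi$. The role of the affine normalization $h$ is to send the disk $\overline{D}\bigl(\tfrac{a-d}{2c},r_0\bigr)$ onto $\Z_p$, so that the $h^{-1}$-image of each piece of $B$ is either a closed disk of radius $p^{-s}$ lying inside $\Z_p$ or, for the distinguished orbit passing through $\infty\in\P$, the image under $\xi$ of such a disk. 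A direct count using Lemma~\ref{unrami-number}, which asserts that exactly $p$ of the $p^2$ sub-disks at each unramified splitting step meet $\Qp$, shows that after applying the two-part weighting in the definition of $\hat\mu$ every one of these clopen pieces receives the same mass; dividing by $\hat\mu\circ h^{-1}(B)$ then yields a uniform distribution on the level-$s$ cylinders of the odometer, hence equals the Haar measure and is $\phi$-invariant.

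The main obstacle is this level-by-level accounting: one must match the geometric shape of each clopen piece of $B$ (genuine $\Qp$-disk versus $\xi$-image of a disk) with the two-part definition of $\hat\mu$, and check that the weighting $p/(p+1)$ on $\Z_p$ versus $1/(p+1)$ on $\P\setminus\Z_p$ is exactly what equalizes the pieces' masses. The divisibility $\ell\mid(p+1)$ from Theorem~\ref{mindecunrami} is what makes this balance work: it guarantees that at level $1$ exactly one of the $\ell$ cyclically permuted pieces of $B$ contains $\infty$, and the $(p+1)^{-1}$ factor in $\hat\mu$ precisely compensates the $p$-fold concentration of the remaining pieces in $\Z_p$.
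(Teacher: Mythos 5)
Your proposal is correct and follows the same overall strategy as the paper: equicontinuity plus Theorem \ref{uniquelyergodic} gives unique ergodicity on the minimal set, and the real content is verifying that $\hat{\mu}$, transported by $h$, is invariant under the conjugated homographic map. Where you differ is in how that verification is organized. The paper pushes everything to the multiplication side: it defines the uniform measure $\widetilde{\mu}$ on $g(\P)$ assigning mass $(p+1)^{-1}p^{-n+1}$ to each relative disk of radius $p^{-n}$, observes that this is trivially $\psi$-invariant because $\psi(x)=\lambda x$ with $|\lambda|_p=1$ is an isometry, and then checks the identity $\hat{\mu}=\widetilde{\mu}\circ g^{-1}$. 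You instead stay on $\P$ and check directly that the cyclically permuted clopen pieces of $B$ at each level carry equal $\hat{\mu}\circ h^{-1}$-mass, identifying the limit with Haar measure on the odometer. These amount to the same disk-by-disk computation, and both need Proposition \ref{disk} together with Lemmas \ref{unrami-number}--\ref{induction} to control the shapes and the count of the pieces. One descriptive slip in your plan: it is not only the distinguished piece through $\infty$ that lies outside $\Zp$ --- at level $s\geq 2$ there are many pieces contained in $\P\setminus\Zp$, namely $g^{-1}$-images of level-$s$ disks of $\CD(1,1/p)$ not containing $1$; each such piece is a disk of radius $p^{2m-s}$ centered at a point of norm $p^m$ with $m\geq 1$, hence the $\xi$-image of a disk of radius $p^{-s}$ in $p\Zp$ not containing $0$. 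The mass computation $\frac{p}{p+1}\,\mu_{0}\circ\xi^{-1}$ still returns $\frac{p}{p+1}p^{-s}$ for these, so the equal-mass conclusion survives, but the accounting must include them; likewise the remark that $\ell\mid(p+1)$ is what "makes the balance work" is a red herring --- the pieces have equal mass disk by disk, independently of how $\psi$ groups them into cycles.
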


\begin{proof}
Let $\varphi=h^{-1}\circ \phi \circ h$. Then $h^{-1}(x_1)$ and $h^{-1}(x_2)$ are the two fixed points of $\varphi$.
It is easy to check that $|h^{-1}(x_1)|_{p}=|h^{-1}(x_2)|_{p}=|h^{-1}(x_1)-h^{-1}(x_2)|_p=1$.

We assert that the measure $\hat{\mu}$ is $\varphi$-invariant. Since $(B,\phi)$ is a minimal, it follows that $(h^{-1}(B),\varphi)$ is minimal.  So  the measure $\hat{\mu}|_{h^{-1}(B)}$ (the restriction of measure $\hat{\mu}$ on $h^{-1}(B)$) is an invariant measure of dynamical system $(h^{-1}(B),\varphi)$. Thus the measure $\hat{\sigma}$ is an invariant measure
 of the system $(B,\phi)$.

Now we are going to prove our assertion. Actually, if the two fixed points $x_{1}, x_{2}$ of $(\P,\phi)$ satisfy that $|x_1|_p=|x_2|_p=|x_1-x_2|_p=1$, then $\hat{\mu}$ is $\phi$-invariant.  Since the dynamical system $(\P,\phi)$ is conjugate to the system $(g(\P),\psi)$ with $\psi(x)=\lambda x$ by $g(x)=\frac{x-x_2}{x-x_1}$.
For  a positive integer $n\geq 1$ and  $a\in g(\P) $, let $$\widetilde{D}(a,p^{n}):=\{x\in g(\P):|x-a|_{p}\leq r\}$$ be a disk of radius $p^{n}$
centred at $a$ in $g(\P)$. For each $\widetilde{D}(a,p^{n})\subset g(\P)$,  we define function
$\widetilde{\mu}(\widetilde{D}(a,p^{n}))=(p+1)^{-1}p^{-n+1}$. By Lemmas \ref{unramify1} and \ref{induction},  the function $\widetilde{\mu}$ can be extended to be a Borel probability measure on $g(\P)$. Since $\psi(x)=\lambda x$ with $|\lambda|_{p}=1$, it follows that
the measure $\widetilde{\mu}$ is $\psi$-invariant.
Also it is easy to check that $\hat{\mu}=\widetilde{\mu}\circ g^{-1}$.
So $\hat{\mu}$ is $\phi$-invariant, which completes the proof.
%We decompose $\P$ as
%$$\P=(\P\setminus\overline{D}(0,1)) \cup(\bigcup_{i=0}^{p-1}\overline{D}(i,1/p)).$$
%It is sufficient to show that for each closed  $\P$-disk $\overline{D}$ which is contain in $\P\setminus\overline{D}(0,1)$ or
%some $\overline{D}(i,1/p)$, we have $\hat{\mu}(D)=\hat{\mu}(\phi(D))$.
%We distinguish three cases.\\
%\indent(1)\\
%\indent(2)\\
%\indent(3)
\end{proof}

Now assume that $\Qp(\sqrt{\Delta})$ is an ramified quadratic extension of $\Qp$. It is easy to see that  $\CD(x_{1},r_{0})\cap \Qp= \overline{D}(\frac{a-d}{2c},p^{-1/2}\cdot r_{0})$. Let $\eta \in\Qp$ be a point with $|\eta|_{p}=p^{-1/2}\cdot r_{0}$ and set $$h(x)=\eta x+\frac{a-d}{2c}.$$
Then $h^{-1}(\overline{D}(\frac{a-d}{2c},p^{-1/2}\cdot r_{0}))=\overline{D}(0,1)$. According to the proof of Theorem \ref{rami-decomposition-p>2}, we can easily prove the following theorem.
\begin{theorem}Under the  assumptions of Theorem \ref{rami-decomposition-p>2}, if $(B,\phi)$ is a minimal subsystem of $(\P,\phi)$, the unique invariant measure $\overline{\sigma}$  of the system $(B,\phi)$ is defined as follows: for each measurable subset $A\subset B$,
$$\overline{\sigma}(A)=\frac{\overline{\mu}\circ h^{-1}(A)}{\overline{\mu}\circ h^{-1}(B)}.$$
\end{theorem}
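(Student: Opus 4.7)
The plan is to mirror the strategy from Theorem \ref{measure1}, reducing the problem to showing that $\overline{\mu}$ is invariant under the conjugated dynamics $\varphi := h^{-1} \circ \phi \circ h$. Once this is proved, the restriction of $\overline{\mu}$ to the $\varphi$-invariant clopen set $h^{-1}(B)$ has finite positive total mass, so normalizing gives an invariant probability measure for $(h^{-1}(B),\varphi)$; pulling back by $h$ yields exactly $\overline{\sigma}$. Because $(\P,\phi)$ is conjugate to the multiplication $\psi(x)=\lambda x$ with $|\lambda|_p=1$, it is equicontinuous for the chordal metric, and minimality of $(B,\phi)$ combined with Theorem \ref{uniquelyergodic} will then upgrade invariance to unique ergodicity.

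First I would compute the fixed points of $\varphi$. Since $h(x)=\eta x + \frac{a-d}{2c}$ with $|\eta|_p = p^{-1/2}r_0$, writing $\tilde{x}_i := h^{-1}(x_i) = (x_i - (a-d)/(2c))/\eta$ gives $|\tilde{x}_1|_p = |\tilde{x}_2|_p = |\tilde{x}_1 - \tilde{x}_2|_p = p^{1/2}$. The homographic map $g_\varphi(y) = (y-\tilde{x}_2)/(y-\tilde{x}_1)$ then conjugates $(\P,\varphi)$ to $(g_\varphi(\P),\psi)$, and since $h$ is $\Qp$-affine one checks $g_\varphi \circ h^{-1}=g$, so $g_\varphi(\P) = g(\P) \subset \SP(0,1)$. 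To show $\overline{\mu}$ is $\varphi$-invariant, I would construct a Borel probability measure $\widetilde{\mu}$ on $g_\varphi(\P)$ by declaring $\widetilde{\mu}(\widetilde{D}(a,p^{-n/2})) = \tfrac{1}{2}p^{-\lfloor n/2\rfloor}$ for every $n\geq 1$ and every disk of that radius meeting $g_\varphi(\P)$. Lemmas \ref{ramify1}--\ref{ramify3} make this rule consistent: an active disk of radius $p^{-n/2}$ splits into exactly $p^{\lfloor(n+1)/2\rfloor-\lfloor n/2\rfloor}$ active sub-disks of radius $p^{-(n+1)/2}$, and $\tfrac{1}{2}p^{-\lfloor n/2\rfloor}$ is the correct sum of the sub-masses. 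Since $|\lambda|_p=1$ the map $\psi$ permutes disks of each fixed radius, so $\widetilde{\mu}$ is $\psi$-invariant.

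It then remains to identify $\overline{\mu}$ with $\widetilde{\mu}\circ g_\varphi$ on $\P$, which transports $\psi$-invariance of $\widetilde{\mu}$ into $\varphi$-invariance of $\overline{\mu}$. The base identification is the computation $g_\varphi(y) - 1 = (\tilde{x}_1 - \tilde{x}_2)/(y-\tilde{x}_1)$: for $y\in\Zp$ one has $|y-\tilde{x}_1|_p = p^{1/2}$, so $|g_\varphi(y)-1|_p=1$ and $g_\varphi(\Zp)$ lies in some radius-$p^{-1/2}$ disk of $\SP(0,1)$ other than $\CD(1,p^{-1/2})$; whereas $|y|_p\geq p$ together with $g_\varphi(\infty)=1$ forces $g_\varphi(\P\setminus\Zp)\subset\CD(1,p^{-1/2})$. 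Both images carry $\overline{\mu}$-mass $\tfrac{1}{2}$ by the definition of $\overline{\mu}$, matching $\widetilde{\mu}$. Finer agreement proceeds by induction on $n$, using Lemma \ref{rami-number} to describe how $\Qp$-balls embed inside $K$-balls and matching this against the Haar decomposition of $\Zp$ and the $\xi$-transported decomposition of $p\Zp$. The main obstacle will be precisely this bookkeeping, because the ramified alternation between ``full'' and ``single-descendant'' refinements of $K$-disks must be aligned with the mixed density $\overline{\mu}$ built from $\mu_0$ on $\Zp$ and $\mu_0\circ\xi^{-1}$ on $\P\setminus\Zp$. Once this identification is established, $\overline{\mu}|_{h^{-1}(B)}$ is a $\varphi$-invariant finite measure of positive total mass, and Theorem \ref{uniquelyergodic} applied to the minimal equicontinuous system $(B,\phi)$ delivers $\overline{\sigma}$ as its unique invariant probability measure.
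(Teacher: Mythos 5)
Your proposal follows the paper's own strategy (which for this theorem is only sketched by reference to the unramified proof of Theorem \ref{measure1}), and the details you supply are correct. You correctly compute that $\tilde{x}_i=h^{-1}(x_i)$ satisfy $|\tilde{x}_1|_p=|\tilde{x}_2|_p=|\tilde{x}_1-\tilde{x}_2|_p=p^{1/2}$, that $g_\varphi\circ h^{-1}=g$ so $g_\varphi(\P)=g(\P)$, and your candidate measure $\widetilde{\mu}\bigl(\widetilde{D}(a,p^{-n/2})\bigr)=\tfrac12 p^{-\lfloor n/2\rfloor}$ is exactly the ramified analogue of the paper's $(p+1)^{-1}p^{-n+1}$ formula, consistent by Lemmas \ref{ramify1}--\ref{ramify3} and $\psi$-invariant since $\psi$ is an isometry permuting the active disks of each radius. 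Your coarse matching $\overline{\mu}(\Zp)=\overline{\mu}(\P\setminus\Zp)=\tfrac12$ against the two radius-$p^{-1/2}$ disks meeting $g(\P)$ is also right, and the finer matching (which you flag but do not carry out) does go through: for a $\Qp$-ball $\overline{D}(a,p^{-m})$ with $a\in\Zp$ the image $g_\varphi(\CD(a,p^{-m}))$ has radius $p^{-m-1/2}$, hence $\widetilde{\mu}$-mass $\tfrac12 p^{-m}$, matching $\overline{\mu}$; similar bookkeeping works on each sphere $|y|_p=p^k$ and for the tail $\{|y|_p>p^k\}\cup\{\infty\}$. This is the same level of rigor the paper itself offers, which simply declares the theorem to follow ``according to the proof of Theorem \ref{rami-decomposition-p>2}.''
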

\begin{example}\label{example1}
Let $p=3$ and consider the dynamical system $(\mathbb{P}^{1}(\Q_{3}),\phi)$ where $$\phi(x)=\frac{1}{x+1}.$$
\end{example}
Consider the equation $\phi(x)=x$, where discriminant
$\Delta=5$.
 By Lemma \ref{solution}, $\sqrt{5}\notin \Q_{3}$. Then $\phi$ has two fixed points $x_{1}=\frac{-1+\sqrt{5}}{2}$
and $x_{2}=\frac{-1-\sqrt{5}}{2}$ in  $K=\Q_{3}(\sqrt{5})$ which is an unramified quadratic extension of $\Q_{3}$.  On the other hand,
$$\lambda=\frac{1+\sqrt{5}}{1-\sqrt{5}}=-\frac{3+\sqrt{5}}{2}.$$
Then $$\lambda^{2}=\frac{7+3\sqrt{5}}{2}=-1 \ (\!\!\!\!\!\mod 3)$$ and
$$\lambda^{4}=\frac{47+21\sqrt{5}}{2}=1 \ (\!\!\!\!\!\mod 3), \quad v_{3}(\lambda^{4}-1)=1.$$
By Theorem \ref{mindecunrami},  the system $(\mathbb{P}^{1}(\Q_{3}),\phi)$ is minimal and conjugate to
the adding machine on the odometer $\mathbb{Z}_{(p_{s})}$, where
$$(p_{s})=(4,4\cdot3,4\cdot3^{2},\cdots).$$

By Theorem \ref{uniquelyergodic}, there exists a unique invariant probability measure with $\mathbb{P}^{1}(\Q_{3})$ as its support.
Notice that   $r_{0}=|x_{1}-x_{2}|_{3}=|\sqrt{5}|_{3}=1$ and $\overline{D}(-\frac{1}{2},1)=\overline{D}(0,1)$. By
Theorem \ref{measure1}, the invariant measure of system $(\mathbb{P}^{1}(\Q_{3}),\phi)$ is $\hat{\mu}$.

Now let us consider the case  $p=2$.

Assume that $K=\Q_{2}(\sqrt{-3})$. Notice that $2$ is a uniformizer of $\Q_{2}(\sqrt{\Delta})$. It is easy to see that
 $\CD(x_{1},r_{0})\cap \Q_{2}= \overline{D}(\frac{a-d- p^{v_{2}(\sqrt{\Delta})}}{2c},r_{0})$. Let $\eta \in\Q_{2}$ be a point with $|\eta|_{2}=r_{0}$ and set $$h(x)=\eta x+\frac{a-d- p^{v_{2}(\sqrt{\Delta})}}{2c}.$$
Then $h^{-1}(\overline{D}(\frac{a-d- p^{v_{2}(\sqrt{\Delta})}}{2c},r_{0}))=\overline{D}(0,1)$. According to the proof of Theorem \ref{decomp-p=2-unrami}, we have the following theorem.
\begin{theorem}\label{measure1}
Under the  assumptions of Theorem \ref{decomp-p=2-unrami}, if  $(B,\phi)$ is a minimal subsystem of $(\mathbb{P}^{1}(\Q_{2}),\phi)$, the unique invariant measure $\hat{\sigma}$ of the system $(B,\phi)$ is defined as follow:  for each measurable subset $A\subset B$,
$$\hat{\sigma}(A)=\frac{\hat{\mu}\circ h^{-1}(A)}{\hat{\mu}\circ h^{-1}(B)}.$$
\end{theorem}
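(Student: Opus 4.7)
The plan is to follow the proof of the analogous Theorem \ref{measure1} in the unramified $p\geq 3$ case, making the necessary adjustments for the $\Q_2(\sqrt{-3})$ setting via Lemma \ref{p=2unrami}. First I would introduce $\varphi = h^{-1}\circ\phi\circ h$, which is a homographic map on $\mathbb{P}^1(\Q_2)$ conjugate to $\phi$. Because the conjugacy $h$ is affine with coefficients in $\Q_2$, the system $(B,\phi)$ is conjugate to $(h^{-1}(B),\varphi)$, so it suffices to exhibit an invariant probability measure for $(h^{-1}(B),\varphi)$; the prescription $\hat\sigma(A)=\hat\mu\circ h^{-1}(A)/\hat\mu\circ h^{-1}(B)$ will then follow by pushing forward and renormalising.

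Next I would verify that the two fixed points $y_j := h^{-1}(x_j)$ of $\varphi$ in $K$ satisfy $|y_1|_2=|y_2|_2=|y_1-y_2|_2=1$. Here the choice of $\eta\in\Q_2$ with $|\eta|_2=r_0$ together with the shift $(a-d-p^{v_2(\sqrt\Delta)})/(2c)$ in the definition of $h$ is engineered precisely so that $\CD(y_1,1)\cap \Q_2 = \overline D(0,1)$, and the distance formula $d(x_j,\Q_2)=r_0$ from Lemma \ref{distancep=2Lemma}(1) (the $\Q_2(\sqrt{-3})$ case) gives the three equalities of norms.

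The core step is then the assertion: whenever $\varphi$ is a homographic map on $\mathbb{P}^1(\Q_2)$ whose two fixed points $y_1,y_2\in K=\Q_2(\sqrt{-3})$ satisfy $|y_j|_2=|y_1-y_2|_2=1$, the measure $\hat\mu$ is $\varphi$-invariant. To prove this I would pass to the multiplication model $g\circ \varphi\circ g^{-1}=\psi$, $\psi(x)=\lambda x$, with $g(y)=(y-y_2)/(y-y_1)$, and define a probability measure $\tilde\mu$ on $g(\mathbb{P}^1(\Q_2))\subset \SP(0,1)$ by assigning mass $1/(3\cdot 2^{n-1})$ to each closed disk of radius $2^{-n}$ that meets $g(\mathbb{P}^1(\Q_2))$. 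Lemma \ref{p=2unrami} provides exactly the consistency needed: at level $1$ there are three such disks partitioning $\SP(0,1)\cap g(\mathbb{P}^1(\Q_2))$, and each disk of radius $2^{-n}$ meeting $g(\mathbb{P}^1(\Q_2))$ splits into two disks of radius $2^{-n-1}$ still meeting $g(\mathbb{P}^1(\Q_2))$, so assigning half-mass at each refinement is compatible (Carath\'eodory extension). Because $|\lambda|_2=1$, the multiplication $\psi$ permutes the disks of radius $2^{-n}$ intersecting $g(\mathbb{P}^1(\Q_2))$ within $\SP(0,1)$, so $\tilde\mu$ is $\psi$-invariant.

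Finally I would check, by comparing the $\xi$-pullback description of $\hat\mu$ on $\mathbb{P}^1(\Q_2)\setminus \Z_2$ with the pushforward under $g$, that $\hat\mu = \tilde\mu\circ g^{-1}$; this is a direct computation on balls since both measures agree on generating clopens of the form $\overline D(a,2^{-n})\cap \Q_2$ and on images of $\PK\setminus \CD(a,r)$ under $g$. Invariance of $\hat\mu$ under $\varphi$ then follows, and restricting to $h^{-1}(B)$ and normalising yields the stated formula for $\hat\sigma$. Uniqueness is automatic from Theorem \ref{uniquelyergodic} once we know $(B,\phi)$ is minimal and equicontinuous and $\hat\sigma$ has $B$ as its support. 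The main obstacle is the bookkeeping in the third step: one must confirm that the disk-counting for $\Q_2(\sqrt{-3})$ really gives a consistent premeasure on $g(\mathbb{P}^1(\Q_2))$ and that this premeasure matches the $\xi$-based definition of $\hat\mu$ after transport by $g$; both hinge on Lemma \ref{p=2unrami} and on the disk transformation rules of Proposition \ref{disk}.
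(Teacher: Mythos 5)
Your proposal is correct and follows essentially the same route as the paper: conjugate by $h$ so the fixed points become units at unit distance, pass to the multiplication model $\psi(x)=\lambda x$, build the measure $\widetilde\mu$ on $g(\mathbb{P}^{1}(\Q_{2}))$ from the disk counts of Lemma \ref{p=2unrami} (your mass $1/(3\cdot 2^{n-1})$ is exactly the paper's $(p+1)^{-1}p^{-n+1}$ at $p=2$), identify $\hat{\mu}$ with the transported measure, and invoke Theorem \ref{uniquelyergodic} for uniqueness. This is precisely the adaptation of the paper's detailed proof in the unramified $p\geq 3$ case that the authors leave implicit here.
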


Assume that $\Q_{2}(\sqrt{\Delta})=\Q_{2}(\sqrt{i})$ where $i=2,-2,6 \mbox{ or } -6$. It is easy to see that  $\CD(x_{1},2r_{0})\cap \Q_{2}= \overline{D}(\frac{a-d}{2c},\sqrt{2}\cdot r_{0})$. Let $\eta \in\Q_{2}$ be a point with $|\eta|_{2}=\sqrt{2}\cdot r_{0}$ and set $$h(x)=\eta x+ \frac{a-d}{2c}.$$
Then $h^{-1}(\overline{D}(\frac{a-d}{2c},\sqrt{2}\cdot r_{0}))=\overline{D}(0,1)$. According to the proof of Theorem \ref{decompositionsqrt2}, we can obtain the following theorem.
\begin{theorem}Under the  assumptions of Theorem \ref{decompositionsqrt2}, if  $(B,\phi)$ is a minimal subsystem of $(\mathbb{P}^{1}(\Q_{2}),\phi)$, the unique invariant measure $\overline{\sigma}$  of the system $(B,\phi)$ is defined as follow: for each measurable subset $A\subset B$,
$$\overline{\sigma}(A)=\frac{\overline{\mu}(h^{-1}(A))}{\overline{\mu}(h^{-1}(B))}.$$
\end{theorem}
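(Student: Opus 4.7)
The plan is to follow the template established in the proofs of the previous two invariant-measure theorems in this section (the $\hat{\sigma}$ theorems). First, since $\phi \in \mathrm{PGL}(2,\Q_{2})$ acts as a non-expansion on $\mathbb{P}^{1}(\Q_{2})$ with respect to the chordal metric, the family of iterates $\{\phi^{n}\}_{n\ge 1}$ is equicontinuous on $\mathbb{P}^{1}(\Q_{2})$; hence Theorem \ref{uniquelyergodic} applies to the minimal subsystem $(B,\phi)$, giving the existence and uniqueness of a $\phi$-invariant probability measure supported on $B$. It therefore suffices to exhibit \emph{some} invariant probability measure with support $B$ and to check that its formula coincides with the claimed expression for $\overline{\sigma}$.

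Second, I would set $\varphi = h^{-1}\circ\phi\circ h$ and reduce the claim to showing that $\overline{\mu}$ is $\varphi$-invariant on the relevant piece of $\mathbb{P}^{1}(\Q_{2})$. Because $(\mathbb{P}^{1}(\Q_{2}),\phi)$ is conjugate via $g(x)=(x-x_{2})/(x-x_{1})$ to the multiplication $\psi(x)=\lambda x$ with $|\lambda|_{2}=1$ acting on $g(\mathbb{P}^{1}(\Q_{2}))\subset \mathbb{S}(0,1)$, I would define a Borel measure $\widetilde{\mu}$ on $g(\mathbb{P}^{1}(\Q_{2}))$ by assigning to every admissible disk $\widetilde{D}(a,r)\subset g(\mathbb{P}^{1}(\Q_{2}))$ a weight dictated by the disk combinatorics of Lemma \ref{ramisqrt2} (counting how many $K$-disks of a given radius meet $g(\mathbb{P}^{1}(\Q_{2}))$ at each dyadic scale, including the odd/even-radius alternation of the ramified extension). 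Since $\psi$ permutes each finite collection of disks of fixed radius inside $\mathbb{S}(0,1)$, the resulting measure $\widetilde{\mu}$ is $\psi$-invariant; pulling back, $\widetilde{\mu}\circ g$ is $\phi$-invariant.

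Third, I would verify the identity $\overline{\mu}\circ h^{-1} = \widetilde{\mu}\circ g$ by checking it on a generating family of clopen sets. The verification splits into two pieces matching the two parts of the definition of $\overline{\mu}$: on $S_{1}=\Z_{2}\cap S$ one uses that $h^{-1}$ sends $\overline{D}((a-d)/2c,\sqrt{2}\,r_{0})$ to $\overline{D}(0,1)$, so $\mu_{0}$ becomes the normalized counting of the $h^{-1}$-images of the relevant disks inside $\Z_{2}$; on $S_{2}=(\mathbb{P}^{1}(\Q_{2})\setminus\Z_{2})\cap S$ one applies the inversion $\xi(z)=1/z$ to reduce to the same counting, which explains the asymmetric weights $\tfrac{1}{2}$ and $\tfrac{p}{2}=1$ in the definition of $\overline{\mu}$ (contrasting with $\tfrac{p}{p+1}$ in the unramified case, where both sides contribute equally). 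Once invariance is established, the restriction of $\overline{\mu}\circ h^{-1}$ to $h^{-1}(B)$, renormalized by $\overline{\mu}(h^{-1}(B))$, is the unique $\varphi$-invariant probability measure with support $h^{-1}(B)$; pushing forward by $h$ gives exactly $\overline{\sigma}$ as stated.

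The main obstacle will be Step 3: one must check that the specific constants $\tfrac{1}{2}$ and $\tfrac{p}{2}$ appearing in $\overline{\mu}$ correspond precisely to the disk counts produced by Lemma \ref{ramisqrt2} on both the $\Z_{2}$-side and the $\mathbb{P}^{1}(\Q_{2})\setminus\Z_{2}$-side, and that the alternation of odd/even radii characteristic of the ramified quadratic extensions $\Q_{2}(\sqrt{\pm 2}),\Q_{2}(\sqrt{\pm 6})$ threads consistently through all scales. This is the asymmetric analogue of the symmetric calculation carried out for $\hat{\mu}$ in the $\Q_{2}(\sqrt{-3})$ case, and while the strategy is identical, the bookkeeping is genuinely different because the fixed points $x_{1},x_{2}$ now lie at distance $2r_{0}$ from $\Q_{2}$ rather than $r_{0}$ (Lemma \ref{distancep=2Lemma}), which is precisely what forces the change from $\overline{D}((a-d)/2c,r_{0})$ to $\overline{D}((a-d)/2c,\sqrt{2}\,r_{0})$ in the definition of $h$.
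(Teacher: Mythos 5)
Your proposal follows the same approach as the paper: the paper itself proves this theorem only by reference ("According to the proof of Theorem \ref{decompositionsqrt2}"), relying on exactly the template you describe from Theorem \ref{measure1} --- equicontinuity plus Theorem \ref{uniquelyergodic} for unique ergodicity, conjugation by $g$ to $\psi(x)=\lambda x$, construction of the tree measure $\widetilde{\mu}$ from the disk counts in Lemma \ref{ramisqrt2}, and transport of that measure back through $g$ and the normalization $h$. One small caution on intuition: your phrase "both sides contribute equally" for $\hat{\mu}$ versus "asymmetric weights" for $\overline{\mu}$ is about the \emph{coefficients} in the formula, but the net masses go the other way --- $\hat{\mu}$ assigns unequal masses $p/(p+1)$ and $1/(p+1)$ to $\Z_p$ and its complement, while $\overline{\mu}$ assigns $1/2$ to each, which is precisely what the $1{:}1$ disk count of Lemma \ref{ramisqrt2} at the top scale forces.
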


Assume that $\Q_{2}(\sqrt{\Delta})=\Q_{2}(\sqrt{i})$ where $i=-1 \mbox{ or } 3$ .  Let $\pi$ be a uniformizer of $\Q_{2}(\sqrt{\Delta})$.
It is easy to see that  $\CD(x_{1},\sqrt{2}\cdot r_{0})\cap \Q_{2}= \overline{D}(\frac{a-d- p^{v_{\pi}(\sqrt{\Delta})}}{2c},r_{0})$.
 Let $\eta \in\Q_{2}$ be a point with $|\eta|_{2}= r_{0}$ and set $$h(x)=\eta x+\frac{a-d- p^{v_{\pi}(\sqrt{\Delta})}}{2c}.$$
Then $h^{-1}(\overline{D}(\frac{a-d- p^{v_{\pi}(\sqrt{\Delta})}}{2c}, r_{0}))=\overline{D}(0,1)$. According to the proof of Theorem \ref{decompositionsqrt3}, we have the following theorem.
\begin{theorem}Under the assumptions of Theorem \ref{decompositionsqrt3}, if  $(B,\phi)$ is a minimal subsystem of $(\mathbb{P}^{1}(\Q_{2}),\phi)$, the unique invariant measure  $\overline{\sigma}$  of the system $(B,\phi)$ is defined as follow: for each measurable subset $A\subset B$,
$$\overline{\sigma}(A)=\frac{\overline{\mu}\circ h^{-1}(A)}{\overline{\mu}\circ h^{-1}(B)}.$$
\end{theorem}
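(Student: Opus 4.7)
The plan is to mirror the template used for the earlier invariant measure theorems (for instance, the unramified Theorem \ref{measure1}), invoking Oxtoby's theorem to reduce uniqueness to the existence of one invariant probability measure supported on $B$, and then producing that measure by transport through the conjugacy $h$.

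First I would set $\varphi := h^{-1} \circ \phi \circ h \in {\rm{PGL}}(2,\mathbb{Q}_2)$ and analyse its fixed points $y_i := h^{-1}(x_i)$. With $\eta \in \mathbb{Q}_2$ of absolute value $r_0$ and the translation by $\frac{a-d-p^{v_\pi(\sqrt{\Delta})}}{2c}$, a direct computation using Lemma \ref{distancep=2Lemma} and Proposition \ref{x-sqrt} should show that $|y_1|_2 = |y_2|_2 = |y_1-y_2|_2 = 1$. This normalisation is what makes the two fixed balls $\overline{\mathbb{D}}(y_1,r_0^{-1}\cdot\sqrt{2}r_0) \cap \mathbb{P}^1(\mathbb{Q}_2)$ and its symmetric partner land in $\overline{D}(0,1)$ and $\mathbb{P}^1(\mathbb{Q}_2)\setminus\overline{D}(0,1)$ respectively, which is exactly the symmetry reflected in the two-term definition of $\overline{\mu}$.

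Next I would verify that $\overline{\mu}$ is $\varphi$-invariant. Because $\phi$ is equicontinuous (being conjugate via $g\in{\rm PGL}(2,\mathbb{Q}_p)$ to $x\mapsto\lambda x$ with $|\lambda|_p=1$), so is $\varphi$. On the space of disks in the nested decomposition provided by Theorem \ref{decompositionsqrt3}, the map $\varphi$ cyclically permutes the pieces at each level while preserving their radii (since on $\mathbb{P}^1(K)$ it is conjugate to multiplication by $\lambda$, a unit). Using Lemma \ref{numbersqrt3or-1} to count the $\mathbb{P}^1(\mathbb{Q}_2)$-disks inside each $\mathbb{P}^1(K)$-disk, one checks that each such disk receives the $\overline{\mu}$-mass dictated by its radius and its $\Z_p$ versus $\mathbb{P}^1(\mathbb{Q}_2)\setminus\Z_p$ location; the prefactors $\tfrac12$ and $\tfrac{p}{2}$ in the definition of $\overline{\mu}$ are precisely what is needed to equalise the two halves given the ramified branching ratios of Lemma \ref{rami-number}. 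The equivalence of minimality and unique ergodicity for equicontinuous systems (Theorem \ref{uniquelyergodic}) then gives that $\overline{\mu}\!\restriction_{h^{-1}(B)}$, once normalised, is the unique $\varphi$-invariant probability measure on $h^{-1}(B)$.

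Transporting back through $h$, the measure $A \mapsto \overline{\mu}\circ h^{-1}(A)/\overline{\mu}\circ h^{-1}(B)$ is $\phi$-invariant on $B$ and is a probability measure there. Applying Theorem \ref{uniquelyergodic} once more to $(B,\phi)$ establishes uniqueness and identifies this measure with $\overline{\sigma}$.

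The main obstacle is the $\varphi$-invariance verification in the three subcases of Theorem \ref{decompositionsqrt3}, which have different minimal cycle types $(4,e)$ at level $v_\pi(\lambda^2+1)+2$ versus the $(2,2)$- or $(1,2)$-types in subcases (2), (3). In each case one must confirm that the ball-counting measure transported from the natural Haar-type measure on $h^{-1}(B)$ matches the ramified weighting $\tfrac12\mu_0 + \tfrac{p}{2}\mu_0\circ\xi^{-1}$; the asymmetric factor of $p$ between the two halves (versus the factor $p/(p+1)$ in $\hat{\mu}$) comes from the fact that in the ramified setting each $\mathbb{P}^1(K)$-disk intersecting $\mathbb{Q}_2$ contains a single smaller disk hitting $\mathbb{Q}_2$ at odd levels, as recorded by Lemma \ref{rami-number}. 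Once this bookkeeping is done, invariance follows routinely.
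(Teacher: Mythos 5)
Your proposal follows essentially the route the paper intends: the paper offers no separate proof of this theorem, deferring to the template of Theorem \ref{measure1} (show the candidate measure is invariant by transporting a $\psi$-invariant ball-counting measure on $g(\mathbb{P}^{1}(\Q_2))$ back through $g$ and $h$, then apply Theorem \ref{uniquelyergodic} on each minimal component), and that is exactly what you outline, with the correct bookkeeping via Lemma \ref{numbersqrt3or-1}. One concrete correction, though: the normalization $|y_1|_2=|y_2|_2=|y_1-y_2|_2=1$ that you propose to verify is specific to the unramified case and is false here. With $|\eta|_2=r_0$ one does get $|y_1-y_2|_2=1$, but since $d(x_1,\Q_{2})=\sqrt{2}\,r_0$ by Lemma \ref{distancep=2Lemma} (3) and the translation center of $h$ lies in $\Q_2$, one computes $|x_1-c_0|_2=\sqrt{2}\,r_0$ and hence $|y_1|_2=|y_2|_2=\sqrt{2}$; the correct normalization is that $\overline{\mathbb{D}}(y_1,\sqrt{2})\cap\Q_2=\mathbb{Z}_2$. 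This slip is not fatal --- the invariance of $\overline{\mu}$ really rests on the fact that $\psi$ is multiplication by a unit together with the ramified branching counts, as you say --- but the identification of the transported counting measure with $\overline{\mu}$ (the step that produces the weights $\tfrac12$ and $\tfrac{p}{2}$) must be carried out with the fixed points sitting at distance $\sqrt{2}$ from $\Q_2$ rather than on its unit sphere, so you should redo that computation with the corrected values.
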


\begin{example}
Let $p=2$ and consider the dynamical system $(\mathbb{P}^{1}(\Q_{2}),\phi)$ where $$\phi(x)=\frac{1}{x+1}.$$
\end{example}
Consider the equation $\phi(x)=x$, whose discriminant is
$\Delta=5$. By Lemma \ref{solution}, we have $\sqrt{5}\notin \Q_{2}$. Notice that $\sqrt{\frac{-3}{5}}\in \Q_{2}$. This implies that $\Q_{2}(\sqrt{5})=\Q_{2}(\sqrt{-3})$. Similar to Example \ref{example1}, $\phi$ has two fixed points $x_{1}=\frac{-1+\sqrt{5}}{2}$ and $x_{2}=\frac{-1-\sqrt{5}}{2}$ in  $K=\Q_{2}(\sqrt{5})$ which is an unramified quadratic extension of $\Q_{2}$.  So we have
$$\lambda=\frac{1+\sqrt{5}}{1-\sqrt{5}}=-\frac{3+\sqrt{5}}{2}$$
and $\lambda\not\equiv1 (\!\!\!\mod 2)$.
Thus $$\lambda^{3}=-9-4\sqrt{5}=1 (\!\!\!\!\mod 2),$$
$$\lambda^{6}=161+72\sqrt{5}, \quad  v_{2}(\lambda^{6}-1)=3.$$
Theorem \ref{decomp-p=2-unrami} now shows that $(\mathbb{P}^{1}(\Q_{2}),\phi)$ is decomposed into $2$ minimal subsystems which are conjugate to the adding machine on the odometer $\mathbb{Z}_{(p_{s})}$ with
$$(p_{s})=(3,3\cdot2,3\cdot2^{2},\cdots).$$
It is easy to check that
 $B_1=\overline{D}(0,1/8)\cup \overline{D}(1,1/8)\cup\overline{D}(1/2,1)\cup\overline{D}(2/3,1/8)\cup\overline{D}(3/5,1/8)\cup(\P\setminus \overline{D}(3/5,4))$
and $B_{i}=\overline{D}(2,1/8)\cup \overline{D}(1/3,1/8)\cup\overline{D}(3/4,2)\cup\overline{D}(4/7,1/8)\cup\overline{D}(1/11,1/8)\cup\overline{D}(18/11,1)$
are the two minimal components.

Notice that   $r_{0}=|x_{1}-x_{2}|_{3}=|\sqrt{5}|_{2}=1$ and $\CD(x_1,1)\cap\Q_{2} =\overline{D}(0,1)$. For $i=1 \mbox{~or~} 2$,
the invariant measure for $(B_i,\phi)$ is $\hat\mu|_{B_{i}}$, the restriction of $\hat{\mu}$ on $B_i$.

\section*{Acknowledgement}
The authors gratefully acknowledge the support of MCM, CAS, especially the support to the Program of Stochastics, Dimension and Dynamics.
The CNRS program (PICS No.5727) and  NSF of China (Grant No. 10831008 and 11231009) are also acknowledged.
\section*{Notation}
$$\begin{array}{ll}
    p & \hbox{a prime number;} \\
    \Zp & \hbox{ring of $p$-adic intgers;} \\
    \Qp & \hbox{field of $p$-adic numbers;}\\
     D(a,r)& =\{x\in \Qp:|x-a|< r\};\\
     \overline{D}(a,r)& =\{x\in \Qp:|x-a|\leq r\};\\
     U& =\{x\in \Qp: |x|_p=1\};\\
     V&=\{x\in \Qp: \exists~ n\in \N^{+},  x^{n}=1 \};\\
%the group of the roots of unity
     K& \hbox{a finite extension of $\Qp$;} \\
     \mathcal{O}_{K}&=\{x\in K : |x|_p\leq 1\};\\
     \pi & \hbox{a  uniformizer of $K$;}\\
     e & \hbox{the ramification index of $K$ over $\Qp$;}\\
     |\cdot|_{p} & \hbox{the abosulute value on $\Qp$ or $K$;}\\
     \mathbb{D}(a,r)& =\{x\in K:|x-a|< r\};\\
     \overline{\mathbb{D}}(a,r)& =\{x\in K:|x-a|\leq r\};\\
          \mathbb{U}&=\{x\in\mathcal{O}_{K}:|x|_{p}=1\} ;\\
     %be the group ofunits in $\mathcal{O}_{K}$ and
     \mathbb{V}&=\{x\in
\mathbb{U}:~\exists~ m\in \mathbb{N}^*, x^{m}=1\};\\
% be the set of roots of the unit in $\mathcal{O}_{K}$.
 %$\mathbb{S}(a,r)$ is the sphere of radius $r$ centered at $a$:
 \mathbb{S}(a,r)&=\{x\in K : |x-a|_{p}= r\};\\
\end{array}$$


\begin{thebibliography}{10}
 \bibitem{ARS13}
 S. Albeverio, U. A. Rozikov and I. A. Sattarov,
 \newblock  p-adic (2,1)-rational dynamical systems,
 \newblock {\em J. Math. Anal. Appl.}, 398 (2): 553--566, 2013.

\bibitem{Anashin94}  V.~S. Anashin, Uniformly distributed sequences of $p$-adic
integers, (Russian) \emph{Mat. Zametki} {55} (1994), no. 2, 3--46,
188; translation in \emph{Math. Notes }{55} (1994), no. 1-2,
109--133

\bibitem{Anashin98}  V.~S. Anashin, Uniformly distributed sequences in computer algebra or how to
construct program generators of random numbers, \emph{Computing
mathematics and cybernetics,} 2. \emph{J. Math. Sci.} (New York) {89}
(1998), no. 4, 1355--1390.
\bibitem{Anashin-Uniformly-distributed02}
V.~S. Anashin, Uniformly distributed sequences of $p$-adic
integers, (Russian)  \emph{Diskret. Mat}.  {14} (2002), no. 4,
3-64; translation in  \emph{Discrete Math. Appl.}  {12}  (2002), no.
6, 527-590

\bibitem{Anashin06}
V.~S. Anashin, Ergodic transformations in the space of $p$-adic
integers,\emph{ $p$-adic mathematical physics},  3-24, \emph{AIP Conf. Proc.},
{826}, \emph{Amer. Inst. Phys}., Melville, NY, 2006.

\bibitem{Anashin-Khrennikov-AAD}
 V.~S. Anashin and A. Khrennikov,
\newblock {\em Applied Algebraic Dynamics,} de  {\em Gruyter Expositions in Mathematics}.
{49}. Walter de Gruyter \& Co., Berlin, 2009.

\bibitem{Baker-Rumely-book}
M. Baker and R. Rumely,
\newblock {\em Potential theory and dynamics on the {B}erkovich projective
  line}, volume 159 of {\em Mathematical Surveys and Monographs}.
\newblock American Mathematical Society, Providence, RI, 2010.

\bibitem{Benedetto-Hyperbolic-maps}
R.~L. Benedetto,
\newblock Hyperbolic maps in {$p$}-adic dynamics,
\newblock {\em Ergodic Theory Dynam. Systems}, 21 (1):1--11, 2001.

\bibitem{Benedetto-Components-periodic-points}
R.~L. Benedetto,
\newblock Components and periodic points in non-{A}rchimedean dynamics,
\newblock {\em Proc. London Math. Soc. (3)}, 84 (1):231--256, 2002.

\bibitem{Benedetto-wandering-domain-polynomial}
R.~L. Benedetto,
\newblock Wandering domains in non-{A}rchimedean polynomial dynamics,
\newblock {\em Bull. London Math. Soc.}, 38 (6):937--950, 2006.

\bibitem{CFF09}
J. Chabert, A. H. Fan and Y. Fares,
\newblock Minimal dynamical systems on a discrete valuation domain,
\newblock {\em Discrete Contin. Dyn. Syst.}, 25 (3):777--795, 2009.

\bibitem{CP11Ergodic}
Z. Coelho and W. Parry,
\newblock Ergodicity of {$p$}-adic multiplications and the distribution of
  {F}ibonacci numbers,
\newblock In {\em Topology, ergodic theory, real algebraic geometry}, volume
  202 of {\em Amer. Math. Soc. Transl. Ser. 2}, pages 51--70. Amer. Math. Soc.,
  Providence, RI, 2001.
\bibitem{DKM07}
B. Dragovich, A. Khrennikov, and D. Mihajlovi{\'c},
\newblock Linear fractional {$p$}-adic and adelic dynamical systems,
\newblock {\em Rep. Math. Phys.}, 60 (1): 55--68,2007.

\bibitem{DZunpu}
D.~L.~Desjardins and M.~E.~Zieve ,
\newblock Polynomial mappings mod $p^n$,
\newblock preprint,  arXiv:math/0103046v1.

\bibitem{Diao-Silva}
H. Diao and C. Silva,
\newblock Digraph representations of rational functions over the p-adic
  numbers,
\newblock preprint, 2009.

\bibitem{Fan-Fares11}
A. H.  Fan and Y. Fares,
\newblock Minimal subsystems of affine dynamics on local fields,
\newblock {\em Arch. Math. (Basel)}, 96 (5):423--434, 2011.

\bibitem{FLYZ07}
A. H. Fan, M. T. Li, J. Y. Yao, and D. Zhou,
\newblock Strict ergodicity of affine {$p$}-adic dynamical systems on {$\Bbb
  Z_p$},
\newblock {\em Adv. Math.}, 214 (2):666--700, 2007.

\bibitem{FanLiao11}
A. H. Fan and L. M. Liao,
\newblock On minimal decomposition of p-adic polynomial dynamical systems,
\newblock {\em Adv. Math.}, 228:2116--2144, 2011.

\bibitem{Liao}
S. L. Fan and L. M. Liao,
\newblock Dynamics of  convergent power series with integral coefficients on a finite extension of the field of $p$-adic numbers,
\newblock preprint.


\bibitem{Hsia-periodic-points-closure}
L. C. Hsia,
\newblock Closure of periodic points over a non-{A}rchimedean field,
\newblock {\em J. London Math. Soc. (2)}, 62 (3):685--700, 2000.

\bibitem{KhrennikovNilson04book}
A. Khrennikov and M. Nilson,
\newblock {\em {$p$}-adic deterministic and random dynamics}, volume 574 of
  {\em Mathematics and its Applications}.
\newblock Kluwer Academic Publishers, Dordrecht, 2004.

\bibitem{KM06}
 M. Khamraev and F. M. Mukhamedov,
 \newblock On a class of rational p-adic dynamical systems,
 \newblock {\em J. Math. Anal. Appl.}, 315 (1):76--89, 2006.

%\bibitem{Liao}
%L. Liao,
%\newblock Minimality of affine polynomials on a finite extension of the field
%  of $p$-adic numbers,
%\newblock preprint.


 \bibitem{MR04}
\newblock F. M. Mukhamedov and  U. A. Rozikov, On rational p-adic dynamical systems,
 \newblock {\em Methods Funct. Anal. Topology}, 10 (2):21--31, 2004.

\bibitem{Mahler81}
K. Mahler,
\newblock {\em {$p$}-adic numbers and their functions}, volume~76 of {\em
  Cambridge Tracts in Mathematics}.
\newblock Cambridge University Press, Cambridge, second edition, 1981.

\bibitem{Oxtoby}
J. Oxtoby,
\newblock {Ergodic set},
\newblock {\em Bull. Amer. Math. Soc.} 58 (1952), 116-136

\bibitem{OZ75}
R. Oselies and H. Zieschang,
\newblock Ergodische {E}igenschaften der {A}utomorphismen {$p$}-adischer
  {Z}ahlen,
\newblock {\em Arch. Math. (Basel)}, 26:144--153, 1975.

\bibitem{Rivera-Letelier05}
J. Rivera-Letelier,
\newblock Points p¨¦riodiques des fonctions rationnelles dans l'espace hyperbolique p-adique. (French. English summary) [Periodic points of rational functions in p-adic hyperbolic space] ,
\newblock {Comment. Math. Helv. }, 80 (3): 593--629.(2005)


\bibitem{Rivera-Letelier-Dynamique-rationnelles-corps-locaux}
J. Rivera-Letelier,
\newblock Dynamique des fonctions rationnelles sur des corps locaux,
\newblock {\em Ast\'erisque}, (287):xv, 147--230, 2003.
\newblock Geometric methods in dynamics. II.

\bibitem{Rivera-Letelier-Espace-hyperbolique}
J. Rivera-Letelier,
\newblock Espace hyperbolique {$p$}-adique et dynamique des fonctions
  rationnelles,
\newblock {\em Compositio Math.}, 138 (2):199--231, 2003.

\bibitem{sch} W.~H. Schikhof, \emph{Ultrametric calculus, An introduction to $p$-adic analysis},
 Cambridge Studies in Advanced Mathematics, {4}, Cambridge University Press, Cambridge, 1984.

\bibitem{Silverman-dynamics-book}
J.~H. Silverman,
\newblock {\em The arithmetic of dynamical systems}, volume 241 of {\em
  Graduate Texts in Mathematics}.
\newblock Springer, New York, 2007.

\end{thebibliography}
\end{document}